\title[]{Nonlinear Hodge correspondence in positive characteristic}
\author[Mao Sheng]{Mao Sheng}
\begin{document}
\theoremstyle{plain}
\newtheorem{thm}{Theorem}[section]
\newtheorem{theorem}[thm]{Theorem}
\newtheorem{lemma}[thm]{Lemma}
\newtheorem{corollary}[thm]{Corollary}
\newtheorem{proposition}[thm]{Proposition}
\newtheorem{addendum}[thm]{Addendum}
\newtheorem{variant}[thm]{Variant}
\theoremstyle{definition}
\newtheorem{lemma and definition}[thm]{Lemma and Definition}
\newtheorem{construction}[thm]{Construction}
\newtheorem{statement}[thm]{Statement}
\newtheorem{notations}[thm]{Notations}
\newtheorem{question}[thm]{Question}
\newtheorem{problem}[thm]{Problem}
\newtheorem{remark}[thm]{Remark}
\newtheorem{remarks}[thm]{Remarks}
\newtheorem{definition}[thm]{Definition}
\newtheorem{claim}[thm]{Claim}
\newtheorem{assumption}[thm]{Assumption}
\newtheorem{assumptions}[thm]{Assumptions}
\newtheorem{properties}[thm]{Properties}
\newtheorem{example}[thm]{Example}
\newtheorem{conjecture}[thm]{Conjecture}
\newtheorem{proposition and definition}[thm]{Proposition and Definition}
\numberwithin{equation}{thm}
\newcommand{\Spec}{\mathrm{Spec}}
\newcommand{\pP}{{\mathfrak p}}
\newcommand{\sA}{{\mathcal A}}
\newcommand{\sB}{{\mathcal B}}
\newcommand{\sC}{{\mathcal C}}
\newcommand{\sD}{{\mathcal D}}
\newcommand{\sE}{{\mathcal E}}
\newcommand{\sF}{{\mathcal F}}
\newcommand{\sG}{{\mathcal G}}
\newcommand{\sH}{{\mathcal H}}
\newcommand{\sI}{{\mathcal I}}
\newcommand{\sJ}{{\mathcal J}}
\newcommand{\sK}{{\mathcal K}}
\newcommand{\sL}{{\mathcal L}}
\newcommand{\sM}{{\mathcal M}}
\newcommand{\sN}{{\mathcal N}}
\newcommand{\sO}{{\mathcal O}}
\newcommand{\sP}{{\mathcal P}}
\newcommand{\sQ}{{\mathcal Q}}
\newcommand{\sR}{{\mathcal R}}
\newcommand{\sS}{{\mathcal S}}
\newcommand{\sT}{{\mathcal T}}
\newcommand{\sU}{{\mathcal U}}
\newcommand{\sV}{{\mathcal V}}
\newcommand{\sW}{{\mathcal W}}
\newcommand{\sX}{{\mathcal X}}
\newcommand{\sY}{{\mathcal Y}}
\newcommand{\sZ}{{\mathcal Z}}
\newcommand{\A}{{\mathbb A}}
\newcommand{\B}{{\mathbb B}}
\newcommand{\C}{{\mathbb C}}
\newcommand{\D}{{\mathbb D}}
\newcommand{\E}{{\mathbb E}}
\newcommand{\F}{{\mathbb F}}
\newcommand{\G}{{\mathbb G}}
\renewcommand{\H}{{\mathbb H}}
\newcommand{\I}{{\mathbb I}}
\newcommand{\J}{{\mathbb J}}
\renewcommand{\L}{{\mathbb L}}
\newcommand{\M}{{\mathbb M}}
\newcommand{\N}{{\mathbb N}}
\renewcommand{\P}{{\mathbb P}}
\newcommand{\Q}{{\mathbb Q}}
\newcommand{\Qbar}{\overline{\Q}}
\newcommand{\R}{{\mathbb R}}
\newcommand{\SSS}{{\mathbb S}}
\newcommand{\T}{{\mathbb T}}
\newcommand{\U}{{\mathbb U}}
\newcommand{\V}{{\mathbb V}}
\newcommand{\W}{{\mathbb W}}
\newcommand{\Z}{{\mathbb Z}}
\newcommand{\g}{{\gamma}}
\newcommand{\id}{{\rm id}}
\newcommand{\rk}{{\rm rank}}
\newcommand{\END}{{\mathbb E}{\rm nd}}
\newcommand{\End}{{\rm End}}
\newcommand{\Hom}{{\rm Hom}}
\newcommand{\Hg}{{\rm Hg}}
\newcommand{\tr}{{\rm tr}}
\newcommand{\Sl}{{\rm Sl}}
\newcommand{\GL}{{\rm Gl}}
\newcommand{\Cor}{{\rm Cor}}

\newcommand{\SO}{{\rm SO}}
\newcommand{\OO}{{\rm O}}
\newcommand{\SP}{{\rm SP}}
\newcommand{\Sp}{{\rm Sp}}
\newcommand{\UU}{{\rm U}}
\newcommand{\SU}{{\rm SU}}
\newcommand{\SL}{{\rm SL}}
\newcommand{\ra}{\rightarrow}
\newcommand{\la}{\leftarrow}
\newcommand{\Gal}{\mathrm{Gal}}
\newcommand{\Res}{\mathrm{Res}}
\newcommand{\Gl}{\mathrm{Gl}}
\newcommand{\Gr}{\mathrm{Gr}}
\newcommand{\Exp}{\mathrm{Exp}}
\newcommand{\Sym}{\mathrm{Sym}}
\newcommand{\Ann}{\mathrm{Ann}}
\newcommand{\GSp}{\mathrm{GSp}}
\newcommand{\Tr}{\mathrm{Tr}}
\newcommand{\HIG}{\mathrm{HIG}}
\newcommand{\MIC}{\mathrm{MIC}}
\newcommand{\NHIG}{\mathrm{NHIG}}
\newcommand{\NMIC}{\mathrm{NMIC}}
\newcommand{\FV}{\mathrm{FV}}
\newcommand{\HV}{\mathrm{HV}}
\newcommand{\Ext}{\mathrm{Ext}}
\newcommand{\bA}{\mathbf{A}}
\newcommand{\bK}{\mathbf{K}}
\newcommand{\bM}{\mathbf{M}} 
\newcommand{\bP}{\mathbf{P}}
\newcommand{\bC}{\mathbf{C}}
\newcommand{\NMF}{\mathrm{NMF}}
\newcommand{\sFV}{\mathrm{SFV}}
\newcommand{\sHV}{\mathrm{SHV}}
\newcommand{\Aut}{{\rm Aut}}
\newcommand{\Gm}{{\mathbb{G}_m}}

\begin{abstract}
In this article, we extend the nonabelian Hodge correspondence in positive characteristic to the nonlinear setting.  
\end{abstract}

\maketitle

\section{Introduction}
In our recent work \cite{FS}, we introduced the notion of a \emph{nonlinear Higgs bundle} (\cite[Definition 1.3]{FS}), which is abstracted from the properties of nonabelian Kodaira-Spencer maps. On the other hand, one should also be able to abstract the properties of nonabelian Gauss-Manin connections, and make a category of nonlinear flat bundles (see below). Surprisingly, we may equalize these two categories in positive characteristic, subject to suitable conditions. The equivalence we are going to present is intimately related to the nonabelian Hodge correspondence established by Ogus-Vologodsky \cite{OV}, in their fundamental work in nonabelian Hodge theory in positive characteristic. Because of that, it seems appropriate to call this equivalence a \emph{nonlinear Hodge correspondence} in positive characteristic. \\

Let us first introduce several notions. Let $T$ be a scheme, and let $f: X\to S$ to be a $T$-morphism. 
\begin{definition}\label{lambda connection}
For a $\lambda\in \Gamma(T,\sO_T)$, a $\lambda$-connection on $f$ is an $\sO_{X}$-linear morphism
$$
\nabla: f^*T_{S/T}\to T_{X/T}
$$
such that its composite with the natural morphism $\pi: T_{X/T}\to f^*T_{S/T}$ is equal to $\lambda\cdot id$. 
\end{definition}
Note that the composite map 
$$
\wedge^2 f^*T_{S/T}\stackrel{[\nabla,\nabla]}{\longrightarrow} T_{X/T}\to T_{X/T}/\lambda\cdot \textrm{im}(\nabla),\  a\wedge b\mapsto \overline{[\nabla(a),\nabla(b)]}
$$
is $\sO_X$-linear when $\lambda$ is a unit. Moreover, $\nabla$ induces a decomposition 
$$T_{X/T}=T_{X/S}\oplus \nabla(f^*T_{S/T}),$$ and hence a natural isomorphism 
$$T_{X/T}/\lambda\cdot \textrm{im}(\nabla)\cong T_{X/S}.$$ 
In this case, we shall call either $f^*\wedge^2T_{S/T}\to T_{X/S}$ or its adjoint $\wedge^2T_{S/T}\to f_*T_{X/S}$ the \emph{curvature} of $\nabla$. A $\lambda$-connection $\nabla$, where $\lambda$ is a unit, is said to be integrable, if its curvature is zero. Another interesting case is $\lambda=0$. As $\ker(\pi)=T_{X/S}$, we may write a 0-connection as
$$
\theta: f^*T_{S/T}\to T_{X/S}.
$$
We call it integrable, if the composite, which is $\sO_S$-linear and shall be called the curvature of $\theta$,
$$
\wedge^2T_{S/T}\stackrel{[\theta,\theta]}{\longrightarrow} f_*T_{X/S}, \ a\wedge b\mapsto [\theta(a),\theta(b)]
$$
is zero. \\

In this article, we consider only the situation where $S\to T$ is smooth. Under this assumption, the notion of $\lambda$-connection behaves well under base change: Let $T'\to T$ be a morphism of schemes. Let $f_{T'}: X_{T'}\to S_{T'}$ be the base change of $f$ via $T'\to T$, and let $\lambda'\in \Gamma(T',\sO_{T'})$ be the image of $\lambda$. Then the composite
$$
f_{T'}^*T_{S_{T'}/T'}=f^*T_{S/T}|_{X_{T'}}\stackrel{\nabla}{\to} T_{X/T}|_{X_{T'}}\to T_{X_{T'}/T'}
$$
is a $\lambda'$-connection on $f_{T'}$. Also, the theory of $\lambda$-connections is intimately related to the theory of foliations. Indeed, an integrable $1$-connection on $f$ is nothing but a \emph{transversal foliation} on $f$. Now let $k$ be a base ring. A $k$-variety is by definition an integral separated scheme of finite type over $k$. 
\begin{definition}\label{nonlinear flat bundle and Higgs bundle}
Let $T$ be the spectrum of $k$, and $S$ be a smooth $k$-variety. A transversal foliated $S$-variety is a pair $(f,\nabla)$, where $f: X\to S$ is a morphism of $k$-varieties and $\nabla$ is an integrable $1$-connection (or connection for short) on $f$. A Higgs $S$-variety is a pair $(f,\theta)$, where $f$ is as before and $\theta$ is an integrable $0$-connection (or Higgs field for short) on $f$. A nonlinear flat (resp. Higgs) bundle over $S$ is a transversal foliated (resp. Higgs) $S$-variety $(f,\nabla)$ (resp. $(f,\theta)$) with $f$ being smooth. For two pairs $(f: X\to S,\nabla_f)$ and $(g: Y\to S,\nabla_g)$ of $\lambda$-connections, we define a morphism $(f,\nabla_f)\to (g,\nabla_g)$ to be an $S$-morphism $h: X\to Y$ such that 
$h^*\nabla_g=h_*\circ \nabla_f$, where $h_*: T_{X/k}\to h^*T_{Y/k}$ is the tangent map of $h$. 
\end{definition}

\begin{example}\label{nonabelian Hodge moduli}
Assume $k=\C$. Let $X\to S$ be a smooth projective morphism, and $G$ a reductive algebraic group over $\C$. Let $f: M^{sm}_{dR}(X/S,G)\to S$ be the sublocus of the relative de Rham space parameterizing Zariski dense flat connections. Simpson (\cite[\S8]{Si94},\cite[\S7-8]{Si97}) constructed the nonabelian Gauss-Manin connection on $f$, which is a nonlinear flat bundle in above sense (\cite[Proposition 4.2]{FS}). On the other hand, one has also $g: M^{sm}_{Dol}(X/S,G)\to S$, the sublocus of the relative Dolbeault space parameterizing Zariski dense Higgs bundles. We showed that $g$, equipped with the nonabelian Kodaira-Spencer map, is a nonlinear Higgs bundle (\cite[Theorem 1.2]{FS}).  
\end{example}
For a fixed $k$-variety $S$, it is clear that transversal foliated $S$-varieties (resp. Higgs $S$-varieties) form a category, which shall be denoted by $\NMIC(S)$ (resp. $\NHIG(S)$). The category $\NMIC(S)$ (resp. (resp. $\NHIG(S)$)) contains the category of nonlinear flat (resp. Higgs) bundles over $S$ as full subcategory, which shall be denoted by $\NMIC^{s}(S)$ (resp. $\NHIG^s(S)$). In \S\ref{pseduo-smooth morphisms}, we show that the category $\MIC(S)$ (resp. $\HIG(S)$) of coherent $\sO_S$-modules with integrable connections (resp. coherent $\sO_S$-modules with Higgs fields) can be embedded into $\NMIC(S)$ (resp. $\NHIG(S)$). Objects in these subcategories are said to be \emph{linear}.\\

  In the following statements, we assume that $k$ is a perfect field of characteristic $p>0$. Let $\sigma: k\to k$ be the Frobenius automorphism of $k$. Our first result is a nonlinear analogue of the classical descent theorem.
\begin{theorem}[Theorem \ref{equivalence in the case of vanishing curvature}, Theorem \ref{thm:nonlinear Cartier descent in appendix}]\label{nonlinear Cartier descent}
Suppose $S$ is smooth over $k$. Then there is an explicit equivalence of categories between the category $\NMIC_0(S)$ of transversal foliated $S$-varieties with vanishing $p$-curvature and the category $\NHIG_0(S')$ of Higgs $S'$-varieties with zero Higgs field, which satisfies the following properties:
\begin{itemize}
    \item [(i)] The equivalence restricts to an equivalence between the full subcategories of linear objects is the classical Cartier correspondence; 
    \item [(ii)] It restricts to an equivalence between $\NMIC^s_{0}(S)$ and $\NHIG^s_{0}(S')$.
\end{itemize}
\end{theorem}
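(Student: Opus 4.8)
The plan is to construct the equivalence as an explicit \emph{descent along the relative Frobenius} $F=F_{S/k}\colon S\to S'$, using the theory of quotients by $p$-closed foliations as the geometric engine. The key formal input, used throughout, is that the relative Frobenius is killed by $d$: one has $dF=0$, hence $T_{S/S'}\cong T_{S/k}$ and (for any $S$-scheme $X$ mapping to $S'$ through $F$) $T_{X/S'}=T_{X/k}$. First I would build the functor $\Phi\colon\NMIC_0(S)\to\NHIG_0(S')$. Given $(f\colon X\to S,\nabla)$, set $D:=\nabla(f^{*}T_{S/k})\subseteq T_{X/k}$. Transversality makes $D$ a locally free direct summand of $T_{X/k}$, integrability of $\nabla$ (vanishing curvature) gives involutivity $[D,D]\subseteq D$, and vanishing $p$-curvature gives $p$-closedness $D^{[p]}\subseteq D$: indeed the $p$-curvature $\psi(\partial)=\nabla(\partial)^{[p]}-\nabla(\partial^{[p]})$ lands in $T_{X/S}$ and is exactly the obstruction to $D$ being stable under $p$-th powers. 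Thus $D$ is a transversal restricted foliation. Since $d(F\circ f)=dF\circ df=0$, the map $F\circ f\colon X\to S'$ is constant along the leaves of $D$ and factors as $X\xrightarrow{q}Y\xrightarrow{g}S'$, where $q\colon X\to Y:=X/D$ is the foliation quotient; I define $\Phi(f,\nabla):=(g\colon Y\to S',\,0)$.

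The technical heart is to show that $Y=X/D$ is again a $k$-variety, that $q$ is finite, flat and purely inseparable, and that the resulting square with bottom edge $F$ and top edge $q$ is Cartesian, i.e. $X\cong Y\times_{S'}S$ with $D=T_{X/Y}$. I would establish this by a local model: étale-locally on $S$ choose coordinates so that $f^{*}T_{S/k}$ has a frame, lift it through $\nabla$ to pairwise-commuting $p$-closed vector fields, and identify $\sO_Y=\ker(D)$ with the ring of $D$-invariant functions. The computation that the $p$-th powers of coordinate functions descend to $Y$ while the functions themselves do not yields $\sO_X\cong\sO_Y\otimes_{\sO_{S'}}\sO_S$ locally. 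Gluing these local models via the universal property of the foliation quotient (as the relative spectrum of invariants) produces the Cartesian square globally and exhibits $q$ as the base change of $F$ along $g$, hence finite flat purely inseparable of degree $p^{\dim S}$. This is the relative analogue of Ekedahl's correspondence between $p$-closed foliations and inseparable quotients.

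Next I would construct the inverse $\Psi\colon\NHIG_0(S')\to\NMIC_0(S)$ by $\Psi(g\colon Y\to S',0):=(\mathrm{pr}_S\colon Y\times_{S'}S\to S,\nabla_{can})$, where $\nabla_{can}$ is the tautological foliation $T_{(Y\times_{S'}S)/Y}$. That $\Psi$ lands in $\NMIC_0(S)$ is formal: since $dF=0$ one has $T_{X/k}=T_{X/S'}=T_{X/S}\oplus T_{X/Y}$ and $T_{X/Y}\cong\mathrm{pr}_S^{*}T_{S/S'}\cong f^{*}T_{S/k}$, so $T_{X/Y}$ defines a transversal integrable $1$-connection, and being a relative tangent sheaf it is automatically $p$-closed, i.e. its $p$-curvature vanishes. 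The Cartesian square of the previous step then furnishes natural isomorphisms $\Phi\Psi\cong\id$ and $\Psi\Phi\cong\id$. Functoriality in both directions is immediate from the universal properties involved: a foliated $S$-morphism $h\colon X_1\to X_2$ carries $D_1$ into $D_2$, hence descends to $Y_1\to Y_2$ over $S'$ (invariants are functorial), and conversely an $S'$-morphism pulls back to a morphism compatible with the canonical foliations (the fibre product is functorial).

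For property (ii), smoothness descends along the faithfully flat cover $S\to S'$: since $X=Y\times_{S'}S$ and $F$ is an fppf cover, $f$ is smooth if and only if $g$ is smooth, so $\Phi$ restricts to an equivalence $\NMIC^{s}_0(S)\cong\NHIG^{s}_0(S')$. For property (i), a linear object of $\NMIC_0(S)$ is the total space of a coherent sheaf $\sE$ equipped with the fibrewise-linear foliation induced by a flat connection of vanishing $p$-curvature, via the embedding of $\MIC(S)$ into $\NMIC(S)$ constructed earlier; here the foliation quotient is the linear $S'$-scheme attached to $\sE':=\ker\nabla$, the sheaf of flat sections, with $F^{*}\sE'\cong\sE$. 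This is exactly the output of classical Cartier descent, so $\Phi$ preserves linearity and agrees there with the Cartier correspondence. The main obstacle is the second step: proving that the quotient $Y=X/D$ exists as an integral separated $k$-variety with $q$ finite flat, and that $D=T_{X/Y}$ with the square Cartesian, \emph{for an arbitrary transversal foliated $S$-variety} with no smoothness hypothesis on $f$. For smooth $f$ this reduces to standard Frobenius base change, but controlling the quotient and the identification $D=T_{X/Y}$ over possibly singular total spaces is the genuinely delicate point and requires the full relative $p$-closed foliation descent developed above.
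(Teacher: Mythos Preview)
Your approach is correct and closely parallels the paper, which in fact gives two proofs: one for smooth $f$ via Ekedahl's correspondence between $p$-closed foliations and inseparable quotients (your framing), and one for arbitrary $f$ in the Appendix via a direct reduction to the \emph{linear} Cartier descent. Your construction of $\Psi$ (fibre product with its tautological foliation $T_{X/Y}$) is exactly the paper's, and your verification of property (ii) via fppf descent of smoothness along $F$ is the same.

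The one place where your exposition is thinner than the paper's is the ``technical heart'': you write that ``the computation that the $p$-th powers of coordinate functions descend to $Y$ while the functions themselves do not yields $\sO_X\cong\sO_Y\otimes_{\sO_{S'}}\sO_S$ locally.'' That observation only gives $\sO_{S'}\subset\sO_Y$ and $\sO_S\not\subset\sO_Y$; it does not by itself produce the isomorphism. What actually proves the Cartesian square, and what the Appendix isolates cleanly, is the remark that a transversal $1$-connection $\nabla$ on $f$ is \emph{literally} an integrable $k$-connection $\nabla_2$ on the quasi-coherent $\sO_S$-module $f_*\sO_X$ (rewrite $\nabla\colon f^*T_{S/k}\to\mathrm{Der}_k(\sO_X)$ as $\nabla_1\colon\sO_X\to f^*\Omega_{S/k}$ and push forward), with vanishing $p$-curvature. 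Classical Cartier descent then gives an $\sO_S$-module isomorphism $F^*\bigl((f_*\sO_X)^{\nabla_2}\bigr)\cong f_*\sO_X$, and one checks it is an algebra isomorphism; setting $Y:=\Spec_{S'}(f_*\sO_X)^{\nabla_2}$ yields $X\cong Y\times_{S'}S$ with $\nabla=\nabla_{can}$. This bypasses entirely the question of whether a ``foliation quotient'' $X/D$ exists or is well-behaved for singular $X$, which you correctly flag as the delicate point but do not fully resolve. In the smooth case the paper also gives a slicker degree argument: Ekedahl produces $X\to Y\to X'$ with $\deg(X\to Y)=p^{\dim S}$, and since the canonical map $X\to F_{S/k}^*Y$ also has degree $p^{\dim S}$ over $Y$, it must be an isomorphism.

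For property (i), your sketch is right but the paper carries out the explicit local comparison: for $\sE$ locally free on $S'$ with associated bundle $E\to S'$, one checks that on $V=F_{S/k}^*E$ the two candidate transversal foliations---the one coming from $\nabla_{can}$ on $F_{S/k}^*\sE$ via the linear-scheme functor, and the one given by $\mathrm{Ann}(\sO_E)\subset T_{V/k}$---are both locally spanned by $\{\partial_{s_i}\otimes 1\}$, hence coincide.
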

Next, we obtain a nonlinear analogue of the Ogus-Vologodsky correspondence when $S$ is local.

\begin{theorem}[Theorem \ref{local nonlinear Hodge correspondence}]\label{local nonlinear Hodge correspondence main result}
Assume additionally the pair $(S, F_{S/k})$ is $W_2(k)$-liftable. Set $S'=S\times_{k,\sigma}k$. Then a lifting of $(S, F_{S/k})$ induces an explicit equivalence of categories between the category $\NMIC_{nil}(S)$ of transversal foliated $S$-varieties with nilpotent $p$-curvature and the category $\NHIG_{nil}(S')$ of Higgs $S'$-varieties with nilpotent Higgs field, which satisfies the following properties:
\begin{itemize}
    \item [(i)] It coincides with the Ogus-Vologodsky correspondence over the full subcategories of linear objects;
    \item [(ii)] It restricts to an equivalence between $\NMIC^s_{nil}(S)$ and $\NHIG^s_{nil}(S')$.
\end{itemize}
\end{theorem}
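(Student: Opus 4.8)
The plan is to transplant the local inverse Cartier transform of Ogus--Vologodsky from modules to $S$-schemes. Since $S$ is local and $(S,F_{S/k})$ is $W_2(k)$-liftable, a chosen lift $\tilde F\colon \tilde S\to \tilde S'$ of the relative Frobenius $F:=F_{S/k}$ exists globally on $S$. As in Deligne--Illusie, $d\tilde F$ is divisible by $p$, so $\zeta:=\tfrac1p\,d\tilde F \bmod p$ is a well-defined isomorphism $F^*\Omega^1_{S'/k}\xrightarrow{\sim}\Omega^1_{S/k}$ (the inverse Cartier operator on one-forms), with dual $\zeta^\vee\colon T_{S/k}\xrightarrow{\sim} F^*T_{S'/k}$. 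This $\zeta^\vee$ is the one extra datum the lift contributes, and it is the bridge between Higgs fields on $S'$ and foliations on $S$.

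Next I would construct the functor $C^{-1}_{\tilde F}\colon \NHIG_{nil}(S')\to \NMIC_{nil}(S)$. Given a Higgs $S'$-variety $(g\colon Y\to S',\theta)$ with $\theta\colon g^*T_{S'/k}\to T_{Y/S'}$ nilpotent, set $X:=Y\times_{S',F}S$ with structure map $f\colon X\to S$ and first projection $q\colon X\to Y$; base change of smooth morphisms keeps $f$ smooth exactly when $g$ is, which yields property (ii) for free. Applying the nonlinear Cartier descent of Theorem~\ref{nonlinear Cartier descent} to $(Y,0)$ produces on this very $X$ a canonical integrable $1$-connection $\nabla_0$ with vanishing $p$-curvature, the nonlinear analogue of the canonical connection on a Frobenius pullback. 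I then define
\[
\nabla\ :=\ \nabla_0\ +\ \iota\circ(q^*\theta)\circ(f^*\zeta^\vee),
\]
where $f^*\zeta^\vee\colon f^*T_{S/k}\xrightarrow{\sim} f^*F^*T_{S'/k}=q^*g^*T_{S'/k}$, the field $q^*\theta$ lands in $q^*T_{Y/S'}=T_{X/S}$, and $\iota\colon T_{X/S}\hookrightarrow T_{X/k}$. Because the correction factors through $\ker(\pi)=T_{X/S}$, the composite $\pi\circ\nabla$ still equals $\mathrm{id}$, so $\nabla$ is genuinely a $1$-connection.

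The main work is to verify that $\nabla$ is integrable with nilpotent $p$-curvature, and that $C^{-1}_{\tilde F}$ is an equivalence. For integrability I would expand the curvature $a\wedge b\mapsto \overline{[\nabla(a),\nabla(b)]}$ into the $\nabla_0$-term (which vanishes, $\nabla_0$ being integrable), the two mixed terms, and the pure correction term; the pure term is governed by $[\theta,\theta]=0$, while the mixed terms cancel precisely because $\theta$, being pulled back along $q$ from $Y/S'$, is horizontal for the Frobenius-descent foliation $\nabla_0$, together with the cocycle identity satisfied by $\zeta$ as the reduction of the honest morphism $\tilde F$. The hard part will be computing the $p$-curvature of $\nabla$: one must show it returns $\theta$ up to the Frobenius twist and sign, exactly as in the linear theory, so that nilpotence is transported along the correspondence. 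This is the genuine obstacle, since it requires the nonlinear analogue of the Ogus--Vologodsky $p$-curvature formula rather than a formal manipulation. The quasi-inverse $C_{\tilde F}$ is then built by reading the recipe backwards, using $(\zeta^\vee)^{-1}$ together with the inverse equivalence of Theorem~\ref{nonlinear Cartier descent} applied to the $p$-curvature-free part; the natural isomorphisms $C_{\tilde F}\circ C^{-1}_{\tilde F}\cong \mathrm{id}$ and $C^{-1}_{\tilde F}\circ C_{\tilde F}\cong\mathrm{id}$ follow from the invertibility of $\zeta^\vee$ and the uniqueness built into that theorem.

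Finally, the two listed properties. For (i), restricting to linear objects under the embedding of $\MIC(S)$ (resp. $\HIG(S')$) into $\NMIC(S)$ (resp. $\NHIG(S')$) by total spaces of the associated bundles, the Frobenius pullback $X=F^*Y$ becomes the total space of $F^*E$, $\nabla_0$ becomes the canonical connection, and the correction $\iota\circ(q^*\theta)\circ(f^*\zeta^\vee)$ unwinds to the linear term of the Ogus--Vologodsky inverse Cartier transform; hence $C^{-1}_{\tilde F}$ agrees with it on the linear subcategories. For (ii), as already noted $f$ is smooth if and only if $g$ is, and the same holds for the quasi-inverse, so the equivalence restricts to one between $\NMIC^s_{nil}(S)$ and $\NHIG^s_{nil}(S')$.
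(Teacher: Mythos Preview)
Your overall architecture matches the paper's, but there are two genuine errors that break the argument.

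First, $\zeta^\vee\colon T_{S/k}\to F^*T_{S'/k}$ is \emph{not} an isomorphism. It is not the inverse Cartier operator; it is the map induced by a chosen Frobenius lift, and in coordinates $\tilde F^*(s_i)=s_i^p+pa_i$ one has $\zeta(\partial/\partial s_i)=\sum_j f_{ij}\,\partial/\partial s_j$ with $f_{ij}=\partial a_j/\partial s_i+\delta_{ij}s_j^{p-1}$. For the standard lift this matrix is $\mathrm{diag}(s_1^{p-1},\dots,s_m^{p-1})$, which degenerates along the coordinate hyperplanes. Consequently your construction of the quasi-inverse $C_{\tilde F}$ ``using $(\zeta^\vee)^{-1}$'' cannot work. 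The paper instead builds the inverse by deforming a given $\nabla$ to a new connection $\nabla_{new}=\nabla+\sum_{n\ge 0}[p]^n\circ(\psi_\nabla\circ f^*\zeta)\circ(f^*\zeta)^n$, proves via the Jacobson formula that $\psi_{\nabla_{new}}=0$, and then applies nonlinear Cartier descent; no inverse of $\zeta$ is ever invoked.

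Second, your formula $\nabla=\nabla_0+\iota\circ(q^*\theta)\circ(f^*\zeta^\vee)$ is only correct when $\theta$ is $p$-nilpotent, i.e.\ $[p]\circ\theta=0$. For a general nilpotent $\theta$ the paper needs the full series
\[
\nabla=\nabla_{can}-\sum_{n\ge 0}[p]^n\circ(\pi^*\theta\circ f^*\zeta)\circ(f^*\zeta)^n,
\]
and both the integrability check and the $p$-curvature identity $\psi_\nabla=\pi^*\theta$ rely on this. The integrability of the mixed terms is not a formal ``cocycle'' cancellation but uses the explicit symmetry $\partial f_{ik}/\partial s_j=\partial f_{jk}/\partial s_i$, while the $p$-curvature formula hinges on the identity $\partial^{p-1}f_{ij}/\partial s_i^{p-1}=-\delta_{ij}$ fed into the Jacobson formula; these are the two nontrivial ingredients your sketch is missing.
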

Our method is to construct explicit deformations of connections by using Frobenius liftings, and then apply  the nonlinear Cartier descent theorem. We also obtain a partial generalization of Theorem \ref{local nonlinear Hodge correspondence} to a global base, via the technique of exponential twisting (\cite{LSZ15}, \cite{SSW}).
\begin{theorem}[Theorem \ref{global nonlinear Hodge correspondence}]\label{global nonlinear Hodge correspondence main result}
Assume only the smooth $k$-variety $S$ is $W_2(k)$-liftable. Let $G$ be a split reductive group over $k$ with $h_{G}<p$, where $h_G$ is the Coxeter number of $G$. Then a $W_2(k)$-lifting of $S$ induces an equivalence of categories between $\NMIC^G_{nil}(S)$ and $\NHIG^G_{nil}(S')$.  
\end{theorem}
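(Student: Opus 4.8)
The plan is to globalize the local correspondence of Theorem~\ref{local nonlinear Hodge correspondence main result} by a descent argument, using exponential twisting to reconcile the choices of local Frobenius lifting. First I would fix a finite affine open cover $\{U_i\}$ of $S$. Whereas Theorem~\ref{local nonlinear Hodge correspondence main result} presupposes a global lifting of $(S,F_{S/k})$, here only $S$ is assumed $W_2(k)$-liftable, so Frobenius liftings exist only locally: each affine $U_i$ carries a $W_2(k)$-lifting $\tilde U_i$ together with a Frobenius lifting $F_i$ of $F_{U_i/k}$, making $(U_i,F_{U_i/k})$ $W_2(k)$-liftable. Theorem~\ref{local nonlinear Hodge correspondence main result} then supplies on each $U_i$ an explicit equivalence $\sC_i\colon \NHIG^G_{nil}(U_i')\to \NMIC^G_{nil}(U_i)$ depending on $F_i$; on the Higgs side the objects are canonical and lifting-independent, so the dependence on $F_i$ is concentrated entirely in the $1$-connection of the flat object produced by $\sC_i$.

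The heart of the argument is to glue the $\sC_i$ along double overlaps. On $U_{ij}=U_i\cap U_j$ the two Frobenius liftings differ by a homomorphism $h_{ij}=p^{-1}(F_i^{\sharp}-F_j^{\sharp})$, a section of $F_{S/k}^{*}T_{S'/k}$ over $U_{ij}$. Contracting the nilpotent Higgs field of a given object against $h_{ij}$ produces a nilpotent vertical vector field on the relevant total space, and I would define the transition isomorphism $\tau_{ij}$ between $\sC_i$ and $\sC_j$ by exponentiating it. This is exactly the exponential twisting of \cite{LSZ15},\cite{SSW}, transported to the nonlinear, $G$-structured setting. The hypothesis $h_G<p$ is what makes $\tau_{ij}$ meaningful: it is the threshold below which nilpotent elements of $\mathfrak g$ exponentiate into $G$, so that $\exp$ of the contracted field is an honest $G$-equivariant $S$-automorphism of the total space rather than a merely formal series.

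I would then verify the cocycle condition $\tau_{ik}=\tau_{jk}\circ\tau_{ij}$ on triple overlaps $U_{ijk}$. The gluing data are additive, $h_{ij}+h_{jk}=h_{ik}$, and the three contracted fields are built from a single nilpotent Higgs field and hence commute; the identity thus reduces to additivity of $\exp$ on a commuting nilpotent family, again valid because $h_G<p$ prevents any denominator from vanishing. Descent along $\{\tau_{ij}\}$ glues the local $\sC_i$ into a global functor $\NHIG^G_{nil}(S')\to \NMIC^G_{nil}(S)$, and its full faithfulness and essential surjectivity are inherited from the local statements of Theorem~\ref{local nonlinear Hodge correspondence main result}, since both may be tested after restriction to the cover.

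The main obstacle I anticipate is confirming that the linear formalism of exponential twisting genuinely operates on the nonlinear data $(f\colon X\to S,\nabla)$: one must realize each $\tau_{ij}$ as an actual $G$-equivariant $S$-automorphism of the total space intertwining the two $1$-connections attached to $F_i$ and $F_j$, and check that exponentiating the contracted nilpotent field preserves the integrability of the resulting connection. Carrying the cocycle computation through with the $G$-structure intact—so that the twists land in $G$ and not merely in $\Aut$ of some associated object—is precisely where the Coxeter bound $h_G<p$ is indispensable, and I expect the bookkeeping of these $G$-valued exponentials, together with the compatibility of the descent with the subcategory of nonlinear bundles, to be the most delicate part.
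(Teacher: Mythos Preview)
Your proposal is essentially the paper's own proof: local inverse Cartier transforms via chosen Frobenius liftings, Deligne--Illusie elements $h_{ij}$ on overlaps, transition automorphisms obtained by exponentiating the contraction of $h_{ij}$ with the (pulled-back) Higgs field, cocycle from additivity of $h_{ij}$ plus commutativity of the images of $\theta$, and the Coxeter bound $h_G<p$ to make the exponential $\mathfrak g_{nil}\to G^u$ (Serre, Balaji--Deligne--Parameswaran) available.

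One point of clarification: you repeatedly speak of ``$G$-equivariant $S$-automorphisms of the total space'' and of keeping ``the $G$-structure intact,'' but in the paper's setup $G$ does not act on $X$. The role of $G$ is that the connected relative automorphism scheme $\Aut^0_{X/S}$ is assumed to be a principal $G$-bundle over $S$, so that $f_*T_{X/S}\cong\mathfrak{aut}_{X/S}$ is locally $\mathfrak g$; the contracted field $\pi^*\theta\circ g'^*h_{ij}$ is then an element of $\mathfrak g_{nil}(\sO_{S_{ij}})$ (Engel's theorem is invoked to place it in some $\mathfrak u$), and its exponential lies in $G^u(\sO_{S_{ij}})\subset \Aut^0_{Y'/S}(S_{ij})$, i.e.\ it \emph{is} an $S_{ij}$-automorphism of the fiber scheme, not a $G$-equivariant map. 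With that reinterpretation your outline matches the paper's argument, including the verification (their Lemma on gluing) that $dg_{ij}$ carries $\nabla_i$ to $\nabla_j$ and that $g_{ij}g_{jk}=g_{ik}$.
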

\begin{remark}
For precise description of the categories $\NMIC^G_{nil}(S)$ and $\NHIG^G_{nil}(S')$, we refer our readers to \S5. We remark that the previous results can be generalized to a more general base scheme $T$ in char $p$. We leave this task to interested readers.
\end{remark}
One may wonder what kind of objects in these categories \emph{naturally} correspond to each other. To this purpose, we introduce the category of \emph{nonlinear Fontaine modules} (over truncated Witt rings), which has natural functors into both categories. They are called nonlinear Fontaine modules, because we equip certain transversal foliated $S$-varieties with filtration structures as well as Frobenius structures, in a way very much like Faltings did in the linear setting (\cite[II. d)]{F}). See \S6 for details. 
\begin{theorem}[Theorem \ref{global one-periodicity}]
Assumption as in Theorem \ref{global nonlinear Hodge correspondence main result}. Let $(f,\nabla,Fil,\varphi)$ be a geometric strict $p$-torsion Fontaine module over $S$ with symmetry group $G$. Let $(g,\theta)$ be the associated graded Higgs $S$-variety to $(f,\nabla,Fil)$. Then $\varphi$ induces a natural isomorphism
$$
C^{-1}\pi^*(g,-\theta)\cong (f,\nabla),
$$
where $C^{-1}$ is the equivalence in Theorem \ref{global nonlinear Hodge correspondence main result}
from $\NHIG^G_{nil}(S')$ to $\NMIC^G_{nil}(S)$, and $\pi: S'\to S$ is the natural projection. 
\end{theorem}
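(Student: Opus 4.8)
The plan is to reduce the global statement to the explicit local construction of $C^{-1}$ and then to recognize the Frobenius structure $\varphi$ as exactly the datum that realizes the inverse Cartier transform of the graded object. First I would unwind the definition of the Frobenius structure in a geometric strict $p$-torsion Fontaine module, following the linear template of Faltings \cite{F}: the filtration $Fil$ produces the associated graded Higgs $S$-variety $(g,\theta)$, while $\varphi$ is a morphism relating the relative-Frobenius pullback of this graded datum to the flat datum $(f,\nabla)$, subject to horizontality with respect to $\nabla$, compatibility with $Fil$, and $G$-equivariance. Since ``strict $p$-torsion'' means we work over $k$ itself (the one-periodic case), $\varphi$ is an isomorphism rather than a merely strongly divisible morphism over a truncated Witt ring, and the content of the theorem is that this structural isomorphism coincides with the categorical isomorphism $C^{-1}\pi^*(g,-\theta)\cong (f,\nabla)$ in $\NMIC^G_{nil}(S)$.

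Next I would work locally. Over an open $U\subseteq S$ admitting a lifting $\tilde F$ of $F_{U/k}$ to $W_2(k)$---available by the liftability hypothesis of Theorem \ref{global nonlinear Hodge correspondence main result}---the local nonlinear Hodge correspondence of Theorem \ref{local nonlinear Hodge correspondence main result} produces $C^{-1}$ by deforming the nilpotent Higgs field to an integrable $1$-connection through the exponential twisting determined by $\tilde F$. Applied to $\pi^*(g,-\theta)|_U$, this yields an explicit transversal foliated $U$-variety. I would then match this, term by term, with the deformation encoded by $\varphi|_U$: the grading recovers $(g,\theta)$, and the Frobenius structure identifies the $\tilde F$-deformation of $\pi^*(g,-\theta)$ with $(f,\nabla)|_U$. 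Carefully tracking the sign with which the Higgs field enters the exponential should explain---and force---the $-\theta$ appearing in the statement, as well as the Frobenius twist $\pi: S'\to S$.

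To globalize, I would glue the local isomorphisms by the exponential twisting technique of \cite{LSZ15} and \cite{SSW} that already underlies Theorem \ref{global nonlinear Hodge correspondence main result}. Two Frobenius liftings differ by a section of $F^*\Omega^1_{S'/k}$, and the corresponding local inverse Cartier transforms differ by the associated exponential gauge transformation; the condition $h_G<p$ guarantees that the relevant exponential series terminate and remain $G$-valued. The Fontaine-module axioms---horizontality and strictness of $\varphi$---ensure that the local isomorphisms transform under precisely the same cocycle, hence agree on overlaps and patch to a global isomorphism. Naturality in the Fontaine module then follows from the functoriality of each construction.

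The hard part will be the gluing step: I must verify that the $1$-cocycle measuring the dependence of $\varphi$ on the choice of Frobenius lifting coincides with the cocycle governing $C^{-1}$, i.e. that $\varphi$ is compatible with the exponential-twisting descent datum. In the nonlinear setting this identity has to be checked at the level of the total spaces $X$ and $Y$ rather than of vector bundles, using the explicit form of the nonabelian deformation, which is where genuine work beyond the linear case is required. A secondary, more bookkeeping, difficulty is confirming that the twist $S'\to S$ and the sign normalization $-\theta$ are exactly those dictated by the conventions in the definitions of $C^{-1}$ and of the graded Higgs variety.
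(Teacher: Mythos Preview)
Your approach is essentially the same as the paper's, and correct in outline: establish the isomorphism locally from the parallelism axiom on $\varphi$, then glue via exponential twisting. However, you have inverted the difficulty. What you flag as the ``hard part''---checking that the cocycle measuring the dependence of $\varphi$ on the Frobenius lifting coincides with the exponential-twisting cocycle governing $C^{-1}$---is in fact a tautology in the paper: the global notion of a geometric strict $p$-torsion Fontaine module with symmetry group $G$ (Definition \ref{global geometric nonlinear fontaine module}) \emph{axiomatizes} precisely this compatibility as the Taylor formula
\[
\psi_{(\tilde S_{\alpha},\tilde F_{\alpha})}=\exp(\pi^*\theta_{\alpha\beta}\circ f_0^{(p)*}h_{\alpha\beta})\circ \psi_{(\tilde S_{\beta},\tilde F_{\beta})}.
\]
So there is nothing to verify in the gluing step; it is built into the data. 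The paper's proof of Theorem \ref{global one-periodicity} is accordingly one sentence.

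The actual content lies in the local step, which is Proposition \ref{nonlinear Fontaine moduli implies one-periodicity}. There the key observation is not a term-by-term matching of deformations but rather that the parallelism condition (b) in Definition \ref{nonlinear fontaine module}, read through the splitting $T_{X}\cong f^*T_{S/k}\oplus \pi^*T_{X_0/S}$ induced by $\nabla$ and the fact that $d\pi$ kills the $f^*T_{S/k}$ summand (since $dF_S=0$), literally says $\nabla=\nabla_{can}+\pi^*\theta\circ f^*\zeta$. Since $\theta$ is $p$-nilpotent, this is exactly the formula for $C^{-1}$ applied to $(f_0,-\theta)$; the sign arises because the inverse Cartier transform is $\nabla_{can}-\pi^*\theta\circ f^*\zeta$. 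Your plan would get there, but you should recognize that this step is a direct unpacking of the axiom rather than a computation.
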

In our forthcoming work \cite{Sh}, we prove that a variant of Example \ref{nonabelian Hodge moduli} in positive characteristic is a geometric strict $p$-torsion Fontaine module. The explicit description of the equivalence as given in the proof of Theorem \ref{local nonlinear Hodge correspondence main result} is crucial in our approach.\\
 
{\bf Acknowledgements.} The author would like to thank Caucher Birkar for the provoking question about a nonlinear generalization of the work \cite{LSZ19}, which is the starting point of the whole thing. He would like to thank Paolo Casini for drawing his attention to the work of T. Ekedahl \cite{E}, which plays the pivotal role in the early stage of our treatment of the nonlinear Cartier descent. Comments from and discussions with Yixuan Fu, Yupeng Wang, Jianping Wang, Jinxing Xu, Zhaofeng Yu and Lei Zhang (USTC) at various stages of the work are greatly appreciated. The author would like to especially thank Jinxing Xu for his generosity-the collaboration in Appendix is largely due to his work, that is vital to generalize our original result which are only for smooth morphisms to the present result which work for arbitrary morphisms. Last but not least, the author would like to thank Guizhou Key Laboratory of Pattern Recognition and Intelligent System for its hospitality, where the final stage of the work was done. This work is partially supported by the Chinese Academy of Sciences Project for Young Scientists in Basic Research (Grant No. YSBR-032).

\section{Linear $\lambda$-connections on linear schemes}\label{pseduo-smooth morphisms}
Let $T$ be a noetherian scheme, and $S\to T$ be a \emph{smooth} morphism. For a morphism $f: X\to S$, there is the canonically associated exact sequence of sheaves of relative K\"ahler differentials (\cite[Proposition 8.11]{H})
\begin{eqnarray}\label{exact sequence of differential forms}
f^*\Omega_{S/T}\to \Omega_{X/T}\to \Omega_{X/S}\to 0.    
\end{eqnarray}
Set $T_{X/T}$ to the linear dual $\mathcal{H}om_{\sO_X}(\Omega_{X/T},\sO_X)$ and etc. We obtain the following exact sequence of $\sO_X$-modules:
\begin{eqnarray}\label{exact seqeunce of tangent sheaves}
0\to T_{X/S}\to T_{X/T}\to f^*T_{S/T}.    
\end{eqnarray}
We shall call $f$ \emph{pseudo-smooth} if the above sequence is also exact on the right. As $S\to T$ is smooth by assumption, $\Omega_{S/T}$ is locally free of finite rank (see e.g. \cite[Corollary 18.58]{GW}), and hence $T_{S/T}$ is locally free of finite rank too. Thus we obtain a locally split short exact sequence for a pseudo-smooth $T$-morphism $f$
\begin{eqnarray}\label{short exact sequence of tangent sheaves}
0\to T_{X/S}\to T_{X/T}\to f^*T_{S/T}\to 0.   
\end{eqnarray}
Pseudo-smooth smoothness does not imply smoothness. For example, the natural projection $X=X_0\times_TS\to S$, where $X_0\to T$ is non smooth, is pseudo-smooth but non smooth. However, we have the following 
\begin{proposition}
Notation and assumption as above. Then $f: X\to S$ is smooth if and only if $X\to T$ is smooth and $f$ is pseudo-smooth.    
\end{proposition}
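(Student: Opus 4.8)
The plan is to prove the two implications separately, with essentially all of the content concentrated in the ``if'' direction.

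For the ``only if'' direction, assume $f$ is smooth. Since $S\to T$ is smooth and smooth morphisms are stable under composition, the composite $X\to T$ is smooth. For pseudo-smoothness I would invoke the fact that, for a smooth morphism $f$, the cotangent sequence \eqref{exact sequence of differential forms} is not merely right exact but is short exact and locally split, i.e. $0\to f^*\Omega_{S/T}\to \Omega_{X/T}\to \Omega_{X/S}\to 0$ is locally split exact. Dualizing a locally split exact sequence of locally free sheaves (all three terms are locally free, since $f$, $X\to T$ and $S\to T$ are smooth) yields the short exact sequence \eqref{short exact sequence of tangent sheaves}; in particular $T_{X/T}\to f^*T_{S/T}$ is surjective, so $f$ is pseudo-smooth.

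For the ``if'' direction, assume $X\to T$ is smooth and $f$ is pseudo-smooth. First, $f$ is locally of finite presentation, being a morphism between two schemes locally of finite presentation over the noetherian base $T$. Since $X\to T$ and $S\to T$ are smooth, $\Omega_{X/T}$ and $\Omega_{S/T}$, hence $f^*\Omega_{S/T}$, are locally free of finite rank. Pseudo-smoothness says that $\alpha^\vee\colon T_{X/T}\to f^*T_{S/T}$ is a surjection onto a locally free sheaf, hence is locally split; dualizing a local splitting and using that the sheaves involved are reflexive (being locally free) shows that $\alpha\colon f^*\Omega_{S/T}\to \Omega_{X/T}$ is a locally split monomorphism. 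Consequently its cokernel $\Omega_{X/S}$ is locally a direct summand of $\Omega_{X/T}$, hence locally free. The remaining point, which is the crux, is to upgrade this to smoothness of $f$: local freeness of $\Omega_{X/S}$ alone does not suffice, since one must still establish flatness. Here I would verify the infinitesimal lifting criterion. Given a square-zero extension $Y_0\hookrightarrow Y$ of affine $S$-schemes with ideal $I$, a map $w\colon Y_0\to X$ over $S$, and the structure map $u\colon Y\to S$, I first use formal smoothness of $X\to T$ to produce a $T$-morphism $\ell\colon Y\to X$ extending $w$. The two $T$-morphisms $f\circ \ell$ and $u$ agree on $Y_0$, so their difference is a class in $\Hom_{\sO_{Y_0}}(v^*\Omega_{S/T},I)$ with $v=f\circ w$. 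Since the $T$-liftings $\ell$ form a torsor under $\Hom_{\sO_{Y_0}}(w^*\Omega_{X/T},I)$, and modifying $\ell$ by a derivation $D$ alters $f\circ\ell$ by $D\circ w^*\alpha$, the local splitness of $\alpha$ makes $\Hom_{\sO_{Y_0}}(w^*\Omega_{X/T},I)\to \Hom_{\sO_{Y_0}}(v^*\Omega_{S/T},I)$ surjective; hence $\ell$ can be corrected to an $S$-morphism lifting $w$. This proves $f$ is formally smooth, and together with local finite presentation, smooth.

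The main obstacle is precisely this last step: converting the infinitesimal surjectivity $T_{X/T}\twoheadrightarrow f^*T_{S/T}$ into genuine flatness, and hence smoothness, of $f$. An alternative to the lifting argument is to establish flatness via the fiberwise criterion (EGA IV, 11.3.10): the local splitting of $\alpha$ persists after restriction to a fiber $X_t\to S_t$ over $t\in T$, giving a pseudo-smooth morphism between smooth $k(t)$-schemes, for which the Jacobian (submersion) criterion over a field yields smoothness of the fibers; combined with flatness of $X\to T$ this gives flatness of $f$, and then smoothness follows since $\Omega_{X/S}$ is locally free.
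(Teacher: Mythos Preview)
Your proof is correct. The ``only if'' direction matches the paper exactly. For the ``if'' direction, the paper takes a shorter route: after dualizing the locally split short exact sequence \eqref{short exact sequence of tangent sheaves} back to obtain that \eqref{exact sequence of differential forms} is left exact and locally split, it simply invokes \cite[Corollary 18.72]{GW}, which states that for $X\to T$ smooth, $f$ is smooth if and only if the cotangent sequence is locally split exact. You instead reprove this criterion from scratch via the infinitesimal lifting criterion (and sketch a second route through the fiberwise flatness criterion). Your argument is more self-contained and makes transparent \emph{why} the surjectivity $T_{X/T}\twoheadrightarrow f^*T_{S/T}$ forces formal smoothness, at the cost of length; the paper's approach is a one-line citation. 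Either is acceptable, and indeed your lifting argument is essentially the standard proof of the cited corollary.
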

\begin{proof}
Suppose $f$ is smooth. Then the morphism $X\to T$ is smooth since it is the composite of two smooth morphisms. Also, the sequence \ref{exact sequence of differential forms} is exact on the left and locally split (see e.g. \cite[Corollay 18.72]{GW}). Hence we obtain a short exact sequence of locally free sheaves:
$$
0\to f^*\Omega_{S/T}\to \Omega_{X/T}\to \Omega_{X/S}\to 0.
$$
Taking the dual, we obtain the short exact sequence \ref{short exact sequence of tangent sheaves}. So $f$ is pseudo-smooth. Conversely, if $f$ is pseudo-smooth and $X\to T$ is smooth, we immediately see that $T_{X/S}$ is locally free as well. Taking the dual of the short exact sequence \ref{short exact sequence of tangent sheaves}, we get that the sequence \ref{exact sequence of differential forms} is exact on the left and locally split. Thus $f$ is smooth (see e.g. \cite[Corollay 18.72]{GW}).  
\end{proof}

Let $\sV$ be a coherent sheaf on $S$. Recall that a $\lambda$-connection on $\sV$, where $\lambda\in \Gamma(T,\sO_T)$, is an $\sO_T$-linear map 
$$
\nabla: \sV\to \sV\otimes \Omega_{S/T}
$$ 
satisfying the $\lambda$-Leibnitz rule
$$
\nabla(fs)=\lambda s\otimes df+f\nabla(s),\quad f\in \sO_S, \ s\in \sV.
$$
The curvature $F_{\nabla}$ of $\nabla$ is defined to be the composite
$$
\sV\stackrel{\nabla}{\to}\sV\otimes \Omega_{S/T}\stackrel{\nabla^1}{\to}\sV\otimes \wedge^2\Omega_{S/T},
$$
where $\nabla^1$ is an $\sO_T$-linear map determined by 
$$
\nabla^1(s\otimes \omega):=\nabla(s)\wedge \omega+\lambda s\otimes d\omega, \quad s\in \sV,\ \omega\in \Omega_{S/T}.
$$
It is $\sO_S$-linear. We say $\nabla$ is integrable when $F_{\nabla}=0$. \\

Recall that one may canonically associate an $S$-scheme to $\sV$, the so-called \emph{linear scheme} construction (see e.g. \cite[\S3]{N}). It is defined to be
$$
\pi: V:=\textbf{Spec} S(\sV)\to S
$$
where $S(\sV)$ is the symmetric algebra of $\sV$ over $\sO_S$. Linear schemes generalize the notion of geometric vector bundles (see e.g. \cite[Ch II, Ex. 5.18]{H}), which are the special cases where $\sV$ is locally free.  
\begin{proposition}\label{pseduo-smooth morphisms result}
Suppose there exists a $\lambda$-connection on $\sV$ where $\lambda$ is a unit. Then the associated linear scheme $\pi: V\to S$ to $\sV$ is pseudo-smooth.     
\end{proposition}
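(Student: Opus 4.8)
The plan is to reinterpret the hypothesis geometrically: I will show that a linear $\lambda$-connection on $\sV$ induces a $\lambda$-connection on the morphism $\pi\colon V\to S$ in the sense of Definition \ref{lambda connection}, and then invoke that $\lambda$ is a unit. By definition, pseudo-smoothness of $\pi$ is exactly the surjectivity of the natural map $\mathrm{pr}\colon T_{V/T}\to\pi^*T_{S/T}$ in \ref{exact seqeunce of tangent sheaves}. So it suffices to produce an $\sO_V$-linear map $\widetilde\nabla\colon \pi^*T_{S/T}\to T_{V/T}$ with $\mathrm{pr}\circ\widetilde\nabla=\lambda\cdot\mathrm{id}$: then $\lambda^{-1}\widetilde\nabla$ splits $\mathrm{pr}$, and surjectivity follows. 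Since $\sV$ is only coherent, I will avoid dualizing a split sequence of differentials and instead work directly with tangent sheaves as sheaves of relative derivations, $T_{V/T}=\mathcal{H}om_{\sO_V}(\Omega_{V/T},\sO_V)=\mathcal{D}er_{\sO_T}(\sO_V,\sO_V)$, using that $S\to T$ is smooth so that $T_{S/T}$ is locally free and $\pi^*T_{S/T}=\sO_V\otimes_{\sO_S}T_{S/T}$.

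The construction is local and natural, hence sheafifies; work over an affine open where $A=\sO_S$, $M=\sV$, and $B=S_A(M)=\sO_V$. Given a local section $\partial$ of $T_{S/T}$, contraction against $\nabla$ yields an $\sO_T$-linear operator $\nabla_\partial\colon M\to M$ obeying $\nabla_\partial(am)=\lambda\,\partial(a)\,m+a\,\nabla_\partial(m)$ for $a\in A$, $m\in M$. This is precisely the compatibility needed to extend $\nabla_\partial$ to a unique $\sO_T$-derivation $D_\partial$ of $B$ that acts as $\lambda\partial$ on $A=S^0M$ and as $\nabla_\partial$ on $M=S^1M$, via $D_\partial(m_1\cdots m_n)=\sum_i m_1\cdots\nabla_\partial(m_i)\cdots m_n$. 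Thus $D_\partial\in\mathcal{D}er_{\sO_T}(B,B)=T_{V/T}$, and $D_\partial$ restricts to $\lambda\partial$ on $A$. Because contraction is tensorial, $\nabla_{a\partial}=a\nabla_\partial$, so $\partial\mapsto D_\partial$ is $A$-linear; hence $b\otimes\partial\mapsto b\,D_\partial$ is a well-defined $B$-linear map $\widetilde\nabla\colon\pi^*T_{S/T}\to T_{V/T}$.

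It remains to check the two points that make $\widetilde\nabla$ a $\lambda$-connection. First, well-definedness of $D_\partial$: the defining formula is symmetric in the factors $m_i$, so it respects the relations of the symmetric algebra and genuinely defines a derivation, and uniqueness of the derivation extending a prescribed action on algebra generators guarantees that the local pieces glue to a global morphism of sheaves. Second, the splitting identity: identifying $\mathrm{pr}$ with restriction of a derivation to $\sO_S\subset\sO_V$, one has $\mathrm{pr}(D_\partial)=D_\partial|_A=\lambda\partial$, whence $\mathrm{pr}\circ\widetilde\nabla=\lambda\cdot\mathrm{id}_{\pi^*T_{S/T}}$. As $\lambda$ is a unit, $\lambda^{-1}\widetilde\nabla$ splits $\mathrm{pr}$, so \ref{exact seqeunce of tangent sheaves} is exact on the right and $\pi$ is pseudo-smooth.

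The one genuinely delicate step is the passage from $\sV$ to its symmetric algebra: one must verify that the contracted connection $\nabla_\partial$, whose $\lambda$-Leibniz rule is exactly the compatibility required, extends uniquely and coherently to a relative derivation of $\sO_V$ even though $\sV$ need not be locally free, and that this extension is natural enough to globalize. Everything else — $A$-linearity in $\partial$, the splitting identity, and the final division by the unit $\lambda$ — is formal. I note in passing that the case $\lambda=0$ produces instead $D_\partial|_A=0$, i.e. a vertical field valued in $T_{V/S}$, recovering the Higgs/$0$-connection picture and explaining why invertibility of $\lambda$ is precisely what yields surjectivity.
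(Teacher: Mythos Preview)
Your proof is correct and follows essentially the same approach as the paper: both extend the contracted operator $\nabla_\partial$ to an $\sO_T$-derivation of the symmetric algebra $S_{\sO_S}(\sV)$ via the $\lambda$-Leibniz rule, then observe that this furnishes a section of $T_{V/T}\to\pi^*T_{S/T}$ once $\lambda$ is inverted. The only difference is presentational---the paper works in local coordinates $\{s_i\}$ on $S$ together with a generating set $\{a_j\}$ for $M$ and checks well-definedness of the derivation explicitly on zero relations $\sum_j f_j a_j=0$, whereas you package the same verification coordinate-freely; the ``delicate step'' you flag is precisely the computation the paper spells out by hand.
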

\begin{proof}
Since $\lambda^{-1}\nabla$ is simply a connection, we may assume $\lambda=1$ and $\nabla$ is a connection on $\sV$. The problem is locally in $S$. Hence we may assume $T=\Spec(A)$ and $S=\Spec(B)$. Then $\sV$ is the sheafification of $M=\Gamma(S,\sV)$, which is a finitely generated $B$-module. Set $C=S_B(M)$ to be the symmetric algebra attached to $M$ over $B$. Since $B$ is smooth over $A$, we may further localize to guarantee the existence of a set of coordinates 
$\{s_1,\cdots,s_m\}\subset B$ on $\Spec(B)$ with 
$$
\Omega_{B/A}=B\{ds_1\}\oplus\cdots B\{ds_m\}.
$$
Let $\{\frac{\partial}{\partial s_i}\}$ be the dual basis of $Der_{A}(B)=Hom_B(\Omega_{B/A},A)$. We are going to construct a $C$-morphism
$$
Der_A(B)\otimes_BC\to Der_A(C)
$$
whose composite with the natural $C$-morphism $Der_A(C)\to Der_A(B)\otimes_BC$ is the identity. Note that $Der_A(B)\otimes_BC$ is a free $C$-module with basis $\{\frac{\partial}{\partial s_i}\}$. Any of its $C$-morphism to $Der_A(C)$ is specified by a collection of elements $\{D_i\}\subset Der_A(C)$, such that $D_i$ will be the image of $\frac{\partial}{\partial s_i}$. We claim that the following association 
$$
ds_j\mapsto \delta_{ij}, 1\leq j\leq m, \quad da\mapsto \nabla_{\frac{\partial}{\partial s_i}}(a), \ a\in M
$$
defines an element $D_i$ in $Der_A(C)$. Indeed, by taking a set of generators $\{a_1,\cdots,a_r\}$ of $M$ as $C$-module, $\Omega_{C/A}$ is generated by $\{ds_1,\cdots,ds_m,da_1,\cdots,da_r\}$. So it suffices to verify the above association is well-defined. Let $\sum_j f_ja_j$ (with $\{f_j\}\subset B$) be a zero relation in $M$. We need to show $d(\sum_jf_ja_j)$, which is zero in $\Omega_{C/A}$, shall be mapped to zero under the association. As we have
\begin{eqnarray*}
    d(\sum_jf_ja_j)&=& \sum_j df_ja_j+f_jda_j\\
    &=&\sum_j(\sum_l\frac{\partial f_j}{\partial s_l}ds_l)a_j+f_jda_j,
\end{eqnarray*}
it will be sent to the element $\sum_j\frac{\partial f_j}{\partial s_i}a_j+f_j\nabla_{\frac{\partial}{\partial s_i}}a_j$. By the Leibnitz rule, it holds that
$$
\sum_j\frac{\partial f_j}{\partial s_i}a_j+f_j\nabla_{\frac{\partial}{\partial s_i}}a_j=\sum_j\nabla_{\frac{\partial}{\partial s_i}}(f_ja_j),
$$
which by additivity of $\nabla$ equals $\nabla_{\frac{\partial}{\partial s_i}}(\sum_jf_ja_j)$ and hence is zero. The claim is proved. It is clear that the so-constructed morphism satisfies the splitting property as requested. This completes the proof.
\end{proof}
The previous proposition enables us to construct many more examples of pseudo-smooth but non smooth morphisms in positive characteristic: For simplicity, let us take $T$ to be spectrum of an algebraically closed field of char $p>0$. Then for any coherent sheaf $\sV$ on $S$, $F_S^*\sV$ admits the canonical connection. By faithfully flat descent, $F_S^*\sV$ is non locally free when $\sV$ is non locally free. It follows that the associated linear scheme which is just $V\times_{S,F_S}S\to S$ is non smooth, while it is pseduo-smooth by Proposition \ref{pseduo-smooth morphisms result}.

\begin{proposition}\label{flat/higgs bundles are examples of foliated/higgs varieties}
Let $\sV$ be a coherent sheaf on $S$, and $\nabla$ be a $\lambda$-connection on $\sV$. Then over the linear scheme $\pi: V\to S$ attached to $\sV$, there exists a canonically associated $\lambda$-connection $\nabla_{\pi}$. Moreover, when $\lambda$ is either a unit or zero, $\nabla$ is integrable if and only if $\nabla_{\pi}$ is integrable.    
\end{proposition}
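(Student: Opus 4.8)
The plan is to build $\nabla_\pi$ by extending the linear $\lambda$-connection $\nabla$ to the symmetric algebra and reinterpreting it geometrically, and to deduce the integrability equivalence from a single curvature computation that reduces everything to the degree-one part. First I would extend $\nabla$ to a $\lambda$-connection $\nabla^{S}$ on the $\sO_S$-algebra $\sA:=S(\sV)=\pi_*\sO_V$: concretely, $\nabla^{S}\colon \sA\to \sA\otimes_{\sO_S}\Omega_{S/T}$ is the unique $\sO_T$-linear map with $\nabla^{S}|_{\sO_S}=\lambda\, d$, with $\nabla^{S}|_{\sV}=\nabla$, and obeying the algebra Leibniz rule $\nabla^{S}(xy)=x\,\nabla^{S}(y)+y\,\nabla^{S}(x)$; existence and uniqueness follow by extending along the symmetric powers $S^n(\sV)$, and the $\lambda$-Leibniz rule of $\nabla$ guarantees compatibility with the $\sO_S$-module structure. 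For a local vector field $\xi\in T_{S/T}$, contraction gives an $\sO_T$-linear derivation $\nabla^{S}_\xi:=\langle\xi,\nabla^{S}(-)\rangle$ of $\sA$, i.e. a section of $\pi_*T_{V/T}$, satisfying $\nabla^{S}_\xi(f)=\lambda\,\xi(f)$ for $f\in\sO_S$. Since $\xi\mapsto\nabla^{S}_\xi$ is $\sO_S$-linear, the adjunction $\Hom_{\sO_V}(\pi^*T_{S/T},T_{V/T})=\Hom_{\sO_S}(T_{S/T},\pi_*T_{V/T})$ produces an $\sO_V$-linear map $\nabla_\pi\colon\pi^*T_{S/T}\to T_{V/T}$, and the relation $\nabla^{S}_\xi(f)=\lambda\,\xi(f)$ says exactly that the projection $T_{V/T}\to\pi^*T_{S/T}$ sends $\nabla_\pi(\xi)$ to $\lambda\xi$, so $\nabla_\pi$ is a $\lambda$-connection on $\pi$. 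In local coordinates it recovers the derivations of the proof of Proposition \ref{pseduo-smooth morphisms result} (now with the normalization $D_i(s_j)=\lambda\delta_{ij}$), and for $\lambda=0$ the same $\nabla^{S}_\xi$ is $\sO_S$-linear, giving a Higgs field $\nabla_\pi\colon\pi^*T_{S/T}\to T_{V/S}$.

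For integrability, fix local $\xi,\eta\in T_{S/T}$ and set $R(\xi,\eta):=[\nabla^{S}_\xi,\nabla^{S}_\eta]-\lambda\,\nabla^{S}_{[\xi,\eta]}$, a bracket of derivations of $\sA$. Using $\nabla^{S}_\zeta(f)=\lambda\,\zeta(f)$ together with $\zeta(\lambda)=0$ for $\zeta\in T_{S/T}$, a direct check shows that $R(\xi,\eta)$ annihilates $\sO_S$; hence, since $S\to T$ is smooth and \eqref{exact seqeunce of tangent sheaves} identifies $\ker(T_{V/T}\to\pi^*T_{S/T})$ with $T_{V/S}$, the derivation $R(\xi,\eta)$ is vertical, i.e. an $\sO_S$-linear derivation of $\sA$. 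Because $\nabla^{S}|_{\sV}=\nabla$, restriction to the degree-one generators $\sV\subset\sA$ gives $R(\xi,\eta)|_{\sV}=[\nabla_\xi,\nabla_\eta]-\lambda\,\nabla_{[\xi,\eta]}$, which is precisely the contraction $F_\nabla(\xi,\eta)$ of the linear curvature (an elementary unwinding of the definition of $F_\nabla$ through $\nabla^1$). As an $\sO_S$-linear derivation of a symmetric algebra is determined by, and vanishes together with, its restriction to the degree-one part, I conclude $R(\xi,\eta)=0$ if and only if $F_\nabla(\xi,\eta)=0$.

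It then remains to match $R$ with the geometric curvature. When $\lambda$ is a unit, Proposition \ref{pseduo-smooth morphisms result} yields the splitting $T_{V/T}=T_{V/S}\oplus\nabla_\pi(\pi^*T_{S/T})$ and the isomorphism $T_{V/T}/\lambda\cdot\mathrm{im}(\nabla_\pi)\cong T_{V/S}$, and the curvature of $\nabla_\pi$ is $\xi\wedge\eta\mapsto\overline{[\nabla_\pi(\xi),\nabla_\pi(\eta)]}=\overline{[\nabla^{S}_\xi,\nabla^{S}_\eta]}$. Since $\lambda\,\nabla^{S}_{[\xi,\eta]}=\lambda\,\nabla_\pi([\xi,\eta])\in\lambda\cdot\mathrm{im}(\nabla_\pi)$, this class equals the class of $R(\xi,\eta)$, which under $T_{V/T}/\lambda\cdot\mathrm{im}(\nabla_\pi)\cong T_{V/S}$ is $R(\xi,\eta)$ itself. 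When $\lambda=0$ the curvature is directly $\xi\wedge\eta\mapsto[\nabla_\pi(\xi),\nabla_\pi(\eta)]=[\nabla^{S}_\xi,\nabla^{S}_\eta]=R(\xi,\eta)$. In both cases the $\sO_V$-linear curvature of $\nabla_\pi$ vanishes if and only if $R(\xi,\eta)=0$ for all local $\xi,\eta$ (these generate $\wedge^2\pi^*T_{S/T}$ over $\sO_V$), which by the previous paragraph happens exactly when $F_\nabla=0$, giving the asserted equivalence.

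The step I expect to require the most care is the identification of the geometric curvature with $R(\xi,\eta)$: one must see that the Lie-bracket correction term $\lambda\,\nabla_{[\xi,\eta]}$, absent from the defining formula $a\wedge b\mapsto\overline{[\nabla_\pi(a),\nabla_\pi(b)]}$, is precisely absorbed by passing modulo $\lambda\cdot\mathrm{im}(\nabla_\pi)$ (this is where $\lambda$ a unit, versus $\lambda=0$, enters), and that the resulting class in $T_{V/S}$ is computed correctly by restriction to the degree-one generators. The remaining verifications—well-definedness of $\nabla^{S}$ on the symmetric powers and the $\sO_V$-linearity and coordinate-independence of $\nabla_\pi$—are routine and follow the pattern of the proof of Proposition \ref{pseduo-smooth morphisms result}.
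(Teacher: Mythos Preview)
Your argument is correct and follows the same underlying idea as the paper's proof---extend the $\lambda$-connection to derivations of the symmetric algebra and reduce the curvature computation to the degree-one part---but your packaging is more intrinsic. The paper works throughout in explicit local coordinates: it fixes a basis $\{\partial/\partial s_i\}$ of $T_{S/T}$ and generators $\{a_j\}$ of $\sV$, defines the derivations $D_i$ on these generators, checks well-definedness by hand against the relations among the $a_j$, computes $[D_i,D_j]$ via the connection matrices $(b^i_{jk})$, and then has a separate gluing step to pass from local to global. Your use of the algebra-Leibniz extension $\nabla^S$ together with the adjunction $\Hom_{\sO_V}(\pi^*T_{S/T},T_{V/T})\cong\Hom_{\sO_S}(T_{S/T},\pi_*T_{V/T})$ makes the construction manifestly global and coordinate-free from the outset, and your observation that an $\sO_S$-linear derivation of $S(\sV)$ is determined by its restriction to $\sV$ replaces the explicit matrix comparison. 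Both routes pivot on the same splitting $T_{V/T}\cong T_{V/S}\oplus\nabla_\pi(\pi^*T_{S/T})$ (for $\lambda$ a unit) to identify the geometric curvature with the vertical bracket; your treatment of the correction term $\lambda\,\nabla^S_{[\xi,\eta]}$ being absorbed modulo $\lambda\cdot\mathrm{im}(\nabla_\pi)$ is exactly what the paper's coordinate computation is implicitly doing when it uses $[\partial/\partial s_i,\partial/\partial s_j]=0$.
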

\begin{proof}
First we assume $S$ small affine as in the proof of Proposition \ref{pseduo-smooth morphisms result}. We shall modify the construction of a $C$-morphism 
$$
\alpha: Der_A(B)\otimes C\to Der_A(C)
$$ as follows: For each $i$, one sends $\frac{\partial}{\partial s_i}$ to $D_i\in Der_A(C)$ which is defined by
$$
ds_j\mapsto \lambda\delta_{ij}, 1\leq j\leq m, \quad da\mapsto \nabla_{\frac{\partial}{\partial s_i}}(a), \ a\in M.
$$
By the same argument as above (replacing the Leibnitz rule with the $\lambda$-Leibnitz rule), we know this is well-defined. Denote the morphism by $\tilde \nabla$. Clearly, its composite with the morphism $Der_A(C)\to Der_A(B)\otimes C$ is $\lambda\cdot id$. Next, assuming $\lambda$ is either a unit or zero, we shall verify that $\nabla$ is integrable if and only if $\tilde \nabla$ is integrable. As $\{a_1,\cdots,a_r\}$ is a generating system of $M$ as $B$-module, we may express for each triple $(i,j)$ (not necessarily in a unique way) 
$$
\nabla_{\frac{\partial}{\partial s_i}}a_j=\sum_kb^i_{jk}a_k, \quad b^i_{jk}\in B.
$$
The curvature 
$$
F_{\nabla}: \wedge^2Der_A(B)\to End_{B}(M)
$$ 
is therefore given by sending $\frac{\partial}{\partial s_i}\wedge \frac{\partial}{\partial s_j}, i<j$ to
$$
a_k\mapsto \sum_l[\lambda(\frac{\partial b^j_{kl}}{\partial s_i}-\frac{\partial b^i_{kl}}{\partial s_j})+\sum_m(b^j_{km}b^i_{ml}-b^i_{km}b^j_{ml})]a_l
$$
On the other hand, we may also compute that the Lie bracket $[D_i,D_j]\in Der_A(C)$ sends each $ds_k$ to zero, and $da_k$ to the element which is exactly given by the right hand side of the above expression. Thus, when $F_{\nabla}=0$, it follows immediately that the curvature of $\tilde \nabla$ is zero. As proved in Propostion \ref{pseduo-smooth morphisms result}, when $\lambda$ is a unit, one has the decomposition
$$
Der_A(C)=\alpha(Der_A(B)\otimes_BC)\oplus Der_B(C). 
$$
Note that $[D_i,D_j]\in Der_B(C)$ for any $i,j$. Hence, the vanishing of the curvature of $\tilde \nabla$ (which means the quotient of $[D_i,D_j]$ modulo $\alpha(Der_A(B)\otimes_BC)$ is zero for each $i,j$) forces $[D_i,D_j]$ to be zero. Thus by the above identification, $F_{\nabla}$ is zero too. When $\lambda=0$, one sees directly from the identification that $F_{\nabla}=0$. Finally, it remains to show $\lambda$-connection exists for a global $S$. For that purpose, we cover $S$ with small affines $\{S_{\alpha}\}_{\alpha}$ and denote the local $\lambda$-connections obtained in the first step by
$$
\nabla_{\alpha}: \pi^*T_{S_{\alpha}/T}\to T_{V_{\alpha}/T},
$$
where $V_{\alpha}=\pi^{-1}(S_{\alpha})$. We shall show that $\{\nabla_{\alpha}\}_{\alpha}$ glue into a global morphism $\nabla_{\pi}: \pi^*T_{S/T}\to T_{X/T}$. Let $\{s'_1,\cdots,s'_m\}$ be a set of coordinates on $S_{\beta}$, and over $S_{\beta}$, 
$$
\nabla_{\beta}(\frac{\partial}{\partial s'_i})=D'_i,
$$
where $D'_i(ds'_j)=\lambda\delta_{ij}$ and 
$$
D'_i(da_j)=\nabla_{\frac{\partial}{\partial s'_i}}(a_j)=\sum_kb^{'i}_{jk}a_k.
$$

Then it boils down to show, over the overlap $S_{\alpha}\cap S_{\beta}$, $\nabla_{\alpha}=\nabla_{\beta}$, that is for any $i$
$$
\nabla_{\beta}(\frac{\partial}{\partial s_i})=D_i.
$$
But this is more than obvious, since everything involved in the verification is linear. We shall omit this detail, and hence conclude the proof. 
\end{proof}
For the special case where $T=\Spec(k)$ and $\lambda=0,1$, we therefore obtain faithful functors 
$$
G: \MIC(S)\to \NMIC(S),\quad G: \HIG(S)\to \NHIG(S),
$$
which restrict to $\MIC^{lf}(S)\to \NMIC^s(S)$ (resp. $\HIG^{lf}(S)\to \NHIG^s(S)$). Here the supscript lf in front of $\MIC$ or $\HIG$ denotes for the subcategory consisting of locally free objects. Note also that $\nabla_{\frac{\partial}{\partial s_i}}(a)$ in the above construction belongs to $M$, which are the set of degree one elements in $C=S_A(M)$. Hence it is reasonable to call the connection $\nabla_{\pi}$ on the linear scheme $\pi: V\to S$ linear.

\section{Nonlinear Cartier descent theorem}\label{sec:nonlinear_cartier_descent_theorem}
Let $k$ be a perfect field of char $p$. Recall that $S'=S\times_{k,\sigma}k$ is the base change of $S$ via the Frobenius automorphism $\sigma: k\to k$. Let $F_{S/k}: S\to S'$ be the relative Frobenius morphism. Recall (\cite[Theorem 5.1]{K}) that the Cartier descent, that sends a coherent $\sO_S$-module $\sV$ equipped with an integrable $k$-connection $\nabla$ to the sheaf $\sV^{\nabla=0}$ of flat sections, is an equivalence of the category $\MIC_0(S)$ of pairs $(\sV,\nabla)$ with vanishing $p$-curvature and the category $\HIG_0(S')$ of coherent $\sO_{X'}$-modules. The quasi-inverse functor is to equip $F_{S/k}^*\sE$ with the so-called canonical connection $\nabla_{can}$ for an $\sE\in \HIG(S')$. The equivalence preserves locally freeness. In this section, we aim to establish the nonlinear Cartier descent theorem for \emph{smooth} morphisms. In the appendix, we shall extend it to arbitrary morphisms. \\

Our tool is the theory of inseparable morphisms due to T. Ekedahl \cite{E}. We start with recalling the notion of $p$-curvature of a foliation in char $p$. Let $X$ be a non-singular variety over $k$. Let $\sF\subset T_{X/k}$ be a foliation, that is a saturated coherent subsheaf of $T_{X/k}$, which is closed under Lie bracket. $\sF$ is said to be smooth if $T_{X/k}/\sF$ is locally free. Let 
$$
[p]: T_{X/k}\to T_{X/k}, D\mapsto D^p=D\circ\cdots\circ D\ (p\  \textrm{times}).
$$
be the power $p$ map. It is semi-linear and hence induces a linear morphism $[p]: F_X^*T_{X/k}\to T_{X/k}$, where $F_X$ is the absolute Frobenius morhpism. The $p$-curvature of $\sF$ is defined to be the composite morphism
$$
\psi_{\sF}: \sF\to T_{X/k}\stackrel{[p]}{\to} T_{X/k} \to \frac{T_{X}}{\sF}.
$$
$\sF$ is said to be $p$-closed if $\psi_{\sF}=0$. Recall the following basic result due to T. Ekedahl (see also \cite{M}).
\begin{theorem}[Proposition 2.4 \cite{E}]\label{Ekedahl's thm}
Notation as above. Then there is a one-to-one correspondence between the set of $p$-closed smooth foliations on $X$ and the following set of inseparable morphisms of $k$-varieties
$$
X\stackrel{\pi}{\to} Y\stackrel{\pi'}{\to} X'
$$
with $F_{X/k}=\pi'\circ \pi$ and $Y$ non-singular. 
\end{theorem}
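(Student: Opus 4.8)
The plan is to establish the bijection by constructing maps in both directions and checking they are mutually inverse. Given a $p$-closed smooth foliation $\sF \subset T_{X/k}$, the first step is to produce the quotient variety $Y$. Since $\sF$ is a subsheaf of $T_{X/k}$ closed under Lie bracket, I would consider the subsheaf $\sF^\sharp \subset \sO_X$ of functions annihilated by all derivations in $\sF$, i.e. $\sF^\sharp = \{ g \in \sO_X : D(g) = 0 \text{ for all } D \in \sF\}$. The $p$-closedness condition $\psi_{\sF} = 0$ is exactly what guarantees that $\sF$ is stable under the $p$-power operation modulo $\sF$ itself, and one checks that in fact $D^p \in \sF$ for $D \in \sF$; this is the integrability needed so that $\sF^\sharp$ is large enough to serve as the structure sheaf of a quotient. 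I would then set $Y = \mathbf{Spec}_{\sO_{X'}}(\text{appropriate subalgebra})$, or more intrinsically define $Y$ by declaring $\sO_Y = \sF^\sharp$ viewed as a subsheaf of $\sO_X$ containing $\sO_{X'}$ (via $F_{X/k}^\sharp$, since $p$-th powers of local functions are always killed by derivations). The inclusions $\sO_{X'} = F_{X/k}^\sharp(\sO_{X'}) \subset \sF^\sharp \subset \sO_X$ give the two maps $\pi: X \to Y$ and $\pi': Y \to X'$ with $\pi' \circ \pi = F_{X/k}$.

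The second step is to verify that the $Y$ so constructed is non-singular and that $\pi$ is inseparable of the correct degree. Nonsingularity should follow from the smoothness hypothesis on $\sF$ (i.e. $T_{X/k}/\sF$ locally free): locally I would choose coordinates adapted to the foliation so that $\sF$ is spanned by a partial set of the coordinate vector fields, reducing the computation to an explicit local model where $Y$ is visibly smooth and $\pi$ is a composite of Frobenius-type maps in the foliated directions. The rank of $\sF$ determines the degree of $\pi$, and the complementary rank determines the degree of $\pi'$, consistent with $F_{X/k}$ factoring as their composite. One must confirm that $\pi$ and $\pi'$ are genuinely inseparable morphisms of $k$-varieties, which follows because both are sandwiched inside the relative Frobenius.

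For the reverse direction, given an inseparable factorization $X \xrightarrow{\pi} Y \xrightarrow{\pi'} X'$ with $F_{X/k} = \pi' \circ \pi$ and $Y$ non-singular, I would recover the foliation as $\sF = \ker(T_{X/k} \to \pi^* T_{Y/k})$, the relative tangent sheaf $T_{X/Y}$ included into $T_{X/k}$. The key points to check are that this $\sF$ is saturated (so that $T_{X/k}/\sF$ injects into $\pi^* T_{Y/k}$ and is locally free, giving smoothness of the foliation), that it is closed under Lie bracket (automatic, since relative vector fields over $Y$ form a Lie subalgebra), and that its $p$-curvature vanishes. The vanishing of $\psi_{\sF}$ is where the hypothesis that $Y$ is non-singular enters decisively: the $p$-th power of a derivation that is $\sO_Y$-linear-vanishing stays in the same space precisely because $Y$ is smooth, so the induced map to $T_{X/k}/\sF \hookrightarrow \pi^* T_{Y/k}$ is zero.

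The main obstacle I expect is verifying nonsingularity of $Y$ in the forward direction and, dually, the $p$-closedness of $\sF$ in the reverse direction — these are the two nontrivial compatibility conditions, and both hinge on a careful local analysis in Frobenius-adapted coordinates. Establishing that the two constructions are mutually inverse is then a matter of tracking that $\sO_Y = \sF^\sharp$ recovers exactly the functions pulled back from $Y$, and conversely that the kernel foliation of $\pi$ has flat-section sheaf equal to $\sO_Y$; this last identification is essentially the statement that $\pi$ is a torsor-like quotient by the infinitesimal foliation, and I would reduce it to the local model rather than argue abstractly. Since this is precisely Proposition 2.4 of \cite{E}, I would cite that reference for the detailed verification and only indicate the local normal form that makes each step transparent.
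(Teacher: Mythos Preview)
Your outline matches the correspondence sketched immediately after the theorem statement: set $\sO_Y=\Ann(\sF)$ and conversely $\sF=\Ann(\sO_Y)$, then check these are mutually inverse. The paper does not prove the theorem in \S3 but cites Ekedahl, so your plan to defer to \cite{E} for the detailed verification is exactly what the paper does there.

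The paper's Appendix, however, gives an independent proof by a genuinely different route. Rather than working directly in foliation-adapted coordinates, it produces (Lemma~\ref{lemma:appendix local representation of foliation}) a local morphism $f\colon U\to S$ to an affine space with $\sF|_U\cong f^*T_{S/k}$, regards $\sF$ as a transversal foliation on $f$ with vanishing $p$-curvature, and applies the nonlinear Cartier descent (Theorem~\ref{thm:nonlinear Cartier descent in appendix}) to realize $Y$ locally as the descended scheme in a Cartesian square over $F_{S/k}$; smoothness of $Y$ then follows by faithfully flat descent along $F_{S/k}$. For the reverse direction the Appendix invokes Kunz's theorem \cite{KN} to obtain a $p$-basis of $\sO_X$ over $\sO_Y$ and again reduces to such a Cartesian square. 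Your direct local-model approach is closer to Ekedahl's original; the Appendix route buys a proof over any regular base ring and, more to the point of the paper, derives Ekedahl's correspondence \emph{from} Cartier descent rather than using it as input.

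One correction to your reverse direction: $p$-closedness of $\sF=\Ann(\sO_Y)$ is automatic and does not require $Y$ non-singular --- if $D(a)=0$ for all $a\in\sO_Y$ then $D^p(a)=D^{p-1}(D(a))=0$. The non-singularity of $Y$ enters instead in showing that $\sF$ is a \emph{smooth} foliation (i.e.\ that $T_{X/k}/\sF$ is locally free) and that $\Ann(\Ann(\sO_Y))=\sO_Y$; the latter is exactly what the $p$-basis argument in the Appendix addresses.
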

This correspondence may be viewed as an integral version of the inseparable Galois theory introduced by N. Jacobson: One considers the natural action of the tangent bundle on the structure sheaf by derivation:  
$$
T_{X}\times \sO_X\to \sO_X,\  (D,a)\mapsto Da. 
$$
Given a foliation $\sF\subset T_{X/k}$, one defines a chain of finite morphisms $X\to Y\to X'$ by the following inclusions of sheaves of $k$-algebras
$$
\sO_X\supset \sO_Y\supset \sO_{X'},
$$
where 
$$
\sO_Y=\Ann(\sF):=\{a\in \sO_X|\  Da=0, \ \textrm{for}\ \forall D\in \sF\}.
$$
If $\sF$ is $p$-closed, then the degree of $\pi: X\to Y$ is equal to $p^{\rk \sF}$. $Y$ is always normal, and it is smooth if and only if $\sF$ is smooth. Conversely, given a chain of finite morphisms as above, one defines 
$$
\sF=\Ann(Y):=\{D\in T_{X/k}|\ Da=0, \ \textrm{for}\ \forall a\in \sO_Y\}.
$$
One verifies 
$$
\Ann\circ \Ann(\sF)=\sF,\quad \Ann\circ \Ann(Y)=Y.
$$

Let $g: Y\to S'$ be a smooth morphism over $k$. Let $\pi: X:=Y\times_{S'}S\to Y$ be the natural projection appearing in the following Cartesian diagram:
\[
		\xymatrix{ X\ar[d]_{f}\ar[r]^{\pi} &Y\ar[d]^{g} \\
			S\ar[r]^{F_{S/k}} & S'.}
		\]
For brevity, we shall write $Y\times_{S'}S$ by $F^*_{S/k}Y$. Let $f': X'\to S'$ be the base change of $f: X\to S$ via $\sigma: k\to k$. Then there is a commutative diagram of relative Frobenii:

\[
		\xymatrix{ X\ar[d]_{f}\ar[r]^{F_{X/k}} &X'\ar[d]^{f'} \\
			S\ar[r]^{F_{S/k}} & S'.}
		\]

There exists a canonical transversal foliation on $f$ as shown by the next lemma.
\begin{lemma}\label{factorization lemma}
There is a unique finite morphism $\pi': Y\to X'$ such that $\pi'\circ \pi=F_{X/k}$ and $g=f'\circ \pi'$. Let $\sF_{can}$ be the $p$-closed smooth foliation on $X$ by Theorem \ref{Ekedahl's thm}. Then $\sF_{can}$ is transversal with respect to $f$.  
\end{lemma}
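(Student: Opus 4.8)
The plan is to produce $\pi'$ by a direct base-change computation and then read off the foliation. First I would identify the target: since forming the Frobenius twist $(-)'=(-)\times_{k,\sigma}k$ commutes with fibre products, the relation $X=Y\times_{S'}S$ (the given Cartesian square with $g$ and $F_{S/k}$) yields a canonical identification
$$
X'=Y'\times_{S''}S',
$$
where $Y'=Y\times_{k,\sigma}k$, $S''=S'\times_{k,\sigma}k$, the map $Y'\to S''$ is the base change $g'$ of $g$, and $S'\to S''$ is $F_{S'/k}$ (the base change of $F_{S/k}$). Under this identification $f'\colon X'\to S'$ becomes the projection to $S'$. I would then define
$$
\pi':=(F_{Y/k},\,g)\colon Y\longrightarrow Y'\times_{S''}S'=X',
$$
which is well defined because naturality of the relative Frobenius applied to $g\colon Y\to S'$ gives $g'\circ F_{Y/k}=F_{S'/k}\circ g$, i.e.\ the two induced maps $Y\to S''$ agree; by construction $f'\circ\pi'=g$.

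To check $\pi'\circ\pi=F_{X/k}$ I would compare components in $X'=Y'\times_{S''}S'$. Naturality of the relative Frobenius applied to the projections $\pi\colon X\to Y$ and $f\colon X\to S$ gives $p_{Y'}\circ F_{X/k}=F_{Y/k}\circ\pi$ and $p_{S'}\circ F_{X/k}=F_{S/k}\circ f$, while $p_{Y'}\circ(\pi'\circ\pi)=F_{Y/k}\circ\pi$ and $p_{S'}\circ(\pi'\circ\pi)=g\circ\pi$; the latter equals $F_{S/k}\circ f$ precisely because the original square is Cartesian, so the two morphisms agree. For uniqueness I would use that $\pi$, being the base change of the finite faithfully flat morphism $F_{S/k}$ (finite flat since $S$ is regular, by Kunz), is itself finite and faithfully flat, hence an epimorphism of schemes, so any solution of $\pi'\circ\pi=F_{X/k}$ is determined. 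Finiteness of $\pi'$ follows by cancellation: $p_{Y'}$ is finite (a base change of $F_{S'/k}$) and separated, and $p_{Y'}\circ\pi'=F_{Y/k}$ is finite, so the graph factorization through the separated $p_{Y'}$ exhibits $\pi'$ as finite. Since $Y$ is smooth over $S'$ and $S'$ is smooth over $k$, $Y$ is nonsingular, so the chain $X\xrightarrow{\pi}Y\xrightarrow{\pi'}X'$ satisfies the hypotheses of Theorem \ref{Ekedahl's thm}, producing $\sF_{can}=\Ann(\sO_Y)$, which is exactly the relative tangent sheaf $T_{X/Y}\subset T_{X/k}$.

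For transversality I must show the composite $\sF_{can}=T_{X/Y}\hookrightarrow T_{X/k}\to f^*T_{S/k}$ is an isomorphism, equivalently (after dualizing) that the composite of the two natural maps $f^*\Omega_{S/k}\to\Omega_{X/k}\to\Omega_{X/Y}$ is an isomorphism. The crucial observation is that the natural surjection $\Omega_{S/k}\to\Omega_{S/S'}$ is an isomorphism, because the map $F_{S/k}^*\Omega_{S'/k}\to\Omega_{S/k}$ vanishes: $F_{S/k}$ is the relative Frobenius and $d(a^p)=0$. Combining this with the base-change identification $\Omega_{X/Y}\cong f^*\Omega_{S/S'}$ and the functoriality of Kähler differentials for the Cartesian square shows the displayed composite is the pullback of this isomorphism. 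Dualizing, and recalling that $T_{X/Y}$ is automatically involutive, shows that $\sF_{can}$ is transversal to $f$ and hence is the image of an integrable $1$-connection on $f$.

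The main obstacle I anticipate is bookkeeping rather than depth: getting the Frobenius-twist identification $X'=Y'\times_{S''}S'$ and the naturality squares exactly right (so that $\pi'\circ\pi=F_{X/k}$ holds on the nose), and verifying the base-change compatibility in the last step, so that the abstract isomorphism $\Omega_{S/k}\cong\Omega_{S/S'}$ genuinely computes the composite $f^*\Omega_{S/k}\to\Omega_{X/Y}$. As a reassurance one can always fall back on étale coordinates $s_1,\dots,s_d$ on $S$, for which $X=Y[s_1,\dots,s_d]/(s_i^p-c_i)$ over $Y$ and $T_{X/Y}=\bigoplus_i\sO_X\,\partial_{s_i}$ maps onto the basis $\partial_{s_i}$ of $f^*T_{S/k}$ by the identity.
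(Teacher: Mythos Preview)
Your proof is correct, and it takes a genuinely different route from the paper's. The paper constructs $\pi'$ by applying Ekedahl's correspondence in the \emph{other} direction: it quotients $Y$ by the $p$-closed smooth foliation $T_{Y/S'}\subset T_{Y/k}$ to obtain $\pi'\colon Y\to Z$, and then argues by a degree count (comparing $p^{\rk T_{Y/S'}+\rk T_{S/k}}$ with $p^{\dim X}$) that $Z=X'$ and $\pi'\circ\pi=F_{X/k}$. Uniqueness is settled at the generic point via the chain of field inclusions. For transversality, the paper checks the fibrewise criterion and, since the problem is local, simply reduces to the model case $g\colon \A_k^m\times\A_k^r\to\A_k^m$.

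Your argument is more elementary and intrinsic: you produce $\pi'$ directly as $(F_{Y/k},g)$ into the fibre-product description $X'=Y'\times_{S''}S'$, verify $\pi'\circ\pi=F_{X/k}$ by naturality of relative Frobenius on the two projections, and obtain uniqueness from $\pi$ being a faithfully flat epimorphism. Your transversality argument, identifying $\sF_{can}=T_{X/Y}$ and then using $\Omega_{S/k}\cong\Omega_{S/S'}$ together with base change $\Omega_{X/Y}\cong f^*\Omega_{S/S'}$, is precisely the mechanism exploited later in the paper's Appendix (see the splitting \eqref{eq:isomorphism of diffs}--\eqref{eq:appendix splitting of tangent bundles}), where this viewpoint is used to drop the smoothness hypothesis on $g$. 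So your approach avoids invoking Ekedahl's theorem in the construction step and anticipates the more general treatment; the paper's approach has the virtue of keeping the whole discussion inside the foliation framework and making the degree bookkeeping explicit.
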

Let $\nabla_{can}: f^*T_{S/k}\to T_{X/k}$ be the associated connection to $\sF_{can}$: The composite $\alpha: \sF_{can}\subset T_{X/k}\to f^*T_{S/k}$ is an isomorphism. Set $\nabla_{can}=\iota\circ\alpha^{-1}$, where $\iota$ is the natural inclusion of $\sF_{can}$ into $T_{X/k}$. 
\begin{proof}
Uniqueness is clear: Over the generic point, the morphism $\pi'$ is determined by the field inclusion $k(X')\subset k(Y)\subset k(X)$. To construct $\pi'$, we consider the quotient of $Y$ by the $p$-closed smooth foliation $T_{Y/S'}\subset T_{Y/k}$. So by Theorem \ref{Ekedahl's thm}, one obtains an inseparable morphism $\pi': Y\to Z$ where $\sO_Z\subset \sO_Y$ is the $\sO_{S'}$-subalgebra annihilated by $T_{Y/S'}$ (in particular, $\pi'$ is an $S'$-morphism). Since $Z$ is non-singular and the degree of the composite of inseparable morphisms $\pi'\circ \pi$ equals $p^{\rk(T_{Y/S'})+\rk(T_{S/k})}=p^{\dim(X)}$, it follows from Theorem \ref{Ekedahl's thm} that $Z=X'$ and $\pi'\circ \pi=F_{X/k}$. Also, the structural morphism $X'=Z\to S'$ is easily seen to be $f'$, the base change of $f$ by $\sigma$. It remains to show that $\sF_{can}$ is transversal. As $\sF_{can}$ and $f^*T_{S/k}$ are both locally free, the composite 
$$\sF_{can}=\Ann(\sO_Y)\to T_{X/k}\to f^*T_{S/k}$$ is an isomorphism if and only if for any closed point $x\in X$, the induced morphism on the fiber $$\sF_{can}(x)\to f^*T_{S/k}(x)=T_{S/k}(f(x))$$ is an isomorphism. Since the problem is strictly local, we may simply assume that $g: Y\to S'$ is given by the projection $\A_k^{m}\times \A_k^{r}\to \A_k^m$. In this case, the verification step is trivial. 
\end{proof}
Conversely, we are given a smooth morphism $f: X\to S$, together with a $p$-closed transversal foliation $\sF\subset T_{X/k}$. Using Theorem \ref{Ekedahl's thm}, we obtain a chain of finite morphisms $X\stackrel{\pi}{\to}Y \stackrel{\pi'}{\to}X'$ with $\pi'\circ \pi=F_{X/k}$. Let $g: Y\to S'$ be the composite $f'\circ \pi'$. By the universal property of Cartesian product, the map $\pi: X\to Y$ factorizes as the composite of a canonical morphism
$$
\alpha: X\to F_{S/k}^*Y
$$
with the natural projection $F_{S/k}^*Y\to Y$. Note that the degree of $\pi$ equals $p^{\dim S}$, as the rank of $\sF$ is $\dim S$ by transversality. Since the degree of the natural projection $F_{S/k}^*Y\to Y$ is also $p^{\dim S}$, it follows that $\deg \alpha=1$ and hence $\alpha$ is an isomorphism. Therefore $X=F_{S/k}^*Y$. In other words, we obtain the following commutative diagram
\[
		\xymatrix{  X\ar[d]_{f}\ar[r]^{\pi} &Y \ar[d]^{g}\ar[r]^{\pi'}&X'\ar[dl]^{f'} \\
			 S\ar[r]^{F_{S/k}} & S',&}
		\]
so that the left square is Cartesian. Since $f$ is smooth and $F_{S/k}$ is faithfully flat, it follows that $g$ is smooth (\cite[Corollaire 17.7.3 (ii)]{EGA IV}). Clearly, the above two constructions are converse to each other. 

\begin{theorem}\label{equivalence in the case of vanishing curvature}
The above construction defines an equivalence of categories between $\NMIC^s_0(S)$ and $\NHIG^s_0(S')$.
\end{theorem}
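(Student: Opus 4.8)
The plan is to promote the bijection on objects, already established above, to an equivalence of categories by analysing the two Hom-functors; since the two constructions are manifestly mutually inverse on objects, everything comes down to a natural bijection on morphism sets. First I would translate the two morphism conditions into foliation-theoretic terms. In $\NHIG^s_0(S')$ every Higgs field vanishes, so the condition $\bar h^*\theta_2=\bar h_*\circ\theta_1$ is vacuous and a morphism $g_1\to g_2$ is simply an $S'$-morphism $\bar h\colon Y_1\to Y_2$. Writing $\sF_i=\mathrm{im}(\nabla_i)\subset T_{X_i/k}$ for the transversal foliation attached to $(f_i,\nabla_i)$, I claim the condition $h^*\nabla_2=h_*\circ\nabla_1$ defining a morphism in $\NMIC^s_0(S)$ is equivalent to $h_*(\sF_1)\subseteq h^*\sF_2$. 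Necessity is clear by comparing images; for sufficiency one uses transversality, which makes the projection $h^*T_{X_2/k}\to f_1^*T_{S/k}$ restrict to an isomorphism on $h^*\sF_2$, so that two sections of $h^*\sF_2$ with the same (automatically equal) projection must coincide. Thus it suffices to produce a natural bijection
\[
\Hom_{\NMIC^s_0(S)}\big((f_1,\nabla_1),(f_2,\nabla_2)\big)\ \cong\ \Hom_{S'}(Y_1,Y_2),
\]
where $Y_i$ denotes the Ekedahl quotient of $(X_i,\sF_i)$.

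For the functor $C\colon\NMIC^s_0(S)\to\NHIG^s_0(S')$, given a foliation-preserving $S$-morphism $h$ I would descend it to the quotients via the identification $\sO_{Y_i}=\Ann(\sF_i)$. The key computation is the chain-rule identity $D(h^{\#}a)=(h_*D)(a)$ for $D\in T_{X_1/k}$ and $a\in\sO_{X_2}$, where $h_*D$ is read as a section of $h^*T_{X_2/k}$ acting on $\sO_{X_2}$ through $h^{\#}$. If $a\in\sO_{Y_2}=\Ann(\sF_2)$ and $D\in\sF_1$, then $h_*D\in h^*\sF_2$ annihilates $a$, whence $D(h^{\#}a)=0$; so $h^{\#}$ carries $\sO_{Y_2}$ into $\Ann(\sF_1)=\sO_{Y_1}$ and defines $\bar h\colon Y_1\to Y_2$, which is an $S'$-morphism because $g_i=f'_i\circ\pi'_i$. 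Functoriality in $h$ is immediate from functoriality of the tangent map and of the annihilator.

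For the inverse functor $C^{-1}$, given an $S'$-morphism $\bar h\colon Y_1\to Y_2$ I would base change along $F_{S/k}$, setting $h=\bar h\times_{S'}\id_S\colon X_1=F^*_{S/k}Y_1\to X_2=F^*_{S/k}Y_2$, an $S$-morphism fitting in a commutative square over $\pi_i\colon X_i\to Y_i$. Since the canonical foliation $\sF_{can,i}=\Ann(\sO_{Y_i})$ is exactly the relative tangent sheaf $T_{X_i/Y_i}$, compatibility of $h$ with $\pi_1,\pi_2$ forces $h_*(\sF_{can,1})\subseteq h^*\sF_{can,2}$, so $h$ is a morphism in $\NMIC^s_0(S)$. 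The assignments $h\mapsto\bar h$ and $\bar h\mapsto h$ undo one another on underlying morphisms—descent along $\sO_{Y_i}=\Ann(\sF_i)$ versus base change along $F_{S/k}$—which yields the desired natural Hom-bijection. Combined with the object-level bijection recalled above and the preservation of smoothness (faithfully flat descent along $F_{S/k}$, \cite[Corollaire 17.7.3 (ii)]{EGA IV}), this establishes the equivalence $C\colon\NMIC^s_0(S)\rightleftarrows\NHIG^s_0(S')\colon C^{-1}$.

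The step I expect to be the main obstacle is exactly this translation between the differential morphism condition and algebraic descent to the inseparable quotients: one must check that $h_*(\sF_1)\subseteq h^*\sF_2$ is not only necessary but sufficient both for descent and for the equality of connections, and that the annihilator description of $\sF_{can}$ behaves correctly under the base change $F_{S/k}$. Both rest on working with smooth, hence locally split, transversal foliations, so that $\sF_i$ and its pullbacks remain subbundles of the ambient tangent sheaves; it is precisely the breakdown of this subbundle property for non-smooth $f$ that the appendix is designed to circumvent.
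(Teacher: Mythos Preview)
Your proposal is correct and follows essentially the same approach as the paper's proof: both verify the functor $C^{-1}$ on morphisms via the chain-rule identity $\tilde h_*(D_1)(a_2)=D_1(\tilde h^*a_2)=0$ for $D_1\in\sF_1$ and $a_2\in\sO_{Y_2}$, and treat $C$ symmetrically. Your write-up is somewhat more explicit than the paper's---you spell out the foliation-theoretic reformulation $h_*(\sF_1)\subseteq h^*\sF_2$ of the morphism condition and work through the descent direction in detail, whereas the paper does only the $C^{-1}$ direction and dispatches $C$ with ``similarly''---but the underlying argument is the same.
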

\begin{proof}
Let $h: Y_1\to Y_2$ be a morphism in $\NHIG^s_0(S')$. Pulling it back via $F_{S/k}$, one obtains the $S$-morphism
$\tilde h:=F_{S/k}^*h: F_{S/k}^*Y_1\to F_{S/k}^*Y_2$. Set $X_i=F_{S/k}^*Y_i$ and $F_i\subset T_{X_i}$ the transversal foliation corresponding to $\nabla_{can}$. To see that the composite map 
$$
F_1\to T_{X_1}\stackrel{\tilde h_*}{\to} \tilde h^*T_{X_2}
$$ factors through $\tilde h^*F_2\to \tilde h^*T_{X_2}$, it suffices to verify that for any $D_1\in F_1$ and any $a_2\in \sO_{Y_2}$, $\tilde h_*(D_1)(a_2)=0$. But this is clear, since 
$$
\tilde h_*(D_1)(a_2)=D_1(\tilde h^*a_2)=0.
$$
Thus the Frobenius pullback gives rise to a functor 
$$
C^{-1}: \NHIG^s_0(S')\to \NMIC^s_0(S).
$$
Similarly, one verifies that the converse construction maps a morphism in $\NMIC^s_0(S)$ to a morphism in $\NHIG^s_0(S')$ and hence a quasi-inverse functor
$$
C: \NMIC^s_0(S)\to \NHIG^s_0(S').
$$
\end{proof}

Following the line of arguments in Proposition \ref{flat/higgs bundles are examples of foliated/higgs varieties}, we are going to show the functor $G$ introduced there restricts to a functor $\MIC^{lf}_0(S)\to \NMIC^s_0(S)$.
\begin{lemma}\label{functor G}
Let $\nabla: \sV\to \sV\otimes \Omega_{S}$ be a flat bundle over $S$ and $\sF\subset T_{V/k}$ be the associated transversal foliation on $\pi: V\to S$. Then $\psi_{\nabla}=0$ if and only if $\psi_{\sF}=0$.
\end{lemma}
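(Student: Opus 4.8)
The plan is to reduce to the local affine situation set up in the proof of Proposition~\ref{flat/higgs bundles are examples of foliated/higgs varieties} and then compute directly the $p$-th power of the canonical vector fields $D_i$. Since the vanishing of either $p$-curvature is a local condition on $S$, I may assume $T=\mathrm{Spec}(k)$, $S=\mathrm{Spec}(B)$ with coordinates $s_1,\dots,s_m$ and dual derivations $\partial/\partial s_i$, and that $\sV$ is the sheafification of a finitely generated $B$-module $M$ with generators $a_1,\dots,a_r$ and connection coefficients $\nabla_{\partial/\partial s_i}(a_j)=\sum_k b^i_{jk}a_k$, $b^i_{jk}\in B$. With $C=S_B(M)$, the foliation $\sF\subset \mathrm{Der}_k(C)=T_{V/k}$ is the free $C$-module on the derivations $D_1,\dots,D_m$, where $D_i(s_j)=\delta_{ij}$ and $D_i(a)=\nabla_{\partial/\partial s_i}(a)$ for $a\in M$.

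First I would record the two $p$-curvatures in these coordinates. On the linear side, since $(\partial/\partial s_i)^p=0$ as a derivation of $B$, one has $\psi_\nabla(\partial/\partial s_i)=\pm(\nabla_{\partial/\partial s_i})^p$, so that $\psi_\nabla=0$ if and only if $(\nabla_{\partial/\partial s_i})^p=0$ on $M$ for every $i$; as $(\nabla_{\partial/\partial s_i})^p$ is $\sO_S$-linear and the $a_l$ generate $M$, this is equivalent to $(\nabla_{\partial/\partial s_i})^p(a_l)=0$ for all $i,l$. On the nonlinear side, recall from Proposition~\ref{pseduo-smooth morphisms result} the splitting $\mathrm{Der}_k(C)=\sF\oplus \mathrm{Der}_B(C)$. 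Applying a derivation $E$ to $s_l$ reads off its $\sF$-component as $\sum_l E(s_l)D_l$, so $\psi_\sF(D_i)$ is represented by the $\mathrm{Der}_B(C)$-component of $D_i^p$.

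The crux, which I would isolate as the key step, is that $D_i^p$ lands automatically in the vertical part $\mathrm{Der}_B(C)$: since $D_i(s_j)=\delta_{ij}$ is a constant, $D_i^p(s_j)=0$, so the $\sF$-component of $D_i^p$ vanishes and $\psi_\sF(D_i)=0$ if and only if $D_i^p=0$ outright. It then remains to evaluate the $B$-linear derivation $D_i^p$ on the generators $a_l$. Here I would prove by induction that $D_i^n(a_l)=(\nabla_{\partial/\partial s_i})^n(a_l)$ for all $n$: the base case is the definition of $D_i$, and the inductive step is precisely the Leibniz compatibility verified in Proposition~\ref{flat/higgs bundles are examples of foliated/higgs varieties}, using $\nabla_{\partial/\partial s_i}(M)\subseteq M$. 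In particular $D_i^p(a_l)=(\nabla_{\partial/\partial s_i})^p(a_l)$.

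Assembling these, $\psi_\sF=0$ iff $D_i^p=0$ for all $i$ iff $(\nabla_{\partial/\partial s_i})^p(a_l)=0$ for all $i,l$ iff $\psi_\nabla=0$, which is the asserted equivalence. I expect the only genuinely delicate point to be the bookkeeping in the identity $D_i^n=(\nabla_{\partial/\partial s_i})^n$ on $M$, where one must track that the $C$-derivation $D_i$ differentiates the coefficients $b^i_{jk}\in B$ exactly as $\partial/\partial s_i$ does; everything else is linear and formal. Finally, since $\sF$, $T_{V/k}$, and both $p$-curvatures are compatible with the gluing of local charts established in Proposition~\ref{flat/higgs bundles are examples of foliated/higgs varieties}, the local equivalence globalizes with no further argument.
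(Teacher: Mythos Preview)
Your proof is correct and follows essentially the same route as the paper's: both reduce to the local chart of Proposition~\ref{flat/higgs bundles are examples of foliated/higgs varieties}, compute $D_i^p$ inductively in terms of the iterates $(\nabla_{\partial/\partial s_i})^n$ on $M$, observe that $D_i^p$ lands in $T_{V/S}$ because $(\partial/\partial s_i)^p=0$, and conclude via the isomorphism $T_{V/S}\cong T_{V/k}/\sF$. The only point you leave implicit that the paper makes explicit is the $p$-semilinearity of $\psi_\sF$, which is what lets you pass from $\psi_\sF(D_i)=0$ for all $i$ to $\psi_\sF=0$; you should state it, but it is routine.
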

\begin{proof}
The proof relies on a comparison of local expressions of $\psi_{\nabla}$ and $\psi_{\sF}$. We use the notations in Proposition \ref{flat/higgs bundles are examples of foliated/higgs varieties}. Since $\sV$ is locally free, we may take $\{a_1,\cdots,a_r\}$ to be a local basis, and identify the dual basis $\{a^*_i\}$ with a set of local basis for $T_{V/S}$. Let $\frac{\partial}{\partial s}=\frac{\partial}{\partial {s_i}}$ and let $B=(b_{jk})$ be the connection matrix $\nabla_{\frac{\partial}{\partial s}}$ with respect to $\{a_j\}$. That is 
$$
\nabla_{\frac{\partial}{\partial s}}(a_j)=\sum_kb_{jk}a_k.
$$
Then 
$$
\psi_{\nabla}(\frac{\partial}{\partial s}\otimes 1)=(\nabla_{\frac{\partial}{\partial s}})^p=\nabla_{\frac{\partial}{\partial s}}\circ\cdots \circ\nabla_{\frac{\partial}{\partial s}} \ (p\  \textrm{times}).
$$ 
On the other hand, the corresponding $\psi_{\sF}(\frac{\partial}{\partial s}\otimes 1)$ is the image modulo $\sF$ of $D:=\frac{\partial}{\partial s}\otimes 1+\sum_j(\nabla_{\frac{\partial}{\partial s}}a_j) a_j^*$ under the power $p$ map. Write for $1\leq i\leq p$,
$$
(\nabla_{\frac{\partial}{\partial s}})^i(a_j)=\sum_k(B_i)_{jk}a_k,
$$
where $B_1=B$. Then a simple induction shows that 
$$
D^i=(\frac{\partial}{\partial s})^i\otimes 1+\sum_{j,k}(B_i)_{jk}a_ka^*_j.
$$ 
However, as $(\frac{\partial}{\partial s})^p=0$, we get the expression 
$$
D^p=\sum_{j,k}(B_p)_{jk}a_ka^*_j\in T_{V/S}. 
$$
Clearly, $\psi_{\nabla}(\frac{\partial}{\partial s}\otimes 1)$ and $D^p$ encode exactly the same information. Since the composite $T_{V/S}\to T_{V/k}\to T_{V/k}/\sF$ is an isomorphism, it follows that $\psi_{\nabla}(\frac{\partial}{\partial s}\otimes 1)=0$ if and only if $D^p\equiv 0 \mod\ \sF$. Finally, both $\psi_\nabla$ and $\psi_{\sF}$ are $p$-linear (see Lemma \ref{basic property of p-curvature} below), hence $\psi_{\nabla}=0$ if and only if $\psi_{\sF}=0$.  
\end{proof}
So we may embed $\MIC^{lf}_0(S)$ by $G$ as a subcategory of $\NMIC^s_0(S)$.  
\begin{proposition}
Notation as above. Then there is a commutative diagram of functors:
\[
		\xymatrix{ \MIC^{lf}_0(S)\ar[r]^{C} \ar[d]_{G} &\HIG^{lf}_0(S')\ar[d]^{G} \\
			\NMIC^s_0(S)\ar[r]^{C} & \NHIG^s_0(S'),}
		\]
where the top functor is the Cartier descent. 
\end{proposition}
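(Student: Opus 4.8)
The plan is to verify commutativity on objects, constructing a natural isomorphism between the two composite functors, and then observe that it respects morphisms. Fix $(\sV,\nabla)\in\MIC^{lf}_0(S)$. Along the top-right path, classical Cartier descent gives $C(\sV,\nabla)=\sE:=\sV^{\nabla=0}$, a locally free $\sO_{S'}$-module (locally freeness being preserved), and $G$ then produces the linear scheme $\pi':E:=\mathbf{Spec}_{S'}S(\sE)\to S'$ with zero Higgs field, an object of $\NHIG^s_0(S')$. Along the left-bottom path, $G(\sV,\nabla)=(\pi:V:=\mathbf{Spec}_S S(\sV)\to S,\nabla_\pi)$, whose associated foliation $\sF:=\mathrm{im}(\nabla_\pi)$ is $p$-closed by Lemma \ref{functor G}; applying the bottom $C$ of Theorem \ref{equivalence in the case of vanishing curvature} yields $Y:=\mathbf{Spec}(\Ann(\sF))\to S'$ with zero Higgs field. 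The goal is an $S'$-isomorphism $Y\cong E$, natural in $(\sV,\nabla)$.

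The key input is that Cartier descent furnishes a canonical isomorphism $F_{S/k}^*\sE\cong\sV$ carrying the canonical connection to $\nabla$, under which $\sV^{\nabla=0}$ corresponds to $1\otimes\sE$. Since the symmetric algebra commutes with base change, $F_{S/k}^*E=E\times_{S'}S=\mathbf{Spec}_S S(F_{S/k}^*\sE)\cong\mathbf{Spec}_S S(\sV)=V$. Hence $V$ is the Frobenius pullback of $E$, and the projection $F_{S/k}^*E\to E$ gives a natural $S'$-morphism $V\to E$; equivalently an inclusion $\sO_E\hookrightarrow\sO_V$.

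The main step, and the expected obstacle, is to identify the two foliations, i.e. to show $\sF=\Ann(\sO_E)$, so that $Y=\mathbf{Spec}(\Ann(\sF))$ recovers $E$. I would argue this locally, in the notation of Proposition \ref{flat/higgs bundles are examples of foliated/higgs varieties}: the generators $D_i=\nabla_\pi(\partial/\partial s_i)$ restrict on $\sO_S$ to $\partial/\partial s_i$, which annihilate $\sO_{S'}$ (the Frobenius constants, as $\partial/\partial s_i$ kills $p$-th powers), and they kill $\sE=\sV^{\nabla=0}$ because $D_i(a)=\nabla_{\partial/\partial s_i}(a)=0$ for flat $a$. As $\sO_E$ is generated over $k$ by $\sO_{S'}$ together with $\sE$, each $D_i$ annihilates $\sO_E$, giving $\sF\subseteq\Ann(\sO_E)$. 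Both are saturated transversal foliations of rank $\dim S$, so the inclusion is an equality; alternatively, $[\sO_V:\sO_E]=p^{\dim S}=p^{\rk\sF}=[\sO_V:\sO_Y]$ together with $\sO_E\subseteq\sO_Y\subseteq\sO_V$ forces $\sO_Y=\sO_E$. Either way $Y\cong E$ over $S'$, both carrying the zero Higgs field.

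Finally, I would assemble naturality. Every identification used---Cartier's $F_{S/k}^*\sE\cong\sV$, the compatibility of $S(-)$ with pullback, and the formation of $\Ann$---is natural in the input, so the isomorphisms $Y\cong E$ organize into a natural isomorphism $C\circ G\cong G\circ C$. Compatibility with morphisms is routine: a horizontal map $\phi:\sV_1\to\sV_2$ induces $\phi^{\nabla=0}:\sE_1\to\sE_2$, hence compatible maps of symmetric algebras, linear schemes, and foliations, so the square of morphisms commutes. The only genuinely delicate point is the foliation identification above, which rests on the Cartier-theoretic fact that flat sections are precisely the functions pulled back from the quotient $E$, upgraded from an inclusion to an equality by the degree count.
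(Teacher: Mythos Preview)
Your proof is correct and essentially the same as the paper's, with the minor difference that you verify $C\circ G\cong G\circ C$ starting from $(\sV,\nabla)\in\MIC^{lf}_0(S)$, while the paper checks the equivalent identity $G\circ C^{-1}=C^{-1}\circ G$ starting from $\sE\in\HIG^{lf}_0(S')$. Both arguments reduce to the same local computation: the foliation $\sF=\mathrm{im}(\nabla_\pi)$ on $V=F_{S/k}^*E$ is locally spanned by $\{\partial/\partial s_i\otimes 1\}$, which coincides with $\Ann(\sO_E)$; your rank/degree argument to upgrade the inclusion to an equality is a clean alternative to the paper's direct local identification.
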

\begin{proof}
For a locally free $\sO_{S'}$-module $\sE$ of finite rank, we are going to show
$$
G\circ C^{-1}(\sE,0)=C^{-1}\circ G(\sE,0).
$$
Let $E$ the geometric vector bundle $g: E\to S'$ associated to $\sE$. Clearly, the geometric vector bundle associated to $F_{S/k}^*\sE$ over $S$ is naturally isomorphic to $f: V=F_{S/k}^*E\to S$, the base of $g$ via $F_{S/k}$. To show canonical connections from two constructions coincide, we shall express them locally. Over $V'\subset S'$, we take a local basis $\{f_1,\cdots,f_r\}$ of $\sE|_{V'}$. Then $\{e_j=f_j\otimes 1\}$ forms a local basis for $F_{S/k}^*\sE|_U$. The local expression of the canonical connection on $F_{S/k}^*\sE$ is determined by $\nabla_{can}(e_j)=0$ for any $j$. Therefore, the corresponding transversal foliation on $f$ is locally given by the linear span of
$\{\frac{\partial}{\partial{s_1}}\otimes 1,\cdots, \frac{\partial}{\partial{s_m}}\otimes 1\}$. On the other hand, the canonical transversal foliation on $f$ constructed in Proposition \ref{equivalence in the case of vanishing curvature} is characterized by  
$\Ann(\sO_{E})\subset T_{V/k}$. As $\sO_E\subset \sO_V=\sO_E\otimes_{\sO_{S'},F^*_{S/k}}\sO_S$, it is clear that $\Ann(\sO_{E})|_{U}$ is also given the linear span of $\{\frac{\partial}{\partial{s_1}}\otimes 1,\cdots, \frac{\partial}{\partial{s_m}}\otimes 1\}$. So they are the same connection on $f$.   
\end{proof}
Therefore the nonlinear Cartier descent theorem extends the classical Cartier descent theorem in a natural way. 
\begin{remark}
In \S\ref{Appendix}, we give a different treatment of the nonlinear Cartier descent theorem. Consequently, the smoothness condition will be removed. Interestingly, the roles of Ekedahl's correspondence and the nonlinear Cartier descent will be switched therein. 
\end{remark}

\section{Local nonlinear Hodge correspondence}
In this section, we make the following assumption.
\begin{assumption}\label{assumption of scheme and frobenius lifting}
We say the pair $(S, F_{S/k})$ is $W_2(k)$-liftable, if the following condition is satisfied: There exists a smooth $W_2(k)$ scheme $\tilde S$ whose mod $p$ reduction is isomorphic to $S$. Fix a $k$-isomorphism $\alpha: \tilde S\times_{W_2(k)}k\cong S$. Then there exists a $W_2(k)$-morphism of schemes $\tilde F: \tilde S\to \tilde S$ whose mod $p$ reduction is $F_{S/k}$ under the isomorphism induced by $\alpha$.     
\end{assumption}
When the above assumption is satisfied for $S$, we say briefly that $S$ is \emph{local}. Affines are local, but projective curves of genus $\geq 2$ are not local. For local $S$, Ogus-Vologodsky \cite{OV} establishes an equivalence between the category of coherent modules with integrable connections whose $p$-curvatures are nilpotent and the category of coherent Higgs modules whose Higgs fields are nilpotent. We aim to generalize this correspondence to the nonlinear setting. In the following discussions, we fix an $W_2(k)$-lifting $\tilde S$ of $S$ and a lifting of $F=F_{S/k}$
$$
\tilde F: \tilde S\to \tilde S':=\tilde S\times_{W_2(k),W_2(\sigma)}W_2(k).
$$   

Let $(f,\nabla)$ be a transversal foliated $S$-variety. Set $\sF_{\nabla}=\textrm{im}(\nabla)\subset T_{X/k}$. As $\sF_{\nabla}$ splits the short exact sequence
$$
0\to T_{X/S}\to T_{X/k}\to f^*T_{S/k}\to 0,
$$
the $p$-curvature gives rise to a map 
$$
\psi_{\nabla}:=\psi_{\sF_{\nabla}}: f^*T_{S/k}\to T_{X}/\sF_{\nabla}\cong T_{X/S}.
$$
\begin{lemma}\label{basic property of p-curvature}
The following properties hold for $\psi_{\nabla}$:
\begin{itemize}
    \item [(i)] $\psi_\nabla$ is semi-linear;
    \item [(ii)] $\psi_\nabla$ is integrable;
    \item [(iii)] $[\psi(D_1),\nabla(D_2)]=0$ for any $D_1,D_2\in f^*T_{S/k}$.
\end{itemize}
\end{lemma}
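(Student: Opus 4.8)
The plan is to reduce all three assertions to a computation in a local frame and to feed that computation into two classical identities in the restricted Lie algebra of derivations. Since the statements are local on $X$ and $S$ is smooth, I would choose an \'etale-local coordinate system $s_1,\dots,s_m$ on $S$, write $\partial_i=\partial/\partial s_i$, and set $\nabla_i:=\nabla(f^*\partial_i)\in T_{X/k}$. The $\nabla_i$ form a local frame of $\sF_\nabla=\mathrm{im}(\nabla)$, and together with any local frame of $T_{X/S}$ they give a local frame of $T_{X/k}$ adapted to the decomposition $T_{X/k}=T_{X/S}\oplus\sF_\nabla$. Everything then follows from two formulas valid for a derivation $D$ of $\sO_X$ and $a\in\sO_X$ in characteristic $p$: Hochschild's formula $(aD)^p=a^pD^p+(aD)^{p-1}(a)\,D$, and Jacobson's formula $\mathrm{ad}(D^p)=(\mathrm{ad}\,D)^p$.

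First I would record two local facts. (A) \emph{Integrability of $\nabla$ is equivalent to $[\nabla_i,\nabla_j]=0$.} Each $\nabla_i$ is $f$-related to $\partial_i$, so $[\nabla_i,\nabla_j]$ is $f$-related to $[\partial_i,\partial_j]=0$; a one-line check on $f^*\sO_S$ shows this forces $[\nabla_i,\nabla_j]\in T_{X/S}$. On the other hand the curvature of the $1$-connection is, by definition, the image of $[\nabla_i,\nabla_j]$ in $T_{X/k}/\sF_\nabla$, so integrability gives $[\nabla_i,\nabla_j]\in\sF_\nabla$; since $T_{X/S}\cap\sF_\nabla=0$, we conclude $[\nabla_i,\nabla_j]=0$. (B) \emph{$\psi_\nabla(f^*\partial_i)=\nabla_i^p$.} Because $(\partial/\partial s_i)^p=0$ in $T_{S/k}$ and $\nabla_i$ is $f$-related to $\partial_i$, iterating relatedness gives $\nabla_i^p(f^*g)=f^*(\partial_i^p g)=0$ for $g\in\sO_S$; hence $\nabla_i^p\in T_{X/S}$, and its class in $T_{X/k}/\sF_\nabla\cong T_{X/S}$ is $\nabla_i^p$ itself, which is exactly $\psi_{\sF_\nabla}(\nabla_i)$ transported through $\nabla$.

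With these in hand the three properties follow. For (i) I would apply Hochschild's formula to $D=\nabla(v)\in\sF_\nabla$: the correction term $(aD)^{p-1}(a)\,D$ is a multiple of $D$, hence lies in $\sF_\nabla$, so modulo $\sF_\nabla$ we get $\psi_{\sF_\nabla}(aD)=a^p\psi_{\sF_\nabla}(D)$, i.e. $\psi_\nabla(a\,v)=a^p\psi_\nabla(v)$. For (iii) and (ii) I would use Jacobson's formula together with (A): from $[\nabla_i,\nabla_j]=0$ we get $[\nabla_i^p,\nabla_j]=(\mathrm{ad}\,\nabla_i)^p(\nabla_j)=0$, which by (B) is precisely $[\psi_\nabla(f^*\partial_i),\nabla(f^*\partial_j)]=0$, giving (iii) on the frame; applying Jacobson once more to the now-commuting pair $(\nabla_i^p,\nabla_j)$ yields $[\nabla_i^p,\nabla_j^p]=0$, i.e. $[\psi_\nabla(f^*\partial_i),\psi_\nabla(f^*\partial_j)]=0$, which is (ii) on the frame.

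The delicate point—and the step I expect to require the most care—is passing from the frame to arbitrary sections of $f^*T_{S/k}$, since $\psi_\nabla$ is only Frobenius-semilinear. For (i) and (ii) this is harmless: semilinearity makes the pairing $(D_1,D_2)\mapsto[\psi_\nabla(D_1),\psi_\nabla(D_2)]$ genuinely tensorial (the Leibniz corrections involve $\psi_\nabla(D)(a^p)=0$), so vanishing on the frame gives vanishing everywhere. For (iii), however, $[\psi_\nabla(D_1),\nabla(bD_2)]=b[\psi_\nabla(D_1),\nabla(D_2)]+\psi_\nabla(D_1)(b)\,\nabla(D_2)$ carries a genuine derivation term, and the bracket of arbitrary sections is a (generally nonzero) element of $\sF_\nabla$; thus (iii) must be read as the vanishing of this bracket on a frame, equivalently modulo $\sF_\nabla$. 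I would make this scope explicit and check that $(D_1,D_2)\mapsto[\psi_\nabla(D_1),\nabla(D_2)]\bmod\sF_\nabla$ is tensorial, so that the frame computation suffices. The underlying inputs—Hochschild's and Jacobson's identities and the $f$-relatedness bookkeeping—are standard, and I would invoke rather than reprove them.
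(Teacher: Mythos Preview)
Your approach is essentially the same as the paper's: both reduce to a local frame, observe that integrability plus transversality force $[\nabla_i,\nabla_j]=0$ (not just zero modulo $\sF_\nabla$), and then deduce (ii) and (iii) on the frame from $\nabla_i^p\in T_{X/S}$ together with commutativity; for (i) the paper simply cites Ekedahl, while you invoke Hochschild's formula, which is the content of that citation. Your caution about (iii)---that for arbitrary sections the bracket only vanishes modulo $\sF_\nabla$ because of the $\psi_\nabla(D_1)(b)\,\nabla(D_2)$ term---is in fact sharper than the paper's own reduction, which silently drops exactly that term when expanding $[\psi_\nabla(D_1),\nabla(D_2)]$.
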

By the lemma, the $p$-curvature of an integrable connection $\nabla$ on $f$ defines an $\sO_X$-linear morphism
$$
\psi_\nabla: F_X^*f^*T_{S/k}\to T_{X/S}, \quad [\psi_{\nabla},\psi_{\nabla}]=0.
$$
where $F_X$ is the absolute Frobenius of $X$. It is a nonlinear $F$-Higgs field on $f$. 
\begin{proof}
The first property holds for a general foliation on $X$. See e.g. \cite[Lemma 4.2 (ii)]{E}. Let $V\subset S$ be an open affine subset with a set of local coordinates $\{s_1,\cdots,s_m\}$. We need to show for any $D_1,D_2\in f^*T_{V/k}$, $[\psi_{\nabla}(D_1\otimes 1),\psi_{\nabla}(D_2\otimes 1)]=0$. By the $p$-linearity in (i), it suffices to show the vanishing for $D_1=\frac{\partial}{\partial{s_i}}, D_2=\frac{\partial}{\partial{s_j}}$ for any $i,j$. As $D_i^p=0$ in this case, it follows that 
$$
[\psi_{\nabla}(D_1),\psi_{\nabla}(D_2)]=[\nabla(D_1)^p,\nabla(D_2)^p].
$$
Now a simple calculation shows that $[\nabla(D_1),\nabla(D_2)]\in T_{X/S}$, and by the transversality, the composite 
$$
T_{X/S}\to T_{X/k}\to T_{X/k}/\sF_{\nabla}
$$
is an isomorphism (of Lie algebras). Hence the integrability of $\nabla$ implies that $[\nabla(D_1),\nabla(D_2)]=0$ (not only zero after modulo $F$). Hence $$[\nabla(D_1)^p,\nabla(D_2)^p]=0.$$  To show (iii), we write 
$$
D_1=\sum_i a_i\frac{\partial}{\partial{s_i}},\ D_2=\sum_j b_j\frac{\partial}{\partial{s_j}}.
$$
Then 
$$
\nabla(D_1)=\sum_ia_i\nabla(\frac{\partial}{\partial{s_i}}),\ \psi_{\nabla}(D_2)=\sum_jb_j^p\psi_{\nabla}(\frac{\partial}{\partial{s_j}}).
$$
Thus
$$
[\psi_{\nabla}(D_1),\nabla(D_2)]=\sum_{i,j}a_i^pb_j[\psi_{\nabla}(\frac{\partial}{\partial{s_j}}),\nabla(\frac{\partial}{\partial{s_i}})],
$$
and therefore it suffices to show (iii) for $D_1=\frac{\partial}{\partial{s_i}}, D_2=\frac{\partial}{\partial{s_j}}$ for any $i,j$. But this is clear as $[\nabla(D_1),\nabla(D_2)]=0$.
The lemma is proved.
\end{proof}
Note that $T_{X/S}$ is a restricted Lie subalgebra of $T_{X/k}$, it is meaningful to make the following 
\begin{definition}
Let $(f,\nabla)$ be a transversal foliated $S$-variety. Let $$\psi_{\nabla}: F_X^*f^*T_{S/k}\to T_{X/S}$$ be the associated $p$-curvature. It is said to be nilpotent if $[p]^n\circ\psi_{\nabla}=0$ for some non-negative integer $n$. It is said to be $p$-nilpotent if $[p]\circ\psi_{\nabla}=0$. 
\end{definition}
Replacing $\psi_{\nabla}$ in the above definition with the Higgs field $\theta$, we obtain the corresponding nilpotency conditions for a Higgs $S$-variety $(g,\theta)$. Denote $\NMIC_{nil}(S)$ (resp. $\NMIC^s_{nil}(S)$) for the category of transversal foliated $S$-varieties (resp. nonlinear flat bundles over $S$) with nilpotent $p$-curvature, and $\NHIG_{nil}(S')$ (resp. $\NHIG^s_{nil}(S')$) for the category of Higgs $S$-varieties (resp. nonlinear Higgs bundles over $S'$) with nilpotent Higgs fields.

\begin{theorem}\label{local nonlinear Hodge correspondence}
Notations as above. Then there is an explicit equivalence of categories between $\NMIC_{nil}(S)$ and $\NHIG_{nil}(S')$, which restricts to an equivalence between $\NMIC^s_{nil}(S)$ and $\NHIG^s_{nil}(S')$ and extends the correspondence in Theorem \ref{equivalence in the case of vanishing curvature}. 
\end{theorem}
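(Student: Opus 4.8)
The plan is to use the Frobenius lifting $\tilde F$ to deform the canonical connection of Theorem~\ref{equivalence in the case of vanishing curvature} by the Higgs field, and to run the nonlinear Cartier descent in reverse for the quasi-inverse. Write $F=F_{S/k}$. The natural map $F^{*}\Omega_{S'/k}\to\Omega_{S/k}$ is zero, since relative Frobenius sends $dx'$ to $d(x^{p})=0$; hence $d\tilde F$ lands in $p\cdot\Omega_{\tilde S/W_2(k)}$ and
$$
\zeta:=\tfrac{1}{p}\,d\tilde F \bmod p\colon F^{*}\Omega_{S'/k}\longrightarrow\Omega_{S/k}
$$
is a well-defined $\sO_S$-linear map, with dual $\zeta^{\vee}\colon T_{S/k}\to F^{*}T_{S'/k}$. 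Given $(g\colon Y\to S',\theta)\in\NHIG_{nil}(S')$, set $X=F^{*}Y=Y\times_{S'}S$ with projections $\pi\colon X\to Y$ and $f\colon X\to S$; then $g\circ\pi=F\circ f$, so $\pi^{*}g^{*}T_{S'/k}=f^{*}F^{*}T_{S'/k}$, and base change gives $\pi^{*}T_{Y/S'}\cong T_{X/S}$. I would then form the vertical field
$$
\theta_{\zeta}\colon f^{*}T_{S/k}\xrightarrow{\;f^{*}\zeta^{\vee}\;}f^{*}F^{*}T_{S'/k}=\pi^{*}g^{*}T_{S'/k}\xrightarrow{\;\pi^{*}\theta\;}\pi^{*}T_{Y/S'}\cong T_{X/S},
$$
and set $\nabla:=\nabla_{can}+\theta_{\zeta}$, where $\nabla_{can}$ is the canonical connection attached to $X=F^{*}Y$ as in Lemma~\ref{factorization lemma}. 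Since $\theta_{\zeta}$ is vertical, $\pi\circ\nabla=\id$, so $\nabla$ is a genuine $1$-connection, and I define $C^{-1}(g,\theta):=(f,\nabla)$.

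Second, I would check $(f,\nabla)\in\NMIC_{nil}(S)$. Integrability means the subbundle $\sF_{\nabla}=\mathrm{im}(\nabla)$ is involutive, i.e.\ $[\nabla(D_1),\nabla(D_2)]\in\sF_{\nabla}$ for local sections $D_i$ of $f^{*}T_{S/k}$. Expanding, the canonical term $[\nabla_{can}(D_1),\nabla_{can}(D_2)]$ lies in the involutive $\sF_{can}$; the vertical term $[\theta_{\zeta}(D_1),\theta_{\zeta}(D_2)]$ vanishes because the coefficients of $\theta_\zeta$ are pulled back from $S$ and are therefore killed by the vertical fields $\pi^{*}\theta(\cdot)$, so that it reduces to the integrability $[\theta,\theta]=0$ of the Higgs field; and the mixed terms $[\nabla_{can}(D_i),\theta_\zeta(D_j)]$ are vertical and, for coordinate fields where $[D_1,D_2]=0$, must cancel --- verifying this cancellation is where the defining identity of $\zeta$ (relating $d\tilde F$ to the $p$-power) enters. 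As in Lemma~\ref{basic property of p-curvature} I would reduce to local coordinates $\{s_i\}$ and compute with $D_i=\tfrac{\partial}{\partial s_i}$. The same computation produces the $p$-curvature: it should identify $\psi_{\nabla}\colon F_X^{*}f^{*}T_{S/k}\to T_{X/S}$, whose source is canonically $\pi^{*}g^{*}T_{S'/k}$, with $-\pi^{*}\theta$, exactly as the $p$-curvature of the inverse Cartier transform equals $-F^{*}\theta$ in the linear Deligne--Illusie/Ogus--Vologodsky theory. Hence $[p]^{n}\circ\psi_{\nabla}=0$ if and only if $\theta$ is nilpotent, so $C^{-1}$ lands in $\NMIC_{nil}(S)$; functoriality follows as in Theorem~\ref{equivalence in the case of vanishing curvature}, since a morphism of Higgs varieties pulls back under $\pi$ compatibly with the $\zeta$-twist.

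Third, I would build the quasi-inverse $C$ by reversing the deformation. For $(f,\nabla)\in\NMIC_{nil}(S)$, the source $F_X^{*}f^{*}T_{S/k}$ of the $p$-curvature is canonically $f^{*}F^{*}T_{S'/k}$ (using $f'\circ F_X=F\circ f$), so $(\psi_{\nabla})_{\zeta}:=\psi_{\nabla}\circ f^{*}\zeta^{\vee}\colon f^{*}T_{S/k}\to T_{X/S}$ is a well-defined vertical field, intrinsic to $(f,\nabla)$. I set $\nabla_{0}:=\nabla-(\psi_{\nabla})_{\zeta}$, again a $1$-connection, and the key computation --- dual to the forward one, by induction on the nilpotency level --- should show that $(f,\nabla_{0})$ is integrable with $\psi_{\nabla_{0}}=0$. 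Then the nonlinear Cartier descent, Theorem~\ref{equivalence in the case of vanishing curvature} (and its extension to arbitrary morphisms in the appendix), presents $X\cong F^{*}Y$ with $Y=X/\sF_{\nabla_{0}}$ and $g\colon Y\to S'$, and the Higgs field is read off from $\psi_{\nabla}$ pushed down to $Y$, giving $C(f,\nabla):=(g,\theta)$. That $C$ and $C^{-1}$ are mutually inverse then amounts to the forward and reverse $\zeta$-twists being inverse, together with the dictionary $\psi_{\nabla}\leftrightarrow-\pi^{*}\theta$, both of which come out of the local computation above.

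Finally, for the asserted properties: the equivalence restricts to the smooth subcategories because $f$ is smooth if and only if $g$ is (shown in \S\ref{sec:nonlinear_cartier_descent_theorem}) and $\theta_{\zeta},\psi_{\nabla}$ are defined identically there; it extends Theorem~\ref{equivalence in the case of vanishing curvature} because $\theta=0$ forces $\theta_{\zeta}=0$, whence $\nabla=\nabla_{can}$, and dually $\psi_{\nabla}=0$ forces $\nabla_{0}=\nabla$; and over linear objects the vector-field construction $\theta_{\zeta}$ reproduces the classical twist $(\id\otimes\zeta)\circ F^{*}\theta$ of the Ogus--Vologodsky inverse Cartier transform, by the comparison of local expressions already used for the functor $G$. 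I expect the main obstacle to be the second step: transcribing the linear integrability and $p$-curvature identities into the nonlinear setting, where $T_{X/S}$ is the full sheaf of fibrewise vector fields rather than an endomorphism sheaf, so that the mixed and vertical brackets must be handled directly as Lie brackets, and --- most delicately --- showing that the reverse deformation $\nabla_0$ kills the $p$-curvature exactly rather than only up to lower-order terms.
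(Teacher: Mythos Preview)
Your overall strategy is exactly the paper's: use $\zeta$ coming from the Frobenius lifting to deform $\nabla_{can}$ by a vertical $\theta$-term, identify the $p$-curvature, and then run nonlinear Cartier descent in reverse. The integrability argument you sketch also matches the paper's. However, your deformation formula $\nabla=\nabla_{can}+\theta_{\zeta}$ is not the right one, and this is a genuine gap, not just a detail.

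The issue is precisely the one you flag in your last sentence, but it already bites in the forward direction. When you expand $\nabla(D)^{p}$ for $D=\partial/\partial s_{i}$ via the Jacobson formula, three pieces appear: $\nabla_{can}(D)^{p}=0$, the Lie-polynomial terms $\sum_{i}s_{i}(\nabla_{can}(D),\theta_{\zeta}(D))$, and the $p$-th power $(\theta_{\zeta}(D))^{p}=[p](\theta_{\zeta}(D))$. The Lie-polynomial terms indeed give $\pm\pi^{*}\theta(D)$, but the last piece equals $\pm[p]\circ\pi^{*}\theta\circ f^{*}\zeta(D)$, which is \emph{not} zero unless $\theta$ is $p$-nilpotent. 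So with your formula the $p$-curvature is $\pi^{*}\theta$ plus an unwanted correction, and $\psi_{\nabla}\leftrightarrow\pm\pi^{*}\theta$ fails beyond the $p$-nilpotent case.

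The paper's fix is to replace $\theta_{\zeta}=\pi^{*}\theta\circ f^{*}\zeta$ by the telescoping series
\[
\tilde\theta\circ f^{*}\zeta,\qquad \tilde\theta:=\sum_{n\ge 0}[p]^{n}\circ\pi^{*}\theta\circ(f^{*}\zeta)^{n},
\]
which is a finite sum by nilpotency, and to set $\nabla=\nabla_{can}-\tilde\theta\circ f^{*}\zeta$. Then $[p](\tilde\theta\circ f^{*}\zeta)=\tilde\theta-\pi^{*}\theta$, so the $[p]$-term in Jacobson cancels against all but the bottom piece of the Lie-polynomial contribution, leaving $\psi_{\nabla}=\pi^{*}\theta$ on the nose. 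The reverse direction needs the analogous series $\nabla_{new}=\nabla+\sum_{n\ge 0}[p]^{n}\circ(\psi_{\nabla}\circ f^{*}\zeta)\circ(f^{*}\zeta)^{n}$; your single-term correction $\nabla_{0}=\nabla-(\psi_{\nabla})_{\zeta}$ kills the $p$-curvature only modulo $[p]\psi_{\nabla}$, so the induction you allude to does not close up without inserting these higher terms. Your formulas are correct exactly on the $p$-nilpotent subcategories; to get the full nilpotent equivalence you must build in the $[p]^{n}$-series from the start.
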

\begin{remark}\label{local correspondence preserves flatness}
By Remark \ref{remark on flatness in nonlinear Cartier descent}, the foregoing proof actually also shows that the constructed equivalence preserves flatness of morphisms. That is, it sends objects $(f,\nabla)\in \NMIC_{nil}(S)$, where $f: X\to S$ is flat, to objects $(g: Y\to S',\theta)$ in $\NHIG_{nil}(S')$ with $g$ flat, and vice versa. 
\end{remark}
\begin{proof}
The whole proof is divided into several steps.\\

{\itshape Step 1.} The lifting of Frobenius $\tilde F$ induces a nonzero morphism
$$
\frac{d\tilde F}{[p]}: F^*\Omega_{S'}\to \Omega_S.
$$
Let $\zeta : T_{S}\to F_{S/k}^*T_{S'}$ be its linear dual. For a set of local coordinates $\{s_1,\cdots,s_m\}$ on $\tilde S$, $\tilde F$ is described by the set $\{a_i\}$ with $a_i\in \sO_{S}$, so that one may write
$$
\tilde F^*(s_i)=s_i^p+pa_i. \ 1\leq i\leq m.
$$
Then we have
$$
\zeta(\frac{\partial}{\partial{s_i}})=\sum_{j}f_{ij}\frac{\partial}{\partial{s_j}},
$$
where $f_{ij}=\frac{\partial a_j}{\partial{s_i}}+\delta_{ij}s_j^{p-1}$. The following lemma is obvious by the expression of $f_{ij}$.
\begin{lemma}\label{properties of zeta}
Notations as above. Then the following properties hold
\begin{itemize}
    \item [(i)] For any $i,j,k$,
    $$
    \frac{\partial f_{ik}}{\partial s_j}=\frac{\partial f_{jk}}{\partial s_i}.
    $$
    \item [(ii)] $\frac{\partial^{p-1}f_{ij}}{\partial{s_i}^{p-1}}=-\delta_{ij}$.
\end{itemize}
\end{lemma}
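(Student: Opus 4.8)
The plan is to reduce both assertions to the single explicit formula $f_{ij}=\frac{\partial a_j}{\partial s_i}+\delta_{ij}s_j^{p-1}$ together with two elementary facts in characteristic $p$. The first is that each coordinate derivation is $p$-nilpotent, i.e. $\left(\frac{\partial}{\partial s_i}\right)^p=0$ as an operator on $\sO_S$; this holds because $\frac{\partial}{\partial s_i}$ sends each coordinate $s_j$ to the constant $\delta_{ij}$, so its $p$-th iterate annihilates every coordinate and, being again a derivation, vanishes identically (this is the same $p$-nilpotency invoked in the proof of Lemma \ref{basic property of p-curvature}). The second is Wilson's congruence $(p-1)!\equiv -1\pmod p$.

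For (i), I would simply differentiate the formula once more. Using $\frac{\partial}{\partial s_j}(s_k^{p-1})=(p-1)\delta_{jk}s_k^{p-2}$ one finds
$$
\frac{\partial f_{ik}}{\partial s_j}=\frac{\partial^2 a_k}{\partial s_j\,\partial s_i}+(p-1)\,\delta_{ik}\delta_{jk}\,s_k^{p-2}.
$$
The first term is symmetric in $i$ and $j$ because mixed partials commute, and the second term is manifestly symmetric in $i$ and $j$ since $\delta_{ik}\delta_{jk}$ is. Interchanging $i$ and $j$ therefore returns $\frac{\partial f_{jk}}{\partial s_i}$, which is the asserted equality.

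For (ii), I would apply $\frac{\partial^{p-1}}{\partial s_i^{p-1}}$ directly to the formula. The contribution of the first summand is $\frac{\partial^{p-1}}{\partial s_i^{p-1}}\frac{\partial a_j}{\partial s_i}=\frac{\partial^p a_j}{\partial s_i^{p}}=0$ by $p$-nilpotency, independently of whether $i=j$. The contribution of the second summand is $\delta_{ij}\,\frac{\partial^{p-1}}{\partial s_i^{p-1}}(s_j^{p-1})$, which is nonzero only when $i=j$, in which case it equals $\frac{\partial^{p-1}}{\partial s_i^{p-1}}(s_i^{p-1})=(p-1)!=-1$ by Wilson's congruence. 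Hence $\frac{\partial^{p-1}f_{ij}}{\partial s_i^{p-1}}=-\delta_{ij}$.

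Since both statements dissolve into these termwise computations, there is no genuine obstacle here. The only point deserving a moment's care is the appeal to $p$-nilpotency of the coordinate derivations, which is precisely what makes the higher partials of the $a$-part drop out, so that the entire content of (ii) is carried by the monomial $s_j^{p-1}$ and Wilson's theorem.
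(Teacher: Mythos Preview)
Your proof is correct and follows exactly the approach the paper indicates: direct computation from the explicit formula $f_{ij}=\frac{\partial a_j}{\partial s_i}+\delta_{ij}s_j^{p-1}$. The paper itself merely declares the lemma ``obvious by the expression of $f_{ij}$'' without writing out any details, so your argument is a faithful expansion of what the author had in mind.
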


{\itshape Step 2.} Start with $(g,\theta)\in \NHIG_{nil}(S')$. We first do the construction in Theorem \ref{equivalence in the case of vanishing curvature} for $(g,0)$. So we obtain the pair $(f,\nabla_{can})$, where $f$ sits in the following Cartesian diagram
\begin{equation*}
		\begin{gathered}
		\xymatrix{
			X \ar[r]^{\pi} \ar[d]_{f} & Y \ar[d]^{g} \\
			S \ar[r]^{F_{S/k}} & S'.
		}
		\end{gathered}  
		\end{equation*}
Then the composite 
$$
\pi^*\theta\circ f^*\zeta: f^*T_{S/k}\stackrel{f^*\zeta}{\to} f^*F_{S/k}^*T_{S'}=\pi^*g^*T_{S'}\stackrel{\pi^*\theta}{\to} \pi^*T_{Y/S'}=T_{X/S}.
$$
is an element in $\Hom(f^*T_{S/k},T_{X/S})$. In above, the last equality (rather a canonical isomorphism) follows from $\pi^*\Omega_{Y/S'}=\Omega_{X/S}$ since this is a Cartesian diagram, and the fact that $\pi$ is flat. Note that the space of connections on $f$ is a torsor under $\Hom(f^*T_{S/k},T_{X/S})$. We are going to deform $\nabla_{can}$ into a new connection on $f$. More precisely, we define $\nabla: f^*T_{S/k}\to T_{X/k}$ by the formula
\begin{equation}\label{connection formula}
\nabla = \nabla_{can}-\sum_{n\geq 0}[p]^n\circ(\pi^*\theta\circ f^*\zeta)\circ (f^*\zeta)^n.    
\end{equation}
One should write $[p]F_X^*$ instead of $[p]$, if one insists in the linearity in the expression. Since $\theta$ is nilpotent, it is actually a finite sum, so the expression is always meaningful. In this step, we verify that $\nabla$ is integrable. The problem is local. As above, we take an open affine subset $V\subset S$ with a set of local coordinates $\{s_1,\cdots,s_m\}$. By base change via $\sigma$, we obtain an open subset $V'\subset S'$ with local coordinates $\{s'_1,\cdots,s'_m\}$. As $f$ is equipped with the canonical connection, it is pseudo-smooth. Let $U'\subset g^{-1}V'$ be an open affine subset with a set of generators $\{T'_1,\cdots,T'_r\}$ for the $\sO_{U'}$-module $T_{U'/V'}=T_{Y/S'}|_{U'}$. Then over $U=U'\times_{V'}V\subset f^{-1}(V)$, we may take a set of generators $\{T_1=T'_1\otimes 1,\cdots,T_r=T'_r\otimes 1\}$ for the $\sO_U$-module $T_{U/V}=T_{X/S}|_{U}$. Via the splitting induced by $\sF_{can}$, we obtain a set of generators  
$$\{T_1,\cdots,T_r,\frac{\partial}{\partial_{s_1}},\cdots,\frac{\partial}{\partial_{s_m}}\}$$
of the $\sO_U$-module $T_{U/k}=T_{X/k}|_{U}$. Under this choice, 
$$
\nabla_{can}(\frac{\partial}{\partial{s_i}})= \frac{\partial}{\partial{s_i}}.
$$  
Let $N$ be an integer such that $[p]^{N+1}\theta=0$. Set 
$$
\tilde \theta=\sum_{n=0}^{N}[p]^n\circ \pi^*\theta\circ (f^*\zeta)^n.
$$
Then one computes that 
$[\nabla(\frac{\partial}{\partial{s_i}}),\nabla(\frac{\partial}{\partial{s_j}}))]$ is equal to
$$
[\frac{\partial}{\partial{s_i}},-\tilde\theta\circ f^*\zeta(\frac{\partial}{\partial{s_j}})]+[\tilde \theta\circ f^*\zeta(\frac{\partial}{\partial{s_i}}),\frac{\partial}{\partial{s_j}}]+[\tilde\theta\circ f^*\zeta(\frac{\partial}{\partial{s_i}}),\tilde\theta\circ f^*\zeta(\frac{\partial}{\partial{s_j}})].
$$
The first two terms form a group, and the last term forms another. We shall show that they are both zero. Write $\tilde \theta(\frac{\partial}{\partial{s'_i}})=\sum_j\tilde\theta_{ij}T_j$ with $\tilde \theta_{ij}\in \sO_{Y}$.
We observe that for any $i,j,k$, 
$$
\frac{\partial \tilde \theta_{ij}}{\partial s_k}=0.
$$
To see this, let us look at the first two summands of $\tilde \theta$. Let us also write  $$\theta(\frac{\partial}{\partial{s'_i}})=\sum_j\theta_{ij}T'_j,$$ where $\theta_{ij}=\theta_{ij}(s'_1,\cdots,s'_m,t'_1,\cdots,t'_{r'})$ (where $\{t'_1,\cdots,t'_{r'}\}$ is a set of generators for the $\sO_{V'}$-algebra $\sO_{U'}$). Then
$$
\pi^*\theta(\frac{\partial}{\partial{s'_i}})=\sum_j\pi^*\theta_{ij}T_j,
$$
where $\pi^*\theta_{ij}=\theta_{ij}(s^p_1,\cdots,s_m^p,t'_1,\cdots,t'_{r'})$. Clearly, $\frac{\pi^*\theta_{ij}}{\partial s_k}=0$. We further write
$$
\pi^*\theta\circ f^*\zeta(\frac{\partial}{\partial{s'_i}})=\sum_{k,l}f_{ij}\pi^*\theta_{jk}T_k.
$$
It follows that 
$$
[p]\circ \pi^*\theta\circ f^*\zeta(\frac{\partial} {\partial{s'_i}})=\sum_j(\sum_{l}f^p_{il}\pi^*\theta^p_{lj})T_j.
$$
It is clear that 
$$
\frac{\partial (\sum_{l}f^p_{il}\pi^*\theta^p_{lj})}{\partial s_k}=0.
$$
Certainly, the same property holds for all $n\geq 1$ terms. With this observation in mind, it is quick to see that the first group equals
$$
\sum_{l,k}\tilde \theta_{lk}(\frac{\partial f_{il}}{\partial s_j}-\frac{\partial f_{jl}}{\partial s_i})T_k.
$$
Then Lemma \ref{properties of zeta} (i) allows us to conclude the first group is zero. Now we consider the second group. Although it would expand into a sum of many terms, it is zero for a very simple reason, namely $[\theta,\theta]=0$. Let us go to the detail. 
\begin{claim}\label{commutativity}
For any $n\geq 0, n'\geq 0$, and for any $D_1,D_2\in \pi^*g^*T_{S'/k}$,
$$
[[p]^n\circ \pi^*\theta (D_1),[p]^{n'}\circ \pi^*\theta (D_2)]=0.
$$    
\end{claim}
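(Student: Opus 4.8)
The plan is to first strip off the iterated $p$-th power operators, reducing to the case $n=n'=0$, and then to deduce the remaining bracket vanishing from the integrability $[\theta,\theta]=0$ together with the base-change description $X=F^*_{S/k}Y$. The first reduction I would carry out purely inside the restricted Lie algebra $T_{X/S}\subset T_{X/k}$. For $E\in T_{X/S}$ the operator $[p]^n(E)=E^{[p]^n}$ is the $p^n$-fold composite of $E$ as a derivation, and $\mathrm{ad}(E^{[p]})=(\mathrm{ad}\,E)^p$. Hence, whenever $E_1,E_2\in T_{X/S}$ satisfy $[E_1,E_2]=0$, one has
$$
[E_1^{[p]},E_2]=(\mathrm{ad}\,E_1)^p(E_2)=0,\qquad [E_1,E_2^{[p]}]=-(\mathrm{ad}\,E_2)^p(E_1)=0 .
$$
Iterating these two identities (each application of $[p]$ to one argument preserves the vanishing) gives $[E_1^{[p]^n},E_2^{[p]^{n'}}]=0$ for all $n,n'\geq 0$. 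Taking $E_i=\pi^*\theta(D_i)$, the Claim is reduced to the single identity $[\pi^*\theta(D_1),\pi^*\theta(D_2)]=0$.

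For this last identity I would compute locally in the coordinates already fixed. As $X=Y\times_{S'}S$, one has $T_{X/S}=\pi^*T_{Y/S'}$, and the bracket of pulled-back vertical fields is the pull-back of the bracket, i.e. $[\pi^*V,\pi^*W]=\pi^*[V,W]$ for $V,W\in T_{Y/S'}$. Write $D_i=\sum_k c_{ik}e_k$, where $e_k$ is the pull-back of $\partial/\partial s'_k$ and the coefficients $c_{ik}$ lie in $f^*\sO_S$; this is precisely the situation in which the Claim is applied, the relevant $D_i$ being $(f^*\zeta)$-iterates of the $\partial/\partial s_\bullet$, whose coefficients are products of the functions $f_{ij}\in f^*\sO_S$. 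Putting $E_k=\pi^*\theta(e_k)\in T_{X/S}$ and expanding by the Leibniz rule,
$$
[\pi^*\theta(D_1),\pi^*\theta(D_2)]=\sum_{k,l}\Big(c_{1k}c_{2l}[E_k,E_l]+c_{1k}\,E_k(c_{2l})\,E_l-c_{2l}\,E_l(c_{1k})\,E_k\Big).
$$
Each $E_k$ is $f$-vertical, hence annihilates $f^*\sO_S$, so the two derivative sums vanish; and the pure term vanishes because $[E_k,E_l]=\pi^*[\theta(\partial/\partial s'_k),\theta(\partial/\partial s'_l)]=\pi^*([\theta,\theta])=0$ by integrability of $\theta$. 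This gives $[\pi^*\theta(D_1),\pi^*\theta(D_2)]=0$, and the Claim follows.

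The step I expect to be the main obstacle is the interaction of the non-linear $p$-power map with the $\sO_X$-module structure. One must make sure that $[p]^n$ really is iterated operator composition, so that the restricted-Lie reduction is legitimate, and that the Leibniz correction terms genuinely drop out; the latter rests on the coefficients $c_{ik}$ being horizontal (pulled back from $S$) and on the verticality of the $E_k$. Without the horizontality the pure-bracket term alone would not suffice — it is exactly the provenance of the $D_i$ from $f^*\zeta$ that makes the derivative terms disappear. Once these points are secured the remaining computation is formal.
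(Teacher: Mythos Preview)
Your approach is essentially the paper's: the reduction to $n=n'=0$ via $\mathrm{ad}(E^{[p]})=(\mathrm{ad}\,E)^p$ is exactly the paper's inductive step, and the base case is likewise deduced from $[\theta,\theta]=0$. One point deserves comment. You deliberately restrict to $D_i$ with coefficients $c_{ik}\in f^*\sO_S$, whereas the Claim as written allows arbitrary $D_i\in\pi^*g^*T_{S'/k}$, i.e.\ coefficients in $\sO_X$. Your restriction is both sufficient for every application of the Claim in the paper (all the $D_i$ that occur are built from the $f_{ij}\in\sO_S$) and in fact necessary: with general $\sO_X$-coefficients the base case can fail, e.g.\ for $m=1$, $\theta(\partial_{s'})=T'$, $D_1=a\,\partial_{s'}$ with $T(a)\neq 0$ and $D_2=\partial_{s'}$, one gets $[aT,T]=-T(a)\,T\neq 0$. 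The paper's displayed expansion of $[\pi^*\theta(D_1),\pi^*\theta(D_2)]$ silently drops the $T_l(a_{\bullet j})$ terms, which is tantamount to your horizontality assumption; your version makes this hypothesis explicit and gives a cleaner base case via $[\pi^*V,\pi^*W]=\pi^*[V,W]$ for $V,W\in T_{Y/S'}$.
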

\begin{proof}
We do induction on $(n,n')$. Write 
$D_i=\sum a_{ij}\frac{\partial}{\partial_{s'_j}},\ a_{ij}\in \sO_X$. Consider $n=n'=0$. We compute that
$$
[\pi^*\theta(D_1),\pi^*\theta(D_2)]=\sum_{k,l}a_{1i}a_{2j}(\pi^*\theta_{il}T_l(\pi^*\theta_{jk})-\pi^*\theta_{jl}T_l(\pi^*\theta_{ik})T_k.
$$
It follows from $[\theta,\theta]=0$ that for any $i,j$,
$$
\sum_l [\theta_{il}T_l(\theta_{jk})-\theta_{jl}T_l(\theta_{ik})]=0.
$$
Thus $[\pi^*\theta(D_1),\pi^*\theta(D_2)]=0$. Consider $(n+1,n')$. We have
$$
[[p]^{n+1}\circ \pi^*\theta (D_1),[p]^{n'}\circ \pi^*\theta (D_2)]=ad([p]([p]^n\circ \pi^*\theta (D_1)))([p]^{n'}\circ \pi^*\theta (D_2)),
$$
which equals 
$$
ad([p]^n\circ \pi^*\theta (D_1))^p([p]^{n'}\circ \pi^*\theta (D_2)).
$$
It further equals 
$$
ad([p]^n\circ \pi^*\theta (D_1))^{p-1}(ad([p]^n\circ \pi^*\theta (D_1))([p]^{n'}\circ \pi^*\theta (D_2))).
$$
As 
$$
ad([p]^n\circ \pi^*\theta (D_1))([p]^{n'}\circ \pi^*\theta (D_2))=[[p]^n\circ \pi^*\theta (D_1),[p]^{n'}\circ \pi^*\theta (D_2)]
$$
which is zero by induction hypothesis, it follows that 
$$
ad([p]^n\circ \pi^*\theta (D_1))^p([p]^{n'}\circ \pi^*\theta (D_2))=0.
$$
\end{proof}
The above claim directly shows that the second group is also zero. We have therefore shown the integrability of $\nabla$. \\

{\itshape Step 3.}
In this step, we will establish the $p$-curvature formula for $\nabla$ constructed in step 2.  
\begin{lemma}\label{p-curvature formula}
Let $\nabla$ be the integrable connection in Formula \ref{connection formula}. Then it holds that
$$
\psi_{\nabla}=\pi^*\theta.
$$
\end{lemma}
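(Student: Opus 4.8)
The statement is local and $\psi_\nabla$ is $p$-linear (Lemma \ref{basic property of p-curvature}), so by the description of $\psi_\nabla$ given there it suffices to evaluate both sides on the frame $\{\frac{\partial}{\partial s_i}\}$ of Step 2 and to compare the resulting elements of $T_{X/S}$. In those coordinates $\nabla_{can}(\frac{\partial}{\partial s_i})=\frac{\partial}{\partial s_i}$ has vanishing $p$-th power and commutes with each $T_k$, and by Formula \ref{connection formula} one has $\nabla(\frac{\partial}{\partial s_i})=\frac{\partial}{\partial s_i}-\Theta_i$ with $\Theta_i:=(\tilde\theta\circ f^*\zeta)(\frac{\partial}{\partial s_i})\in T_{X/S}$. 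Since $(\frac{\partial}{\partial s_i})^p=0$ and, in these coordinates, every iterated bracket below stays inside $T_{X/S}$ while the transversal splitting identifies $T_{X/S}$ with $T_{X/k}/\sF_\nabla$ by the identity, the value $\psi_\nabla(\frac{\partial}{\partial s_i})$ is exactly the restricted power $\big(\frac{\partial}{\partial s_i}-\Theta_i\big)^p\in T_{X/S}$; the whole problem is to identify it with $\pi^*\theta(\frac{\partial}{\partial s'_i})$.

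The plan is to expand this $p$-th power by Jacobson's formula
$$\Big(\tfrac{\partial}{\partial s_i}-\Theta_i\Big)^p=\Big(\tfrac{\partial}{\partial s_i}\Big)^p-\Theta_i^p+\sum_{\ell=1}^{p-1}s_\ell\Big(\tfrac{\partial}{\partial s_i},-\Theta_i\Big),$$
where $\sum_\ell\ell\,s_\ell\,t^{\ell-1}=\mathrm{ad}\big(t\tfrac{\partial}{\partial s_i}-\Theta_i\big)^{p-1}\big(\tfrac{\partial}{\partial s_i}\big)$. The first term vanishes, and the key simplification of the correction terms comes from the commutativity already isolated in Claim \ref{commutativity}: writing $\Theta_i$ and each of its iterated derivatives $\mathrm{ad}(\frac{\partial}{\partial s_i})^a\Theta_i$ as $\sO_S$-linear combinations of the fields $[p]^n\pi^*\theta(\frac{\partial}{\partial s'_l})$ (the base-coefficients being annihilated by the vertical $T_k$), Claim \ref{commutativity} gives $[\mathrm{ad}(\frac{\partial}{\partial s_i})^a\Theta_i,\ \mathrm{ad}(\frac{\partial}{\partial s_i})^b\Theta_i]=0$ for all $a,b$. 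Consequently $\mathrm{ad}(\Theta_i)$ annihilates every $\mathrm{ad}(\frac{\partial}{\partial s_i})^a\Theta_i$, the bracket polynomial collapses to $t^{p-2}\,\mathrm{ad}(\frac{\partial}{\partial s_i})^{p-1}\Theta_i$, and matching powers of $t$ leaves a single correction term, so that
$$\Big(\tfrac{\partial}{\partial s_i}-\Theta_i\Big)^p=-\Theta_i^p-\mathrm{ad}\Big(\tfrac{\partial}{\partial s_i}\Big)^{p-1}\Theta_i.$$

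It remains to evaluate the two terms on the right and to see that they telescope. Here I would expand $\Theta_i=\sum_n\Theta_i^{(n)}$ according to the finite sum defining $\tilde\theta$ (finite by nilpotency of $\theta$), where $\Theta_i^{(n)}=[p]^n\pi^*\theta\big((f^*\zeta)^{n+1}(\frac{\partial}{\partial s_i})\big)$ is the level-$n$ piece. Two ingredients then drive the cancellation: Lemma \ref{properties of zeta}(ii), which through $\frac{\partial^{p-1}f_{ij}}{\partial s_i^{p-1}}=-\delta_{ij}$ computes $\mathrm{ad}(\frac{\partial}{\partial s_i})^{p-1}$ of each $\Theta_i^{(n)}$; and Hochschild's formula $(gD)^p=g^pD^p+(gD)^{p-1}(g)D$ together with $T_k(f_{ij})=0$, which computes $(\Theta_i^{(n)})^p$ and shows it raises the level by one. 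Organizing both sums by level, the level-$(n{+}1)$ contribution of $\Theta_i^p$ cancels the level-$(n{+}1)$ contribution of $\mathrm{ad}(\frac{\partial}{\partial s_i})^{p-1}\Theta_i$ for every $n\ge0$, so that only the level-zero term $-\mathrm{ad}(\frac{\partial}{\partial s_i})^{p-1}\Theta_i^{(0)}=\pi^*\theta(\frac{\partial}{\partial s'_i})$ survives, which is the asserted formula $\psi_\nabla=\pi^*\theta$. The main obstacle—and the only genuinely delicate point—is this last telescoping: it requires tracking the Frobenius twists hidden in the operators $[p]^n$ (the ``$[p]F_X^*$'' convention noted after Formula \ref{connection formula}), so that the coefficient produced by $(f^*\zeta)^{n+1}$ is matched correctly against the $p$-th power arising from $\Theta_i^p$. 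The recursion $\tilde\theta=\pi^*\theta+[p]\circ\tilde\theta\circ f^*\zeta$, immediate from the definition of $\tilde\theta$ and $[p]^{N+1}\theta=0$, is the clean way to package this bookkeeping.
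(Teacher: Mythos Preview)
Your proposal is correct and follows essentially the same route as the paper: Jacobson's formula applied to $\nabla_{can}(D)-\tilde\theta\circ f^*\zeta(D)$, the collapse of the mixed bracket terms via Claim~\ref{commutativity}, the evaluation of $\mathrm{ad}(\tfrac{\partial}{\partial s_i})^{p-1}$ through Lemma~\ref{properties of zeta}(ii), and the final telescoping. The only cosmetic difference is that the paper packages the last step directly via the recursion $[p]\circ\tilde\theta\circ f^*\zeta=\tilde\theta-\pi^*\theta$ (modulo the vanishing top term), computing $\sum_{\ell}s_\ell=\tilde\theta$ in one stroke, whereas you first propose a level-by-level cancellation with Hochschild's identity before invoking that same recursion; the latter is the cleaner bookkeeping and makes your $\Theta_i^{(n)}$ decomposition unnecessary.
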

\begin{proof}
To prove it, we use the Jacobson formula (see Formula \cite[(5.2.4), (5.2.5)]{K}). It suffices to show for any $D\in \{\frac{\partial}{\partial{s_i}}\}$, the above equality holds for $D$. In this case,
$$
\psi_{\nabla}(D)=\nabla(D)^p=(\nabla_{can}(D)-\tilde\theta\circ f^*\zeta(D))^p,
$$
which is equal to 
$$
\nabla_{can}(D)^p+(-1)^p(\tilde \theta\circ f^*\zeta(D))^p+\sum_{i=1}^{p-1}s_i(\nabla_{can}(D),-\tilde\theta\circ f^*\zeta(D)),
$$
where $s_i$s are the universal Lie polynomials. In above, the first term is simply zero. The second term is actually the evaluation at $D$ of
$$
-[p]\circ \tilde \theta\circ f^*\zeta=-\sum_{n=1}^{N}[p]^n\circ \pi^*\theta\circ (f^*\zeta)^n-[p]^{N+1}\circ \pi^*\theta\circ (f^*\zeta)^{N+1}.
$$
For $[p]^{N+1}\circ \theta=0$, the second term in the right hand side of the above expression vanishes. 
\begin{claim}\label{universal Lie polynomial}
$$
\sum_{i=1}^{p-1}s_i(\nabla_{can},-\tilde\theta\circ f^*\zeta)=\tilde \theta. 
$$
\end{claim}
\begin{proof}
We shall compute 
$$
\textrm{ad}(t\nabla_{can}(D)-\tilde \theta\circ f^*\zeta(D))^{p-1})(\nabla_{can}(D))
$$
for $D=\frac{\partial}{\partial s_i}$ explicitly. Compute 
$$
[t\nabla_{can}(D)-\tilde \theta\circ f^*\zeta(D),\nabla_{can}(D)]=-[\tilde \theta\circ f^*\zeta(D),\nabla_{can}(D)]=\sum_{j,k}\frac{\partial f_{ij}}{\partial s_i}\tilde \theta_{jk}T_k.
$$
For $p=2$, we may already conclude the claim using Lemma \ref{properties of zeta} (ii). For an odd $p$, we compute the next term 
\begin{eqnarray*}
[t\frac{\partial}{\partial s_i}-\sum_{j,k}f_{ij}\tilde \theta_{jk}T_k,\sum_{j,k}\frac{\partial f_{ij}}{\partial s_i}\tilde \theta_{jk}T_k]&=&t[\frac{\partial}{\partial s_i},\sum_{j,k}\frac{\partial f_{ij}}{\partial s_i}\tilde \theta_{jk}T_k]    \\
&-& [\sum_{j,k}f_{ij}\tilde \theta_{jk}T_k,\sum_{j,k}\frac{\partial f_{ij}}{\partial s_i}\tilde \theta_{jk}T_k]
\end{eqnarray*}
Clearly, the first term in the right hand side gives us $t\sum_{j,k}\frac{\partial^2 f_{ij}}{\partial s_i^2}\tilde \theta_{jk}T_k$. The upshot is that the second term vanishes: First of all, by Claim \ref{commutativity}, it holds that for any $D_1,D_2\in \pi^*g^*T_{S'}$, $[\tilde \theta(D_1),\tilde \theta(D_2)]=0$. By setting
$$
D_1=\sum_jf_{ij}\frac{\partial}{\partial s'_j},\ D_2=\sum_j\frac{\partial f_{ij}}{\partial s_i}\frac{\partial}{\partial s'_j},
$$
we obtain
$$
\tilde \theta(D_1)=\sum_{j,k}f_{ij}\tilde \theta_{jk}T_k,\ \tilde \theta(D_2)=\sum_{j,k}\frac{\partial f_{ij}}{\partial s_i}\tilde \theta_{jk}T_k.
$$
Hence the second term is indeed zero. Summarizing it, we have obtained 
$$
\textrm{ad}(t\nabla_{can}(\frac{\partial}{\partial s_i})-\tilde \theta\circ f^*\zeta(\frac{\partial}{\partial s_i}))^{2}(\nabla_{can}(\frac{\partial}{\partial s_i}))=t\sum_{j,k}\frac{\partial^2 f_{ij}}{\partial s_i^2}\tilde \theta_{jk}T_k.
$$
Repeating this process, we get that
$$
\textrm{ad}(t\nabla_{can}(\frac{\partial}{\partial s_i})-\tilde \theta\circ f^*\zeta(\frac{\partial}{\partial s_i}))^{p-1}(\nabla_{can}(\frac{\partial}{\partial s_i}))=t^{p-2}\sum_{j,k}\frac{\partial^{p-1} f_{ij}}{\partial s_i^{p-1}}\tilde \theta_{jk}T_k.
$$
Thus by Lemma \ref{properties of zeta} (ii), we get that
$$
\textrm{ad}(t\nabla_{can}(\frac{\partial}{\partial s_i})-\tilde \theta\circ f^*\zeta(\frac{\partial}{\partial s_i}))^{p-1}(\nabla_{can}(\frac{\partial}{\partial s_i}))=-t^{p-2}\tilde \theta(\frac{\partial}{\partial s'_i}).
$$
Therefore,
$$
s_i(\nabla_{can}(D),-\pi^*\theta\circ f^*\zeta(D))=0, \ 1\leq i\leq p-2,
$$
and 
$$
s_{p-1}(\nabla_{can}(D),-\tilde \theta\circ f^*\zeta(D))=(p-1)^{-1}(-\tilde \theta(D))=\tilde \theta(D)
$$
\end{proof}
The lemma follows from Claim \ref{universal Lie polynomial} and the discussion before the claim.
\end{proof}
Because of Lemma \ref{p-curvature formula}, $\psi_{\nabla}$ is nilpotent. Hence the pair $(f,\nabla)$ belongs to the category $\NMIC_{nil}(S)$.\\

{\itshape Step 4.} 
Conversely, for $(f,\nabla)\in \NMIC_{nil}(S)$, we define a new connection on $f$ by
\begin{equation}\label{new connection}
\nabla_{new}=\nabla+\sum_{n\geq 0}[p]^n\circ (\psi_{\nabla}\circ f^*\zeta)\circ (f^*\zeta)^n,  
\end{equation}
where $\psi_{\nabla}\circ f^*\zeta$ reads
$$
f^*T_{S/k}\stackrel{f^*\zeta}{\to} f^*F_{S/k}^*T_{S'}=F_{X/k}^*f'^*T_{S'}=F_X^*f^*T_{S/k} \stackrel{\psi_{\nabla}}{\to} T_{X/S}.
$$
Again, because of the nilpotency condition on $\psi_{\nabla}$, the above sum is actually a finite sum. We are going to show that $\nabla_{new}$ is integrable and $p$-closed. In the expansion of the Lie bracket $[\nabla_{new},\nabla_{new}]$, we have these forms
$$
\{ [\nabla,\nabla],\ [\nabla, [p]^n\circ (\psi_{\nabla}\circ f^*\zeta)\circ (f^*\zeta)^n],\ [[p]^n\circ (\psi_{\nabla}\circ f^*\zeta)\circ (f^*\zeta)^n, [p]^{n'}\circ (\psi_{\nabla}\circ f^*\zeta)\circ (f^*\zeta)^{n'}\}.
$$
The first one vanishes because of the integrability of $\nabla$. For $n=0$ in the second form, we apply directly Lemma \ref{basic property of p-curvature} (iii). For $n\geq 1$, we express it as
$$
-ad([p]^{n}\circ (\psi_{\nabla}\circ f^*\zeta)\circ (f^*\zeta)^n)(\nabla)=-ad([p]^{n-1}\circ (\psi_{\nabla}\circ f^*\zeta)\circ (f^*\zeta)^n)^p(\nabla),
$$
and then apply an induction on $n$. The vanishing of the third form follows from the argument in the proof of Claim \ref{commutativity}, replacing the integrablity of $\theta$ by Lemma \ref{basic property of p-curvature} (ii). These altogether show the integrability of $\nabla_{new}$.\\

To show the $p$-closedness of $\nabla_{new}$, we use the same argument as Lemma \ref{p-curvature formula}. Suppose $[p]^{N+1}\psi_{\nabla}=0$, and set $\tilde \psi_{\nabla}=\sum_{n=0}^N[p]^n\circ \psi_{\nabla} \circ (f^*\zeta)^n$. Then by the Jacobson formula, we get for $D=\frac{\partial}{\partial s_i}$,
$$
\psi_{\nabla_{new}}(D)=\psi_{\nabla}(D)+(\tilde \psi_{\nabla}\circ f^*\zeta(D))^p+\sum_{i=1}^{p-1}s_i(\nabla(D),\tilde \psi_{\nabla}\circ f^*\zeta(D)).
$$
The second term equals 
$$
\sum_{n=1}^{N+1}[p]^n\circ \psi_{\nabla} \circ (f^*\zeta)^n(D)=\sum_{n=1}^N[p]^n\circ \psi_{\nabla} \circ (f^*\zeta)^n(D).
$$
To get the description of the third term, we compute that
$$
\textrm{ad}(t\nabla(\frac{\partial}{\partial s_i})+\tilde \psi_{\nabla}\circ f^*\zeta(\frac{\partial}{\partial s_i}))^{p-1}(\nabla(\frac{\partial}{\partial s_i}))=\tilde \psi_{\nabla}(D)t^{p-2}.
$$
We omit the full detail, because the calculation is very similar to that of Lemma \ref{p-curvature formula}, except noting the following point: Write 
$$
\nabla(\frac{\partial}{\partial s_i})=\frac{\partial}{\partial s_i}+\sum_ja_{ij}T_j.
$$ 
Then it is clear that for any $a\in \sO_S$, 
$$
\nabla(\frac{\partial}{\partial s_i})(a)=\frac{\partial a}{\partial s_i}.
$$
So it looks as if $\nabla_{can}$ acts on it. 
It follows that 
$$
s_i(\nabla(D),\tilde \psi_{\nabla}\circ f^*\zeta(D))=0,\ 1\leq i\leq p-2,
$$
and 
$$
s_{p-1}(\nabla(D),\tilde \psi_{\nabla}\circ f^*\zeta(D)=-\tilde \psi_{\nabla}(D)=-\psi_{\nabla}(D)-\sum_{n=1}^N[p]^n\circ  \psi_{\nabla}\circ (f^*\zeta)^n
$$
Therefore $\psi_{\nabla_{new}}(D)=\psi_{\nabla}(D)-\psi_{\nabla}(D)=0$ as claimed.\\

{\itshape Step 5.} By Theorem \ref{equivalence in the case of vanishing curvature}, the pair $(f,\nabla_{new})$ descends to an object $(g,0)\in \NHIG_0(S')$, that is, there is a unique (up to isomorphism) morphism $g: Y\to S'$ such that $X=F_{S/k}^*Y$ as $S$-variety and $\nabla_{new}=\nabla_{can}$. So we may express the $p$-curvature of $\nabla$ as
$$
\psi_{\nabla}: \pi^*g^*T_{S'}=F_X^*f^*T_{S/k}\to T_{X/S}=\pi^*T_{Y/S'}.
$$
But since
$$
[\nabla_{can},\psi_{\nabla}]=[\nabla,\psi_{\nabla}]+[\sum_{n\geq 0}[p]^n\circ (\psi_{\nabla}\circ f^*\zeta)\circ (f^*\zeta)^n,\psi_{\nabla}]=0,
$$
it follows that $\psi_{\nabla}$ induces a morphism
$$
\theta: g^*T_{S'} \to T_{Y/S'},
$$
such that $\pi^*\theta=\psi_{\nabla}$. Clearly, $\theta$ is nilpotent, and hence the pair $(g,\theta)$ belongs to the category $\NHIG_{nil}(S')$. \\

{\itshape Step 6.} 
The constructions in Step 3 and Step 5 are converse to each other. Start with $(g: Y\to S,\theta)\in \NHIG_{nil}(S)$. Then we obtain $(f,\nabla)$ in Step 3 and then $(g',\theta')$ out of $(f,\nabla)$ in Step 5. By construction, $f: X\to S$ is the one sitting in the Cartesian diagram in Step 2 and $g': Y'\to S$ in the one by the nonlinear Cartier descent applied to $(f,\nabla_{can})$. Since the nonlinear Cartier descent is an equivalence of categories, $g'$ is canonically isomorphic to $g$. To see $\theta'$ coincides with $\theta$, it suffices to show $\psi_{\nabla}=\pi^*\theta$, because $\theta'$ is the unique descent from $\psi_{\nabla}$ by the nonlinear Cartier descent theorem. But this is exactly what Lemma \ref{p-curvature formula} says. The other direction is argued similarly. It is routine to verify that the constructions preserve morphisms in the categories. Therefore, we may obtain an equivalence of categories. Moreover, when $\theta=0$ in Step 3 and $\psi_{\nabla}=0$ in Step 5, the constructions clearly reduce back to the construction of the nonlinear Cartier descent. The statement that this equivalence restricts to an equivalence on the full subcategories of smooth objects holds because of the fact that the nonlinear Cartier descent has this property. This completes the whole proof. 
\end{proof}
\begin{remark}\label{extend OV}
Argued as \cite[\S3]{LSZ15}, one sees that the equivalence in above theorem generalizes the equivalence constructed by Ogus-Vologodsky \cite[Theorem 2.11 (ii)]{OV} for linear objects with the same nilpotency condition. Therefore, we shall also call the functors \emph{Cartier/inverse Cartier transform}, and denote them by $C/C^{-1}$.  
\end{remark}

\section{Global nonlinear Hodge correspondence}\label{Global nonlinear Hodge correspondence}
In this section, we intend to relax Assumption \ref{assumption of scheme and frobenius lifting} to the $W_2(k)$-liftability of $S$ only. Our technique is the exponential twisting of \cite{LSZ15} and later further developed in \cite{SSW} for principal $G$-bundles. \\

A morphism $X\to S$ is said to be \emph{locally projective}, if for every point $s\in S$, there is an Zariski open neighborhood $U_s$ of $s$ such that $X_{U_s}:=X\times_SU_s\to U_s$ is projective. In the following, we consider \emph{flat} and locally projective morphisms. For a locally noetherian $S$-scheme $T$, let $\mathfrak{Aut}_{X/S}(T)$ be the set of of automorphisms of $X_T=X\times_ST$ over $T$. This contravariant functor $T\mapsto \mathfrak{Aut}_{X/S}(T)$ is representable by a finite type group scheme $\Aut_{X/S}$ over $S$ (see \cite{N} \footnote{In \S6 loc. cit., the representability is stated only for projective flat morphisms. However, it also holds for locally projective flat morphisms, since one may glue the local representable group scheme into a global one.}), which is called the relative automorphism group of $f$.  It is separated but not necessarily smooth. To simply our discussion, we shall make the following assumption on $\Aut_{X/S}$.
\begin{assumption}\label{relative automorphism group scheme}
The connected component $\Aut^0_{X/S}$ of $\Aut_{X/S}$ is a principal $G$-bundle over $S$, where $G$ is a split reductive group over $k$.      
\end{assumption}
Under this assumption, $f_*T_{X/S}$ is locally free and there is a canonical isomorphism 
$$
\mathfrak{aut}_{X/S}\cong f_*T_{X/S}.
$$
where $\mathfrak{aut}_{X/S}$ is the Lie algebra of the relative automorphism group of $f$. Before carrying out the exponential twisting process, we shall digress into the theory of exponential map in characteristic $p$, as developed by Serre, Seitz, Sobaje and Balaji-Deligne-Parameswaran. The following definition is taken from \cite[\S2.1]{BDP}.   
\begin{definition}
Let $R=\cup_i R_i$ be the decomposition of the root system of $G$ into the union of its irreducible sub root systems. For each irreducible $R_i$, one fixes a set of positive roots $R_i^+$. The Coxeter number $h_i$ is defined to be $1+\mathrm{ht}(\alpha_i)$, where $\alpha_i$ is the highest root. The Coxeter number $h_{G}$ of $G$ is defined to be $\max_i\{h_i\}$.      
\end{definition}
Let $\mathfrak{g}$ be the Lie algebra of $G$. A basic fact is that any nilpotent element $\mathfrak n$ in $\mathfrak g$ is $p$-nilpotent, if $p\geq h_{G}$. Let $\mathfrak{g}_{nil}$ (resp. $G^u$) be the reduced subscheme of $\mathfrak g$ (resp. $G$) with points the nilpotent (resp. unipotent) elements. The following result is stated in \cite[\S2.2]{BDP} with a proof in \S6 loc. cit., expanding a sketch by Serre. 
\begin{theorem}[Serre, Balaji-Deligne-Parameswaran]\label{exponential}
Suppose $p>h_G$. Then there exists a unique $G$-equivariant isomorphism (as schemes)
$$
\exp: \mathfrak{g}_{nil}\to G^u,
$$
which restricts to a $B$-equivariant isomorphism of algebraic groups whose differential at the origin is the identity
$$
\exp: (\mathfrak{u},\circ)\cong U,
$$
where $U$ is the unipotent radical of a Borel subgroup $B$ of $G$, $\mathfrak u$ is its Lie algebra and the group law $\circ$ is given by the Campbell-Hausdorff formula. 
\end{theorem}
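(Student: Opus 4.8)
The plan is to construct the map first on the unipotent radical $U$ of a Borel subgroup $B$, where it is forced by the Campbell--Hausdorff formula, and then to propagate it across the whole nilpotent cone by $G$-equivariance; the hypothesis $p>h_G$ enters only to ensure that the denominators appearing are units in $k$.

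First I would extract the key numerical consequence of $p>h_G$. Since $[\mathfrak{g}_\alpha,\mathfrak{g}_\beta]\subseteq\mathfrak{g}_{\alpha+\beta}$ and $\mathrm{ht}(\alpha+\beta)=\mathrm{ht}(\alpha)+\mathrm{ht}(\beta)$, any iterated bracket of length $\ell$ in $\mathfrak{u}=\bigoplus_{\alpha>0}\mathfrak{g}_\alpha$ lands in root spaces of height $\ge\ell$, which vanish once $\ell>h_G-1$. Hence $\mathfrak{u}$ is nilpotent of class at most $h_G-1$, the Campbell--Hausdorff series $x\circ y$ truncates after brackets of length $\le h_G-1$, and the denominators occurring in its degree-$\ell$ terms involve only primes $\le\ell\le h_G-1<p$, so the coefficients are $p$-integral. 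This makes $(\mathfrak{u},\circ)$ into a unipotent algebraic group over $k$ and yields mutually inverse polynomial morphisms $\exp\colon(\mathfrak{u},\circ)\to U$ and $\log\colon U\to(\mathfrak{u},\circ)$ with $d\exp_0=\mathrm{id}$. Because the Campbell--Hausdorff formula is natural under the Lie algebra automorphisms $\mathrm{Ad}(b)|_{\mathfrak{u}}$ for $b\in B$, the map $\exp$ is at once a group isomorphism and $B$-equivariant.

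Next I would globalise by equivariance. As $p>h_G$ the cone $\mathfrak{g}_{nil}$ consists of elements $G$-conjugate into $\mathfrak{u}$ and $G^u$ of elements $G$-conjugate into $U$, so $\mathfrak{g}_{nil}=G\cdot\mathfrak{u}$ and $G^u=G\cdot U$. The $B$-equivariant $\exp$ induces a $G$-equivariant morphism $\Phi\colon\widetilde{\mathcal{N}}:=G\times^B\mathfrak{u}\to G\times^B U\to G^u$ from the Springer variety. To descend $\Phi$ along the proper birational Springer map $\nu\colon\widetilde{\mathcal{N}}\to\mathfrak{g}_{nil}$ I would use that $\mathfrak{g}_{nil}$ is normal and $\nu$ has connected fibres, whence $\nu_*\mathcal{O}_{\widetilde{\mathcal{N}}}=\mathcal{O}_{\mathfrak{g}_{nil}}$; since $G^u$ is affine, $\Phi$ factors uniquely through $\nu$, giving $\exp\colon\mathfrak{g}_{nil}\to G^u$. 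Running the same descent with $\log$ produces an inverse, and the two are mutually inverse because they are so on the dense regular orbit and the varieties are reduced and separated. Uniqueness is then immediate: a $G$-equivariant map is determined by its restriction to $\mathfrak{u}$, as $G\cdot\mathfrak{u}=\mathfrak{g}_{nil}$, and the differential-at-origin and $B$-equivariance assertions are inherited from the construction on $\mathfrak{u}$.

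The main obstacle, I expect, lies in the two characteristic-$p$ inputs rather than in the formal skeleton above. The first is the precise denominator estimate for the truncated Campbell--Hausdorff series, namely that through length $h_G-1$ only primes $<p$ occur; this is exactly what pins the hypothesis to $p>h_G$ and must be handled by an explicit (Dynkin-type) description of the coefficients. The second is the descent step: the identity $\nu_*\mathcal{O}_{\widetilde{\mathcal{N}}}=\mathcal{O}_{\mathfrak{g}_{nil}}$ rests on normality of the nilpotent cone and connectedness of Springer fibres in positive characteristic, facts that again require $p$ to be good or large and where the genuine subtleties of the characteristic-$p$ theory are concentrated.
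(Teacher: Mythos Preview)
The paper does not prove this theorem: it is quoted from \cite[\S2.2, \S6]{BDP} as a black box and used as input for the exponential-twisting construction in Section~\ref{Global nonlinear Hodge correspondence}. There is therefore no ``paper's own proof'' to compare your proposal against.

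That said, your sketch is a reasonable outline of the strategy one finds in \cite{BDP} (and in Serre's original sketch): build $\exp$ on $\mathfrak{u}$ where the truncated Campbell--Hausdorff series makes sense for $p>h_G$, then spread it over $\mathfrak{g}_{nil}$ by $G$-equivariance via the Springer resolution, using normality of the nilpotent cone to descend. One point you gloss over is the actual construction of the map $\exp\colon \mathfrak{u}\to U$: the Campbell--Hausdorff formula gives you a group law $\circ$ on $\mathfrak{u}$, but identifying $(\mathfrak{u},\circ)$ with $U$ requires a separate argument (e.g.\ working in a faithful representation where $\exp$ is the truncated matrix exponential, or inducting along the central series using the root-group parametrisations $x_\alpha$); you treat this as if it is automatic from BCH, which it is not. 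You are right, however, that the genuine characteristic-$p$ content is concentrated in the denominator control and in the descent step, and you flag both honestly.
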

Suppose $p>h_G$. Let $\tilde S$ be a $W_2(k)$-lifting of $S$. Let $(g,\theta)$ be a nonlinear Higgs bundle \footnote{Here we abuse bit the notion of a nonlinear Higgs bundle (see Definition \ref{nonlinear flat bundle and Higgs bundle}. The same abuse occurs in several other places in the article.), in the sense that the structural morphism is only required to be flat.} over $S'$, where $g: Y\to S'$ is a flat and locally projective morphism satisfying Assumption \ref{relative automorphism group scheme} (replace $S$ with $S'$), and $\theta$ is locally nilpotent. Take a finite open covering $S=\cup_{\alpha}S_{\alpha}$ such that the following properties hold: 
\begin{itemize}
    \item [(i)] Each $(S_{\alpha},F_{S_{\alpha}/k})$ is $W_2(k)$-liftable; 
    \item [(ii)] Each $S_{\alpha\beta}:=S_{\alpha}\cap S_{\beta}$ is affine;
    \item [(iii)] The connected component of the relative automorphism group of $g$ is trivializable over each $S'_{\alpha\beta}$.  
\end{itemize}
Choose and then fix a lifting $\tilde F_{\alpha}$ of $F_{S_{\alpha}/k}$ over $W_2(k)$. Set 
$$
\zeta_{\alpha}:=\frac{d\tilde F_{\alpha}}{[p]}: T_{S_{\alpha}}\to F^*_{S_{\alpha}/k}T_{S'_{\alpha}}.
$$
Then Deligne-Illusie's lemma (see e.g. \cite[Lemma 2.1]{LSZ15}) provides an element 
$$h_{\alpha\beta}\in F_{S'/k}^*T_{S'}(S'_{\alpha\beta})$$ such that 
\begin{itemize}
    \item [(i)] $dh_{\alpha\beta}=\zeta_{\alpha}-\zeta_{\beta}$; 
    \item [(ii)] $h_{\alpha\beta}+h_{\beta\gamma}=h_{\alpha\gamma}$ over $S_{\alpha\beta\gamma}:=S_{\alpha}\cap S_{\beta}\cap S_{\gamma}$. 
\end{itemize}
Let $g^{'}: Y^{'}\to S$ be the base change of $g$ via $F_{S'/k}: S\to S'$. We are going to twist $g^{'}$ as follows: Consider the composite (where $\theta_{\alpha\beta}$ is the restriction of $\theta$ to $S_{\alpha\beta}$)
$$
\sO_{Y'_{\alpha\beta}}=g'^*\sO_{S'_{\alpha\beta}}\stackrel{g'^*h_{\alpha\beta}}{\to} g'^*F^*_{S'_{\alpha\beta}/k}T_{S'_{\alpha\beta}}=\pi^*g^*T_{S'_{\alpha\beta}}\stackrel{\pi^*\theta_{\alpha\beta}}{\to} T_{Y'_{\alpha\beta}/S_{\alpha\beta}},
$$
which gives an element in $H^0(S_{\alpha\beta},g'_*T_{Y'/S})\cong T_{Y'/S}(Y'_{\alpha\beta})$. By the functorial property of $\mathfrak{Aut}$-functor, the representing $S$-group scheme $\Aut_{Y'/S}$ of $\mathfrak{Aut}_{Y'/S}$ is canonically isomorphic to $F_{S'/k}^*\Aut_{Y/S'}$. So is its connected component. By Assumption \ref{relative automorphism group scheme}, $\Aut^0_{Y/S'}$ is a principal $G$-bundle, so is $\Aut^0_{Y'/S}$. Hence the above discussion shows that
$$
\pi^{*}\theta_{\alpha\beta}\circ g'^*h_{\alpha\beta}\in \mathfrak{g}(\sO_{S_{\alpha\beta}}).
$$
As $\theta$ is locally nilpotent, it actually lies in $\mathfrak{g}_{nil}(\sO_{S_{\alpha\beta}})$. Moreover, because of the integrability of $\theta$, it follows from Engel's theorem that the element lies in $\mathfrak{u}(\sO_{S_{\alpha\beta}})$ for some $\mathfrak{u}$ (which may not be unique). Applying Theorem \ref{exponential} to this element, we obtain an element 
$$
g_{\alpha\beta}:=\exp (-\pi^*\theta_{\alpha\beta}\circ g'^*h_{\alpha\beta})\in G^u(\sO_{S_{\alpha\beta}})\subset G(\sO_{S_{\alpha\beta}}),
$$
which is naturally regarded as a local section of $\Aut^0_{Y'/S}\to S$ over $S_{\alpha\beta}$. On the other hand, applying Theorem \ref{local nonlinear Hodge correspondence} to $(g_{\alpha}: g^{-1}(S'_{\alpha})\to S'_{\alpha}, \theta_{\alpha}:=\theta|_{S'_{\alpha}})$, we obtain a local nonlinear flat bundle $(g'_{\alpha}, \nabla_{\alpha})$, where 
$$
\nabla_{\alpha}:=\nabla_{can}-\pi^*\theta_{\alpha}\circ g^{'*}_{\alpha}\zeta_{\alpha}.
$$
Note that $\theta$ is actually locally $p$-nilpotent. 
\begin{lemma}\label{gluing lemma}
Regard $\{g_{\alpha\beta}\}$ as an $S_{\alpha\beta}$-automorphism of $Y'_{\alpha\beta}$. Then the collection $\{g_{\alpha\beta}\}$ of local automorphisms satisfies the following properties:
\begin{itemize}
    \item [(i)] $dg_{\alpha\beta}\circ \nabla_{\alpha}|_{S_{\alpha\beta}}=\nabla_{\beta}|_{S_{\alpha\beta}}$ over $S_{\alpha\beta}$;
    \item [(ii)] $g_{\alpha\beta}\cdot g_{\beta\gamma}=g_{\alpha\gamma}$ over $S_{\alpha\beta\gamma}$.
\end{itemize}
\end{lemma}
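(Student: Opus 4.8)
The plan is to prove the two assertions separately, reducing each—by means of the commutativity furnished by $[\theta,\theta]=0$ (Claim \ref{commutativity})—to a first-order statement, exactly in the spirit of the linear exponential twisting of \cite{LSZ15}. Throughout I abbreviate $\eta_{\alpha\beta}:=\pi^*\theta_{\alpha\beta}\circ g'^*h_{\alpha\beta}$, a vertical vector field (section of $T_{Y'/S}$) over $Y'_{\alpha\beta}$, so that $g_{\alpha\beta}=\exp(-\eta_{\alpha\beta})$. Recall also that, since $\theta$ is locally $p$-nilpotent, $\nabla_\alpha=\nabla_{can}-\pi^*\theta\circ g'^*\zeta_\alpha$, and that $\nabla_{can}$ is the global canonical connection attached to $g'$, independent of $\alpha$. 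The two facts I will lean on repeatedly are that any two images of $\pi^*\theta$ commute (the $n=n'=0$ case of Claim \ref{commutativity}), and that $\pi^*\theta$ has coefficients annihilated by $\partial/\partial s_k$ (the vanishing $\partial(\pi^*\theta_{jk})/\partial s_k=0$ established in Step 2 of the proof of Theorem \ref{local nonlinear Hodge correspondence}).

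For (i), I would work over an affine $V\subset S$ with coordinates $\{s_1,\dots,s_m\}$ and test on $D=\partial/\partial s_i$. First I isolate the target difference: the Deligne--Illusie identity $\zeta_\alpha-\zeta_\beta=dh_{\alpha\beta}$ gives $(\zeta_\alpha-\zeta_\beta)(D)=\partial_D h_{\alpha\beta}$, and combined with the vanishing of $\partial_{s_k}(\pi^*\theta_{jk})$ one gets
\[
\nabla_\beta(D)-\nabla_\alpha(D)=\pi^*\theta\bigl(g'^*\partial_D h_{\alpha\beta}\bigr)=\partial_D\eta_{\alpha\beta}=[\nabla_{can}(D),\eta_{\alpha\beta}],
\]
the last equality because $[\nabla_{can}(D),T_k]=0$ and $\partial_{s_i}$ annihilates the coefficients of $\pi^*\theta$. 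It then remains to compute the gauge action of $g_{\alpha\beta}$ on $\nabla_\alpha(D)=\nabla_{can}(D)+A_\alpha(D)$, where $A_\alpha(D)=-\pi^*\theta(g'^*\zeta_\alpha(D))$ is vertical. On $A_\alpha(D)$ the differential acts by $\mathrm{Ad}(\exp(-\eta_{\alpha\beta}))$; since both $\eta_{\alpha\beta}$ and $A_\alpha(D)$ are images of $\pi^*\theta$ they commute by Claim \ref{commutativity}, so this action is trivial and every higher adjoint term drops out. On the horizontal part $\nabla_{can}(D)$ the sole contribution is the ``$dg_{\alpha\beta}\cdot g_{\alpha\beta}^{-1}$'' term, whose first-order value is $[\nabla_{can}(D),\eta_{\alpha\beta}]=\partial_D\eta_{\alpha\beta}$; all Campbell--Hausdorff corrections again vanish because $\eta_{\alpha\beta}$ commutes with its derivative $\partial_D\eta_{\alpha\beta}$ (another $\pi^*\theta$-image). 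Adding the two contributions yields $dg_{\alpha\beta}(\nabla_\alpha(D))=\nabla_\alpha(D)+\partial_D\eta_{\alpha\beta}=\nabla_\beta(D)$, which is (i) in the sense of Definition \ref{nonlinear flat bundle and Higgs bundle}.

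For (ii), I would argue purely on the Lie-algebra side. Linearity of $\pi^*\theta\circ g'^*(-)$ together with the additive cocycle relation $h_{\alpha\beta}+h_{\beta\gamma}=h_{\alpha\gamma}$ gives $\eta_{\alpha\beta}+\eta_{\beta\gamma}=\eta_{\alpha\gamma}$ over $S_{\alpha\beta\gamma}$, while the $n=n'=0$ case of Claim \ref{commutativity} gives $[\eta_{\alpha\beta},\eta_{\beta\gamma}]=0$. Thus $-\eta_{\alpha\beta}$ and $-\eta_{\beta\gamma}$ are commuting nilpotent sections; being commuting nilpotents they lie in a common $\mathfrak u$ (as in the Engel's-theorem argument preceding the lemma), so the exponential of Theorem \ref{exponential} applies and its Campbell--Hausdorff group law degenerates to addition. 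Hence $g_{\alpha\beta}\cdot g_{\beta\gamma}=\exp(-\eta_{\alpha\beta})\exp(-\eta_{\beta\gamma})=\exp(-\eta_{\alpha\gamma})=g_{\alpha\gamma}$.

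I expect the main obstacle to be the gauge computation in (i): in characteristic $p$ there is no analytic flow for $-\eta_{\alpha\beta}$, so the action of $dg_{\alpha\beta}$ on vector fields—both its adjoint part and its ``$dg\cdot g^{-1}$'' part—must be controlled through the exponential of Theorem \ref{exponential} and the associated Campbell--Hausdorff formalism rather than by naive differentiation. What makes this tractable is precisely the commutativity coming from $[\theta,\theta]=0$: exactly as in the verification of Claim \ref{universal Lie polynomial}, it forces every bracket of length $\geq 1$ among the relevant $\pi^*\theta$-images to vanish, collapsing the exponential gauge action to the single first-order term $\partial_D\eta_{\alpha\beta}$. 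Once this collapse is justified rigorously, (i) follows from the displayed difference computation, and (ii) is then essentially formal.
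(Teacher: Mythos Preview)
Your treatment of (ii) matches the paper's exactly: both use the additive cocycle $h_{\alpha\beta}+h_{\beta\gamma}=h_{\alpha\gamma}$ together with the commutativity of $\pi^*\theta$-images to collapse Campbell--Hausdorff to plain addition.

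For (i) you take a genuinely different decomposition. The paper splits the identity into two separate equalities, $dg_{\alpha\beta}\circ\nabla_{can}=\nabla_{can}$ and $dg_{\alpha\beta}\circ(\pi^*\theta\circ g'^*\zeta_\alpha)=\pi^*\theta\circ g'^*\zeta_\beta$; it argues the first by observing that the pushforward of $\nabla_{can}$ by an $S$-automorphism is again a transversal foliation with vanishing $p$-curvature and hence ``is just $\nabla_{can}$'', and it obtains the second by writing $dg_{\alpha\beta}(\pi^*\theta\circ g'^*\zeta_\alpha)=d_e\bigl[\exp(-\eta_{\alpha\beta})\cdot\exp(\pi^*\theta\circ g'^*\zeta_\alpha)\bigr]$ and invoking $d_e\exp=\id$ from Theorem~\ref{exponential}. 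You instead keep track of horizontal and vertical pieces separately under the pushforward: the adjoint action on the vertical $A_\alpha(D)$ is trivial by Claim~\ref{commutativity}, while $\nabla_{can}(D)$ acquires the single correction $[\nabla_{can}(D),\eta_{\alpha\beta}]$, which you then identify with $\nabla_\beta(D)-\nabla_\alpha(D)$ via $dh_{\alpha\beta}=\zeta_\alpha-\zeta_\beta$.

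Your route is closer to the linear gauge-transformation picture of \cite{LSZ15} and makes the mechanism more visible: since $h_{\alpha\beta}$ genuinely varies over $S$ (not $S'$), $g_{\alpha\beta}$ need not preserve the subsheaf $\sO_Y\subset\sO_{Y'}$, so one should not expect $dg_{\alpha\beta}$ to fix $\nabla_{can}(D)$ on the nose; your bookkeeping shows precisely where the correction $\partial_D\eta_{\alpha\beta}$ lands. The paper's packaging absorbs this same correction into the vertical computation, trading transparency for brevity. The technical hurdle you identify---that the pushforward $(g_{\alpha\beta})_*$ must be controlled through Serre's exponential and Campbell--Hausdorff rather than a naive flow---is exactly the crux in either formulation, and your plan to kill all higher brackets via the $[\pi^*\theta,\pi^*\theta]=0$ commutativity is the right one.
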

\begin{proof}
Let us verify (ii) first. As said above, the image of $\theta$ over $S_{\alpha\beta\gamma}$ is contained in a commutative subalgebra of $\mathfrak{u}(\sO_{S_{\alpha\beta\gamma}})$. By Theorem \ref{exponential}, the exponential map over $\mathfrak{u}$ is a group homomorphism. Hence we have
\begin{eqnarray*}
 \exp(-\pi^*\theta\circ g'^*h_{\alpha\beta})\cdot \exp(-\pi^*\theta\circ g'^*h_{\beta\gamma})&=&\exp(-\pi^*\theta\circ g'^*h_{\alpha\beta}-\pi^*\theta\circ g'^*h_{\beta\gamma})\\
 &=&\exp(-\pi^*\theta\circ (g'^*h_{\alpha\beta}+g'^*h_{\beta\gamma}))\\
 &=&\exp(-\pi^*\theta\circ g'^*h_{\alpha\gamma}).
\end{eqnarray*}
This shows (ii). The equality in (i) boils down to the following two equalities
$$
dg_{\alpha\beta}\circ \nabla_{can}=\nabla_{can};\quad dg_{\alpha\beta}\circ \pi^*\theta\circ g^{'*}_{\alpha}\zeta_{\alpha}=\pi^*\theta\circ g^{'*}_{\beta}\zeta_{\beta}.
$$
As $g_{\alpha\beta}$ is an $S_{\alpha\beta}$-automorphism of $Y'_{\alpha\beta}$, the image of $dg_{\alpha\beta}\circ \nabla_{can}$ defines a transversal foliation of $g'_{\alpha\beta}$. Since $dg_{\alpha\beta}$ preserves Lie bracket, the foliation has also vanishing $p$-curvature. In other words, it is just the canonical connection $\nabla_{can}$. This shows the first equality. Note that $\pi^*\theta\circ g^{'*}_{\alpha}\zeta_\alpha|_{S_{\alpha\beta}}\in \mathfrak{u}(\sO_{S_{\alpha\beta}})$. Thus by Theorem \ref{exponential}, it follows that
\begin{eqnarray*}
 dg_{\alpha\beta}(\pi^*\theta\circ g^{'*}_{\alpha}\zeta_{\alpha})&=& d_{e}\exp (-\pi^*\theta\circ g'^*h_{\alpha\beta})\cdot \exp (\pi^*\theta\circ g^{'*}_{\alpha}\zeta_{\alpha}) \\
 &=&d_{e}\exp (-\pi^*\theta\circ g'^*h_{\alpha\beta}+\pi^*\theta\circ g^{'*}_{\alpha}\zeta_{\alpha})\\
 &=&-\pi^*\theta\circ g^{'*}h_{\alpha\beta}+\pi^*\theta\circ g^{'*}_{\alpha}\zeta_{\alpha}\\
 &=&\pi^*\theta\circ g^{'*}(-h_{\alpha\beta}+\zeta_{\alpha})\\
 &=&\pi^*\theta\circ g^{'*}_{\beta}\zeta_{\beta}.
\end{eqnarray*}
This completes the proof. 
\end{proof}
Because of (ii) in the lemma, we may glue these local morphisms $\{g'_{\alpha}: Y'_{\alpha}\to S_{\alpha}\}_{\alpha}$ via $\{g_{\alpha\beta}\}_{\alpha\beta}$, into a new morphism $f: X\to S$-this is the twist of the original morphism $g'$. Furthermore, because of (i) of the lemma, we also obtain a flat connection $\nabla$ on $f$. By construction, the morphism $f$ is smooth, locally projective, and satisfies Assumption \ref{relative automorphism group scheme} with the same $G$ for $g$. It is routine to check that the pair $(f,\nabla)$, up to isomorphism, is independent of the choices of open coverings on $S$ and local Frobenius liftings. \\

The above process is reversible. Indeed, given a nonlinear flat bundle $(f,\nabla)$, where $f$ is flat, locally projective and satisfies Assumption \ref{relative automorphism group scheme}, we apply Theorem \ref{local nonlinear Hodge correspondence} (and also Remark \ref{local correspondence preserves flatness} for the issue on flatness of morphisms) in the converse direction to obtain the collection of local nonlinear Higgs bundles, and then replace $\pi^*\theta$ in the above discussion with $\psi_{\nabla}$ to obtain the gluing datum. Gluing them, we obtain a global nonlinear Higgs bundle $(g,\theta)$, where $g$ is flat, locally projective and satisfies Assumption \ref{relative automorphism group scheme}. \\

To state the main result of this section, we form two categories: Fix $G$ as above. Let $\NMIC_{nil}^{G}(S)$ (resp. $\NHIG_{nilp}^{G}(S')$) be the full subcategory of $\NMIC_{nil}^G(S)$ (resp. $\NHIG_{nilp}(S')$) consisting of pairs $(f,\nabla)$ (resp. $(g,\theta)$), where $f$ (resp. $g$) is flat, locally projective and the connected component of its relative automorphism group is a principal $G$-bundle.

\begin{theorem}\label{global nonlinear Hodge correspondence}
Assume that $S$ is $W_2(k)$-liftable. Let $G$ be a split reductive group over $k$ with $h_{G}<p$. Then a $W_2(k)$-lifting of $S$ induces an equivalance of categories between $\NMIC^G_{nil}(S)$ and $\NHIG^G_{nil}(S')$. The equivalence is obtained by gluing the local equivalences in Theorem \ref{local nonlinear Hodge correspondence} via exponential twisting. 
\end{theorem}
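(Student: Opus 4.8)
The plan is to promote the local construction preceding the statement to a pair of functors and to check that they form an equivalence, reducing everything to Theorem \ref{local nonlinear Hodge correspondence} and Lemma \ref{gluing lemma}. On objects both functors are already available: for $(g,\theta)\in\NHIG^G_{nil}(S')$, Lemma \ref{gluing lemma} shows that the automorphisms $\{g_{\alpha\beta}\}$ satisfy the cocycle identity and intertwine the local connections $\nabla_\alpha$, so the local flat bundles $(g'_\alpha,\nabla_\alpha)$ glue to a pair $(f,\nabla)$; the reverse functor replaces $\pi^*\theta$ by $\psi_\nabla$ throughout. First I would check that the glued objects lie in the stated categories. Since $X$ is by construction Zariski-locally on $S$ isomorphic to $Y'=F_{S'/k}^*Y$, flatness and local projectivity are inherited (cf. Remark \ref{local correspondence preserves flatness}), and $f$ is smooth by the local theorem. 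The cocycle $\{g_{\alpha\beta}\}$ acts on $\Aut_{Y'/S}$ by conjugation, so $\Aut^0_{X/S}$ is an inner twist of $\Aut^0_{Y'/S}=F_{S'/k}^*\Aut^0_{Y/S'}$ and is therefore again a principal $G$-bundle; thus Assumption \ref{relative automorphism group scheme} persists. Nilpotency of $\psi_\nabla$ (resp. of $\theta$) is a local condition and is supplied by Theorem \ref{local nonlinear Hodge correspondence} together with Lemma \ref{p-curvature formula}.

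Next I would address well-definedness, i.e.\ independence of the auxiliary choices. Passing to a common refinement handles the choice of covering. For the choice of local Frobenius liftings, two liftings $\tilde F_\alpha,\tilde F'_\alpha$ differ by an element of $F_{S_\alpha/k}^*T_{S'_\alpha}$, and, exactly as in the linear theory of \cite{LSZ15}, the induced local flat bundles are identified by an exponential gauge transformation; these gauge transformations are again of the form $\exp(\pi^*\theta\circ(-))$ and hence are compatible with the cocycle $\{g_{\alpha\beta}\}$, yielding a canonical isomorphism between the two glued objects. This is a direct globalization of the independence already recorded after Lemma \ref{gluing lemma}.

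The heart of the argument, and the step I expect to be the main obstacle, is functoriality. Let $h\colon (g_1,\theta_1)\to(g_2,\theta_2)$ be a morphism in $\NHIG^G_{nil}(S')$, so $h^*\theta_2=h_*\circ\theta_1$, and let $h'\colon Y'_1\to Y'_2$ be its pullback along $F_{S'/k}$. To glue the local morphisms $h'|_{Y'_{1,\alpha}}$ into an $S$-morphism of the twists $X_1\to X_2$ I must verify the intertwining
\begin{equation*}
h'\circ g^{(1)}_{\alpha\beta}=g^{(2)}_{\alpha\beta}\circ h'\quad\text{on }Y'_{1,\alpha\beta}.
\end{equation*}
Writing $g^{(i)}_{\alpha\beta}=\exp(\xi_i)$ with $\xi_i=-\pi^*\theta^{(i)}_{\alpha\beta}\circ g'^*h_{\alpha\beta}$, the common factor $g'^*h_{\alpha\beta}$ is pulled back from the base, so the compatibility $h^*\theta_2=h_*\circ\theta_1$ forces $\xi_2$ to be $h'$-related to $\xi_1$; functoriality of the algebraic exponential of Theorem \ref{exponential} with respect to $h'$-related relative vector fields then yields the displayed identity. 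The delicate point is that $h$ need not be an isomorphism, so there is no homomorphism $\Aut_{Y'_1/S}\to\Aut_{Y'_2/S}$ to invoke directly; instead one argues at the level of the nilpotent relative vector fields $\xi_i\in T_{Y'_i/S}$ and their flows, using that $\exp$ intertwines $h'$-related infinitesimal automorphisms. The same computation, run with $\psi_\nabla$ in place of $\pi^*\theta$ and invoking Lemma \ref{basic property of p-curvature}, gives functoriality of the reverse construction.

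Finally, the two functors are mutually quasi-inverse. Over each $S_\alpha$ this is precisely Theorem \ref{local nonlinear Hodge correspondence}, whose quasi-inverse local functors $C,C^{-1}$ are compatible with restriction to overlaps. The gluing data match because the cocycle built from $\theta$ and the one built from $\psi_\nabla$ correspond under the local equivalence, via $\psi_\nabla=\pi^*\theta$ (Lemma \ref{p-curvature formula}). Hence the unit and counit isomorphisms of the local equivalences glue, by the well-definedness already established, to natural isomorphisms of the global functors, completing the proof.
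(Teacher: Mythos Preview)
Your proposal follows the same approach as the paper: the theorem is stated as a summary of the exponential-twisting construction developed in the text preceding it, and the paper supplies no separate proof beyond that construction, Lemma \ref{gluing lemma}, and the remark that independence of choices is routine. You spell out more than the paper does---in particular the functoriality step and the explicit matching of cocycles via $\psi_\nabla=\pi^*\theta$---but the overall strategy, the key inputs (Theorem \ref{local nonlinear Hodge correspondence}, Lemma \ref{gluing lemma}, Theorem \ref{exponential}), and the structure of the argument are identical.
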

\begin{remark}
Vector bundles over $S$ are not locally projective, rather quasi-projective. Therefore, we can not say the above result generalize the corresponding results for vector bundles \cite{OV}, \cite{LSZ15} or principal $G$-bundles \cite{SSW}. But it seems possible to extend the above discussions to locally quasi-projective morphisms, to which we choose to study on a later occasion.   
\end{remark}

\section{Nonlinear Fontaine modules}\label{nonlinear fontaine module section}
Fontaine modules (also known as Fontaine-Laffaille-Faltings modules) are analogues of variations of Hodge structure in positive/mixed characteristic. We refer our reader to \cite[II. d)]{F} or \cite[\S2]{LSZ19} for the notion. In this section, we would like to provide a nonlinear generalization. \\

Let $\mathbf S$ be a smooth $W(k)$-scheme, equipped with a Frobenius lifting $ \mathbf{F}:\hat{\mathbf{S}}\to \hat{\mathbf{S}}$ on its $p$-adic completion. For $n\in \N$, set $S_n=\mathbf{S}\times_{W(k)}W(k)/p^{n+1}$ and $S=S_0$. Let $F_n: S_n\to S_n$ be the reduction of $\mathbf{F}$.  
\begin{definition}\label{nonlinear fontaine module}
Fix $n\geq 0$. A nonlinear Fontaine module over $S_n$ is a quadruple $(f,\nabla,Fil,\varphi)$, where 
\begin{itemize}
    \item [(i)] $f: X\to S_n$ is a flat morphism over $W_n:=W_n(k)$;
    \item [(ii)] $\nabla$ is an integrable connection on $f$;
    \item [(iii)] $Fil$ is a filtration structure on $f$ given as follows: $${\tilde f}: \mathscr{X} \to \A^1_{S_n}:=\Spec(W_n[t])\times_{W_n}S_n$$ is a flat $W_n$-morphism sitting in the following Cartesian diagram
    \begin{equation*}
		\begin{gathered}
		\xymatrix{
			X \ar[r]^{} \ar[d]_{f} & \mathscr{X} \ar[d]^{\tilde f} \\
			S_n \ar[r]^{t=1} & \A^1_{S_n},
		}
		\end{gathered}  
		\end{equation*}
    together with a $\Gm$-action $\Gm\times \mathscr{X}\to \mathscr{X}$ which covers the standard $\Gm$-action on $\A^1_{S_n}$ (trivial on the factor $S_n$). For a $W_n$-point $a$ of $\A^1_{W_n}$, denote $f_a: \mathscr{X}_t\to S_n$ for the fiber of $\tilde f$ over $a$;
    \item [(iv)]  $\varphi$ is a Frobenius structure on $(f,\nabla,Fil)$ given as follows: $\varphi: X\to \mathscr{X}_p$ is an $F_n$-linear morphism, that is, $\varphi$ sits in the following commutative diagram 
    \begin{equation*}
		\begin{gathered}
		\xymatrix{
			X \ar[r]^{\varphi} \ar[d]_{f} & \mathscr{X}_p \ar[d]^{f_p} \\
			S_n \ar[r]^{F_n} & S_n,
		}
		\end{gathered}  
		\end{equation*}
         such that $\psi: X \to F_{n}^*\mathscr{X}_p$ is an isomorphism, where $\varphi=\pi\circ \psi$ and $\pi: F_{n}^*\mathscr{X}_p\to \mathscr{X}_p$ is the natural projection,
\end{itemize}
satisfying the following two conditions:
\begin{itemize}
    \item [(a)] Griffiths transversality on $Fil$. $\nabla: f^*T_{S/k}\to T_{X/S}$ is extendable to a $\Gm$-equivariant morphism
    $$
    \tilde \nabla: \tilde f^*T_{\A^1_{S_n}/\A^1_{W_n}}\to T_{\mathscr{X}/\A^1_{W_n}}
    $$
    whose composite with the natural projection $T_{\mathscr{X}/\A^1_{W_n}}\to \tilde f^*T_{\A^1_{S_n}/\A^1_{W_n}}$ equals $t\cdot id$. Such an extension, if exists, must be unique. Set $\theta=\tilde \nabla|_{\mathscr{X}_p}$;
\item [(b)] Parallelism on $\varphi$. The following diagram is commutative:
\begin{equation*}
		\begin{gathered}
		\xymatrix{
			f_1^*T_{S_n} \ar[r]^{\nabla} \ar[d]_{f_1^*\frac{d F_{n+1}}{[p]}} & T_{X} \ar[d]^{d\varphi} \\
			f_1^*F_n^*T_{S_n}=\varphi^*f_p^*T_{S_n} \ar[r]^{\quad \quad \quad \varphi^*\theta} & \varphi^*T_{\mathscr{X}_p}.
		}
		\end{gathered}  
		\end{equation*}
 
\end{itemize}
For two nonlinear Fontaine modules $(f_i,\nabla_i,Fil_i,\varphi_i), i=1,2$ over $S_n$, a morphism from $(f_1,\nabla_1,Fil_1,\varphi_1)$ to $(f_2,\nabla_2,Fil_2,\varphi_2)$ is a morphism $(f_1,\nabla_1)\to (f_2,\nabla_2)$ of nonlinear flat bundles over $S_n$ which respects the filtration structures and the Frobenius structures. 
\end{definition}
Though we have defined the notion over an arbitrary truncated Witt ring, we choose to focus on the $n=0$ case in what follows. Note if we stick to the characteristic $p$ situation, it is unnecessary to assume the existence of $\mathbf{S}$ and $\mathbf{F}$. We assume simply $S$ to be local and we fix a $W_2$-lifting $(\tilde S,\tilde F_S)$ of the pair $(S,F_S)$. \\

It is worthwhile to explain where the key ingredients of the above notion come from. The filtration structure and the Griffiths transversality condition are abstracted from the Hodge filtration on nonabelian cohomology \cite{Si97}. Let us first recall the following well-known construction: Let $\sV$ be a locally free $\sO_S$-module of finite rank, and $Fil^{\bullet}$ a finite decreasing and locally split (but not necessarily effective) filtration on $\sV$. We may loosely call such a filtration a Hodge filtration. The Rees construction (see e.g. \cite[\S5]{Si97}) associates the pair $(\sV,Fil^{\bullet})$ a locally free sheaf
$\xi(\sV,Fil^{\bullet})$ over $\A^1_S$, equipped with a $\G_m$-structure (where the $
\G_m$-action on $\sO_{\A^1_S}=\sO_S[t]$ comes from the standard $\G_m$-action on $\A^1$), i.e. $\xi(\sV,Fil^{\bullet})$ is a $\G_m$-equivariant locally free sheaf of finite rank over $\A^1_S$. In concrete term, $\xi(\sV,Fil^{\bullet})$ is the $\sO_S[t]$-submodule 
$$
\sum_iFil^i\otimes t^{-i}\subset \sV\otimes _{\sO_S}\sO_{S}[t,t^{-1}],
$$
and the $\G_m$-action is induced from the natural one on $\sV\otimes _{\sO_S}\sO_{S}[t,t^{-1}]$. Taking the fiber of $\xi(\sV,Fil^{\bullet})$ at $t=1$, we get back $\sV$ as well as $Fil^{\bullet}$: The $\G_m$-action gives a trivialization of $\xi(\sV,Fil^{\bullet})$ over $\G_m\subset \A^1$. An element $v\in Fil^i\subset \sV$ if and only if $t^{-i}v\in \xi(\sV,Fil^{\bullet})$. Using the same proof as Simpson gave for $S=\Spec\ k$ in loc. cit., the $\xi$-construction and taking the fiber at $t=1$ establish a natural bijection between the set of pairs $(\sV,Fil^{\bullet})$ and the set of $\G_m$-equivariant locally free $\sO_{\A^1_S}$-modules of finite rank. The following lemma should be known to experts.
\begin{lemma}\label{taking grading}
Let $\sV$ be as above. Let $Fil^{\bullet}$ be a finite decreasing filtration on $\sV$. Then it is locally split if and only if $\xi(\sV,Fil^{\bullet})$ is locally free. 
\end{lemma}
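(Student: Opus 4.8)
The plan is to reduce immediately to the affine situation $S=\Spec R$, with $\sV$ associated to a finitely generated projective $R$-module $M$, so that $\xi=\xi(\sV,Fil^\bullet)$ becomes the finitely generated $\Z$-graded $R[t]$-module $\bigoplus_i Fil^i t^{-i}\subset M[t,t^{-1}]$, where the summand $Fil^i t^{-i}$ sits in $t$-degree $-i$ and multiplication by $t$ realizes the inclusion $Fil^i\hookrightarrow Fil^{i-1}$. Since the statement is Zariski-local on $S$ and both "locally split" and "locally free" are local conditions, it suffices to prove the equivalence over a sufficiently small affine. The first and decisive observation I would record is the computation of the two special fibers of $\xi$ along $\A^1_S\to S$: restriction to $t=1$ recovers $M$, while reduction modulo $t$ gives a canonical isomorphism
\[
\xi/t\xi \;\cong\; \bigoplus_i Fil^i/Fil^{i+1} \;=\; \mathrm{gr}^\bullet_{Fil}M,
\]
obtained directly from $t\xi=\bigoplus_i Fil^{i+1}t^{-i}$ after reindexing.

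For the "if" direction I would argue as follows. Assuming $Fil^\bullet$ locally split, choose, after shrinking $S$, a decomposition $M=\bigoplus_p M_p$ with each $M_p$ free and $Fil^i=\bigoplus_{p\ge i}M_p$. Collecting the contributions of each $M_p$ inside $M[t,t^{-1}]=\bigoplus_p M_p[t,t^{-1}]$ yields
\[
\xi=\bigoplus_p\Big(\bigoplus_{i\le p}M_p t^{-i}\Big)=\bigoplus_p M_p\cdot t^{-p}R[t],
\]
which is a finite direct sum of free $R[t]$-modules, hence free; therefore $\xi$ is locally free. This is a short direct verification and I do not expect any difficulty here.

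For the converse the input is that $\xi$ is a vector bundle on $\A^1_S$. Restricting to the zero section $t=0$ and invoking the fiber computation above shows that $\mathrm{gr}^\bullet_{Fil}M=\bigoplus_i \mathrm{gr}^i$ is locally free over $R$; since each $\mathrm{gr}^i$ is a direct summand of a finitely generated locally free module, it is itself finitely generated projective, hence locally free. I would then split the filtration by descending induction on $i$: using projectivity of $\mathrm{gr}^i$, the extension $0\to Fil^{i+1}\to Fil^i\to \mathrm{gr}^i\to 0$ admits, Zariski-locally, a section, and composing these sections produces a local isomorphism $M\cong\bigoplus_i\mathrm{gr}^i$ under which $Fil^i$ corresponds to $\bigoplus_{j\ge i}\mathrm{gr}^j$. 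This is precisely a local splitting of $Fil^\bullet$.

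The main obstacle is concentrated in the "only if" direction, and it is really the single structural point that the reduction of $\xi$ modulo $t$ computes the associated graded rather than $M$ itself; once this is in place, local freeness of $\xi$ transfers to local freeness of the central fiber, and the standard fact that a direct summand of a locally free module is locally free lets me peel off each $\mathrm{gr}^i$ and run the inductive splitting. The only points requiring care are verifying that $\xi$ is finitely generated over $R[t]$ (which follows from finiteness of the filtration) and that the base change $\xi\otimes_{R[t]}R[t]/(t)$ genuinely computes $\xi/t\xi$; both are routine, so I expect no serious technical difficulty beyond organizing these observations.
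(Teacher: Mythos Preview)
Your proposal is correct and follows essentially the same route as the paper: both directions hinge on the identification $\xi/t\xi\cong\mathrm{gr}_{Fil}\sV$, from which local freeness of $\xi$ forces local freeness of each graded piece and hence a local splitting by the obvious induction; the forward direction via an adapted basis/direct-sum decomposition is likewise the same. The only cosmetic difference is that the paper writes out the isomorphism $\bigoplus_i M^i/M^{i+1}\to\tilde M/t\tilde M$ by hand and checks injectivity, whereas you obtain it in one line from $t\xi=\bigoplus_i Fil^{i+1}t^{-i}$.
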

\begin{proof}
One direction is given in \cite[page 237]{Si97}: Suppose $Fil^{\bullet}$ is locally split. Then locally on $S$, there exists a so-called adapted basis $\{v_i\}$ of $\sV$ such that $v_i\in Fil^{p(i)}-Fil^{p(i)+1}$. Then $\xi(\sV,Fil^{\bullet})$ is locally the free $\sO_S[t]$-module with basis $\{t^{-p(i)}v_i\}$. Let us show the converse: Suppose $\xi(\sV,Fil^{\bullet})$ is locally free. Then its fiber at $t=0$ is also locally free. Our observation is that there is always a natural isomorphism 
$$
Gr_{Fil^{\bullet}}\sV\cong \xi(\sV,Fil^{\bullet})|_{t=0}.
$$
Without loss of generality, we may assume $S=\Spec\ A$ (which is noetherian), and $\sV$ as well as $Fil^i\subset \sV$ for each $i$ is the sheafificiation of finitely generated modules $M^i\subset M$. Then $\xi(\sV,Fil^{\bullet})|_{t=0}$ corresponds to the quotient module $\tilde M/t\tilde M$, where
$$
\tilde M=\sum_i t^{-i}M^i\subset M[t,t^{-1}]=\oplus_i M\{t^i\}.
$$
Consider the morphism of $A/tA$-modules
$$
\oplus_i M^i/M^{i+1}\to \tilde M/t\tilde M,\quad (\bar m_i)\mapsto \overline{\sum_i t^{-i}m_i},
$$
where $m_i\in M_i$ is a representative of $\bar m_i\in M^i/M^{i+1}$. It is clearly surjective. Suppose $\overline{\sum_i t^{-i}m_i}=0$. Then we have an equality of elements in $M[t,t^{-1}]$
$$
\sum_i t^{-i}m_i=\sum_i t^{-i+1}n_{i},
$$
where $n_i\in M^i$. Thus $m_i=n_{i+1}\in M^{i+1}$ and hence $(\bar m_i)=0$. Simple induction on $i$ shows that $Fil^i$ is locally a direct summand of $Fil^{i-1}$. The lemma is proved. 
\end{proof} 
For an integrable connection $(\sV,\nabla)$, $Fil^{\bullet}$ on $\sV$ satisfies the Griffiths transversality if and only if $\nabla$ extends in $\G_m$-equivariant way a $t$-connection $\tilde \nabla$ over $\A^1_S/\A^1$ (see \cite[Lemma 7.1]{Si97}). Such a triple $(\sV,\nabla,Fil^{\bullet})$ may be called a de Rham bundle. 
\begin{proposition}\label{filtration structure and griffiths transversality}
For a de Rham bundle $(\sV,\nabla, Fil^{\bullet})$ over $S$, the Rees construction gives rise to a linear connection $\nabla_{\pi}$ on a smooth $S$-variety $\pi: V\to S$ together with a filtration structure $Fil_{\pi}$ on $\pi$ which obeys the Griffiths transversality.  
\end{proposition}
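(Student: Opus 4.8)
The plan is to feed the three ingredients of the de Rham bundle $(\sV,\nabla,Fil^\bullet)$ through the linear scheme construction of \S\ref{pseduo-smooth morphisms}, one at a time: the underlying locally free sheaf $\sV$ produces the smooth $S$-variety, the connection $\nabla$ produces $\nabla_\pi$ via Proposition \ref{flat/higgs bundles are examples of foliated/higgs varieties}, and the Rees sheaf $\xi(\sV,Fil^\bullet)$ produces the filtration structure, with Griffiths transversality transported by exactly the same mechanism applied over the base $\A^1$ with $\lambda=t$.

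First I would set $\pi\colon V=\mathbf{Spec}\,S(\sV)\to S$ to be the geometric vector bundle attached to $\sV$; since $\sV$ is locally free, $\pi$ is smooth. Proposition \ref{flat/higgs bundles are examples of foliated/higgs varieties} applied with $\lambda=1$ then furnishes a linear connection $\nabla_\pi$ on $\pi$, and, because $\nabla$ is integrable, the same proposition guarantees that $\nabla_\pi$ is integrable. Thus $(\pi,\nabla_\pi)=G(\sV,\nabla)$ is a nonlinear flat bundle over $S$ in the sense of Definition \ref{nonlinear flat bundle and Higgs bundle}.

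Next I would build the filtration structure. By Lemma \ref{taking grading}, the local splitness of $Fil^\bullet$ makes the Rees sheaf $\xi:=\xi(\sV,Fil^\bullet)$ a $\Gm$-equivariant locally free sheaf of finite rank on $\A^1_S$. Regarding $\A^1_S$ as smooth over $\A^1=\A^1_k$, I form its associated geometric vector bundle $\tilde f\colon \mathscr{X}=\mathbf{Spec}\,S(\xi)\to \A^1_S$, which is smooth and in particular flat. The $\Gm$-action on $\xi$ lifts functorially to a $\Gm$-action on $\mathscr{X}$ covering the standard action on $\A^1_S$ and trivial on the $S$-factor, and since $\xi|_{t=1}\cong \sV$ canonically, the fiber $\mathscr{X}_1$ is identified with $V$. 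This produces the Cartesian square required in Definition \ref{nonlinear fontaine module}(iii), so $\tilde f$ is the desired filtration structure $Fil_\pi$ on $\pi$. For Griffiths transversality I would invoke \cite[Lemma 7.1]{Si97}: since $(\sV,\nabla,Fil^\bullet)$ is a de Rham bundle, $\nabla$ extends to a $\Gm$-equivariant $t$-connection $\tilde\nabla$ on $\xi$ relative to $\A^1$. Feeding $\tilde\nabla$, now viewed as a $\lambda$-connection with $\lambda=t$ over the base $\A^1$, into Proposition \ref{flat/higgs bundles are examples of foliated/higgs varieties} yields a $t$-connection $\tilde\nabla_\pi\colon \tilde f^*T_{\A^1_S/\A^1}\to T_{\mathscr{X}/\A^1}$ on the linear scheme whose composite with the projection is $t\cdot\id$, and which is $\Gm$-equivariant because its construction is functorial for the $\Gm$-action. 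Restricting to $t=1$ and using the base-change compatibility of $\lambda$-connections recorded after Definition \ref{lambda connection} identifies $\tilde\nabla_\pi|_{t=1}$ with $\nabla_\pi$, so $\tilde\nabla_\pi$ is the required extension; its uniqueness is part of the transversality condition itself.

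The main obstacle, and really the only nonroutine point, is checking that the two uses of Proposition \ref{flat/higgs bundles are examples of foliated/higgs varieties}, over the bases $k$ and $\A^1$ respectively, are compatible with the $\Gm$-action and with specialization at $t=1$, so that $\tilde\nabla_\pi$ genuinely extends $\nabla_\pi$. One cannot appeal to the integrability half of that proposition here, since $\lambda=t$ is neither a unit nor zero on $\A^1$; but only the existence of $\tilde\nabla_\pi$ and the identity on its composite are needed for Definition \ref{nonlinear fontaine module}(a), and these are supplied by the first, unconditional, half of the proposition. The $\Gm$-equivariance and the $t=1$ identification both follow from the locality and functoriality of the linear scheme construction, so the verification reduces to unwinding the explicit local formulas of Proposition \ref{flat/higgs bundles are examples of foliated/higgs varieties} equivariantly, which I expect to be straightforward.
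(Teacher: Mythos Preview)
Your proposal is correct and follows essentially the same approach as the paper: both apply Proposition \ref{flat/higgs bundles are examples of foliated/higgs varieties} to the Rees sheaf $(\xi(\sV,Fil^\bullet),\tilde\nabla)$ over $\A^1_S/\A^1$ to obtain the $t$-connection on $\tilde V=\mathbf{Spec}\,S(\xi(\sV,Fil^\bullet))$, inherit the $\Gm$-action from the $\Gm$-structure on $\xi$, and then identify the fiber at $t=1$ with $(\pi,\nabla_\pi)$. Your caveat that only the unconditional first half of Proposition \ref{flat/higgs bundles are examples of foliated/higgs varieties} is needed (since $\lambda=t$ is neither a unit nor zero) is a point the paper leaves implicit.
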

\begin{proof}
This is a direct consequence of the previous discussions. Indeed, applying Proposition \ref{flat/higgs bundles are examples of foliated/higgs varieties} to $(\xi(\sV,Fil^{\bullet}),\tilde \nabla)$ yields a $t$-connection $\tilde \nabla_{\tilde \pi}$ on 
$$
\tilde \pi: \tilde V=\mathbf{Spec}S(\xi(\sV,Fil^{\bullet}))\to \A^1_S,
$$
together with a natural $\G_m$-action on $\tilde V$ coming from $\G_m$-structure on $\xi(\sV,Fil^{\bullet})$. Note also that the pullback of $\tilde \pi$ along $S\stackrel{t=1}{\to}\A^1_S$ is by definition equal to
$$
\mathbf{Spec}(S(\xi(\sV,Fil^{\bullet}))|_{t=1})=\mathbf{Spec}S(\xi(\sV,Fil^{\bullet})|_{t=1})=\mathbf{Spec}S(\sV)=V\stackrel{\pi}{\to} S.
$$
By going through the construction of linear connections in Proposition \ref{flat/higgs bundles are examples of foliated/higgs varieties}, one verifies furthermore that $\tilde \nabla_{\tilde \pi}$ pulls back to $\nabla_{\pi}$ at $t=1$. We omit the detail of verification. 
\end{proof}
It is also clear from the proof of Lemma \ref{taking grading} that the fiber of $\tilde \pi$ at $t=0$ coincides with the associated linear Higgs field to the graded Higgs sheaf $Gr_{Fil^{\cdot}}(\sV,\nabla)$. Because of this fact, we shall call $(f_0,\theta)$ in general, the fiber of $\tilde f$ at $t=0$ in Definition \ref{nonlinear fontaine module}, the associated graded Higgs $S$-variety to $(f,\nabla,Fil)$. \\

The Frobenius structure and the parallel condition come from Faltings' formulation on the strongly $p$-divisible property and the parallel condition satisfied by relative Frobenius on cohomology (\cite{F}). In the remaining paragraphs, our discussion shall center around this point. Let us first introduce the following 
\begin{definition}\label{strict p-torsion nonlinear Fontaine module}
A geometric strict $p$-torsion nonlinear Fontaine module over $S$ is a nonlinear Fontaine module over $S$ such that $\theta$ is $p$-nilpotent, viz. $[p]\circ \theta=0$.    
\end{definition}
Recall from \cite[Definition 4.6]{OV} that a Fontaine module over $S$ is a quadruple $(\sV,\nabla,Fil^{\bullet},\psi)$ where $(\sV,\nabla,Fil^{\bullet})$ is a de Rham bundle over $S$, whose Hodge filtration is of level $\leq p-1$, and $\psi$ is an isomorphism  
$$
\psi: C^{-1}q^*Gr_{Fil^{\bullet}}(\sV,\nabla)\to (\sV,\nabla),
$$
where $q: S'=S\times_{k,\sigma}k\to S$ is the natural projection. 
\begin{proposition}\label{classical Fontaine modules}
A Fontaine module a la Ogus-Vologodsky is naturally a geometric strict $p$-torsion nonlinear Fontaine module. 
\end{proposition}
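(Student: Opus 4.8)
The plan is to produce the nonlinear data by applying the linear-scheme and Rees constructions of the previous sections to each piece of the linear Fontaine module $(\sV,\nabla,Fil^{\bullet},\psi)$, and then to transport the two defining conditions across the functor $G$. Concretely, I would take $X$ to be the linear scheme $V=\mathbf{Spec}\,S(\sV)$ with $f=\pi\colon V\to S$ and $\nabla$ the linear connection $\nabla_{\pi}$ of Proposition \ref{flat/higgs bundles are examples of foliated/higgs varieties}, so that $(f,\nabla)$ is a nonlinear flat bundle with $f$ smooth. The whole argument rests on the compatibility, already recorded in \S\ref{pseduo-smooth morphisms}--\S\ref{sec:nonlinear_cartier_descent_theorem}, between $G$ and the classical and nonlinear (inverse) Cartier transforms (Theorem \ref{local nonlinear Hodge correspondence}(i) together with the commutative square relating $G$, $C$ and $C^{-1}$), which lets me read off each nonlinear structure from its linear counterpart.

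First I would build the filtration structure and verify (a). Applying the Rees construction to $(\sV,\nabla,Fil^{\bullet})$ and then Proposition \ref{filtration structure and griffiths transversality} produces a $\Gm$-equivariant flat morphism $\tilde f\colon\mathscr X=\mathbf{Spec}\,S(\xi(\sV,Fil^{\bullet}))\to \A^1_{S}$ with fiber $f$ at $t=1$ and a $t$-connection $\tilde\nabla$ extending $\nabla$. This is exactly the data of a filtration structure $Fil$ satisfying Griffiths transversality, so (a) holds by construction, and $\theta=\tilde\nabla|_{t=0}$ is the linear Higgs field attached to $Gr_{Fil^{\bullet}}(\sV,\nabla)$ as noted after Proposition \ref{filtration structure and griffiths transversality}; thus the fiber at $t=0$ is $(f_0,\theta)=G(Gr_{Fil^{\bullet}}(\sV,\nabla))$, the associated graded Higgs $S$-variety, with $\mathscr X_0=f_0$ the linear scheme of $Gr_{Fil^{\bullet}}\sV$ by Lemma \ref{taking grading}. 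Because the Hodge filtration has level $\le p-1$, the endomorphism $\theta_{\partial}$ shifts the grading by one and hence $\theta_{\partial}^{p}=0$; by the local computation of Lemma \ref{functor G} (the $p$-th power map on the vertical vector field $\theta_{\pi}(\partial)$ computes $(\theta_{\partial})^{p}$) this is precisely $[p]\circ\theta=0$, so $\theta$ is $p$-nilpotent and the candidate will be a geometric strict $p$-torsion object once $\varphi$ is produced.

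Next I would produce the Frobenius structure $\varphi$ from $\psi$. Since $q\circ F_{S/k}=F_{S}$, the OV-isomorphism $\psi\colon C^{-1}q^{*}Gr_{Fil^{\bullet}}(\sV,\nabla)\xrightarrow{\ \sim\ }(\sV,\nabla)$ presents $\sV$ as an $\sO_S$-module isomorphic to $F_{S}^{*}Gr_{Fil^{\bullet}}\sV$ carrying the deformed canonical connection. Applying $G$ and the compatibility square gives an isomorphism of nonlinear flat bundles $G(\psi)\colon C^{-1}\pi^{*}(f_0,\theta)\xrightarrow{\ \sim\ }(f,\nabla)$ with $\pi=q\colon S'\to S$. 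By the construction in Theorem \ref{equivalence in the case of vanishing curvature}, the underlying scheme of $C^{-1}\pi^{*}(f_0,\theta)$ is the Frobenius pullback $F_{S/k}^{*}Y$ of the linear scheme $Y$ of $q^{*}Gr_{Fil^{\bullet}}\sV$, i.e.\ $F_{S}^{*}(f_0)$. Reading $G(\psi)$ on underlying schemes therefore yields an $S$-isomorphism $\psi_{\mathrm{nl}}\colon X\xrightarrow{\ \sim\ }F_{S}^{*}\mathscr X_{0}=F_n^{*}\mathscr X_p$; composing with the projection $F_n^{*}\mathscr X_p\to\mathscr X_p$ defines the $F_n$-linear morphism $\varphi=\pi\circ\psi_{\mathrm{nl}}$, which is the required Frobenius structure and is an isomorphism onto $F_n^{*}\mathscr X_p$ precisely because $\psi$ is.

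Finally I would verify the parallel condition (b), which I expect to be the \emph{main obstacle}. The content is to show that the horizontality of the linear $\psi$ (it is a morphism in $\MIC(S)$, hence intertwines the connections on source and target) is equivalent to the commutativity of the nonlinear diagram in (b), in which $f_1^{*}\tfrac{dF_{n+1}}{[p]}$ reduces mod $p$ to the map $\zeta=\tfrac{d\tilde F}{[p]}$ of Step 1 in the proof of Theorem \ref{local nonlinear Hodge correspondence}. The plan is to differentiate $\psi_{\mathrm{nl}}$ and compare $d\varphi\circ\nabla$ with $\varphi^{*}\theta\circ f^{*}\zeta$ using the explicit deformation formula $\nabla=\nabla_{can}-\sum_{n\ge 0}[p]^{n}\circ(\pi^{*}\theta\circ f^{*}\zeta)\circ(f^{*}\zeta)^{n}$ defining $C^{-1}$; since everything in sight is linear along the vertical fibers, the identity reduces to the sheaf-level horizontality of $\psi$, which is given. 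The delicate points will be the bookkeeping of absolute-versus-relative Frobenius and the sign discrepancy between OV's $Gr_{Fil^{\bullet}}(\sV,\nabla)$ and the $-\theta$ appearing in the one-periodicity normalization (cf.\ Theorem \ref{global one-periodicity}); once these are aligned, condition (b) follows from the linear parallel (strong $p$-divisibility) condition built into the definition of a Fontaine module, completing the proof.
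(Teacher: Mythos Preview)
Your proposal is correct and follows essentially the same route as the paper: invoke Proposition \ref{filtration structure and griffiths transversality} for the filtration structure and Griffiths transversality, then use the Ogus--Vologodsky isomorphism $\psi$ to produce the bundle isomorphism $V\cong F_S^*E$ (with $E$ the linear scheme of $Gr_{Fil}\sV$) and set $\varphi$ to be its composite with the projection, so that horizontality of $\psi$ translates into condition (b). The paper is terser on (b) than you anticipate---it simply notes that the explicit local form $C^{-1}(\sE,\theta)=(F_S^*\sE,\nabla_{can}-\tfrac{d\tilde F_S}{[p]}\circ F_S^*\theta)$ (only the $n=0$ term of your deformation formula survives, by the $p$-nilpotency you established) makes the parallel diagram a restatement of ``$\psi$ preserves connections''; your more elaborate plan via $G$ and the compatibility square is equivalent but unnecessary once one works directly with this formula.
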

\begin{proof}
After Proposition \ref{filtration structure and griffiths transversality}, it remains to show a Frobenius structure on $(\pi: V\to S, \nabla_{\pi}, Fil_{\pi})$ which satisfies the paraellel condition. Recall that, over a local $S$, the inverse Cartier transform of Ogus-Vologodsky for nilpotent Higgs bundles $(\sE,\theta)$ of exponent $\leq p-1$ is simply given by
$$
(F_S^*\sE,\nabla_{can}-\frac{d\tilde F_S}{[p]}\circ F^*\theta)\footnote{This has been verified in \cite{LSZ15}. Unfortunately, there is a mistake about sign: The functor $C^{-1}_{exp}$ does not differ from $C^{-1}$ by a sign as stated in \cite[\S3]{LSZ15}. They actually simply coincide. This is because in the local construction of $C^{-1}_{exp}$ (\cite[\S2.2]{LSZ15}), the connection should read $\nabla_{can}-\frac{d\tilde F_S}{[p]}\circ F_S^*\theta$, instead of $\nabla_{can}+\frac{d\tilde F_S}{[p]}\circ F_S^*\theta$.}. 
$$
Then the inverse of the sheaf isomorphism $\psi$ gives rise to a bundle isomorphism $V\cong F_S^*E$, where $E$ is the geometric vector bundle associated to $\Gr_{Fil}\sV$. Let $\varphi: V\to E$ be the composite of the natural projection $F_S^*E\to E$ with the previous isomorphism. Then the condition that $\psi$ preserves the connection translates exactly into the parallel condition as required for $\varphi$.
\end{proof}
In \cite{Sh}, we shall study another example of geometric strict $p$-torsion nonlinear Fontaine modules which comes from nonabelian Hodge moduli spaces. In connection to the local nonlinear Hodge correspondence, the significance of the above notion lies in the following fact.
\begin{proposition}\label{nonlinear Fontaine moduli implies one-periodicity}
Let $(f,\nabla,Fil,\varphi)$ be a geometric strict $p$-torsion nonlinear Fontaine module over $S$. Then the Frobenius structure on $f$ induces an isomorphism of transversal foliated $S$-varieties: 
$$
C^{-1}q^*(f_0,-\theta)\cong (f,\nabla). 
$$
\end{proposition}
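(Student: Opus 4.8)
The plan is to make both sides of the claimed isomorphism completely explicit and then match them, using the Frobenius structure for the underlying $S$-scheme and the parallelism condition for the connection. First I would record the simplifications coming from $n=0$: over $S=S_0$ one has $p\equiv 0$, so $\mathscr{X}_p=\mathscr{X}_0=f_0$ and the reduction of the Frobenius lifting is the absolute Frobenius $F_S=q\circ F_{S/k}$. Since the module is \emph{strict $p$-torsion}, the associated Higgs field satisfies $[p]\circ\theta=0$, hence $q^*(f_0,-\theta)\in\NHIG_{nil}(S')$ is $p$-nilpotent and the defining formula \eqref{connection formula} for $C^{-1}$ collapses to its single $n=0$ term. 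Thus, with $\theta'=q^*\theta$,
$$
C^{-1}q^*(f_0,-\theta)=\big(f_C,\ \nabla_{can}+\pi^*\theta'\circ f_C^*\zeta\big),
$$
whose underlying $S$-variety is $X_C=F_{S/k}^*\big(\mathscr{X}_0\times_{S,q}S'\big)$. Using $q\circ F_{S/k}=F_S$ I would rewrite this as $X_C\cong F_S^*\mathscr{X}_0$. On the other side, the Frobenius structure provides an $S$-isomorphism $\psi\colon X\xrightarrow{\sim}F_S^*\mathscr{X}_0=X_C$, so both objects live over the same $S$-scheme and the statement reduces to comparing connections under $\psi$.

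Next I would isolate the single geometric input needed about the projection $p_0\colon X_C=F_S^*\mathscr{X}_0\to\mathscr{X}_0$. Because $p_0$ covers the absolute Frobenius $F_S$ of the base and $dF_S=0$, the differential $dp_0$ carries $T_{X_C/k}$ into $p_0^*T_{\mathscr{X}_0/S}$; it kills the canonical foliation $\sF_{can}=\mathrm{im}(\nabla_{can})=\Ann(\sO_{\mathscr{X}_0})$ and restricts to an isomorphism $T_{X_C/S}\xrightarrow{\sim}p_0^*T_{\mathscr{X}_0/S}$ coming from base change of relative differentials. In particular $\ker(dp_0)=\sF_{can}$, and under the base-change identifications the correction term satisfies $dp_0\circ\pi^*\theta'=p_0^*\theta$. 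Consequently $dp_0\circ\nabla_{can}=0$ and
$$
dp_0\circ\nabla_C=dp_0\circ\big(\pi^*\theta'\circ f_C^*\zeta\big)=p_0^*\theta\circ f_C^*\zeta,
$$
where $\nabla_C$ denotes the connection produced by $C^{-1}$ above.

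The crux is to feed the parallelism condition (b) into this picture. Writing $\varphi=p_0\circ\psi$, condition (b) reads $d\varphi\circ\nabla=\varphi^*\theta\circ f^*\zeta$; transporting along the isomorphism $\psi$ and setting $\tilde\nabla=\psi_*\nabla$ this becomes
$$
dp_0\circ\tilde\nabla=p_0^*\theta\circ f_C^*\zeta,
$$
which is \emph{exactly} the identity just obtained for $\nabla_C$ — and here the sign $-\theta$ in the statement is precisely what makes the two right-hand sides agree. Hence $dp_0\circ(\tilde\nabla-\nabla_C)=0$, so the difference $\tilde\nabla-\nabla_C$, being a difference of two splittings of the tangent sequence, lands in $T_{X_C/S}\cap\ker(dp_0)=T_{X_C/S}\cap\sF_{can}=0$ by transversality. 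Therefore $\psi_*\nabla=\nabla_C$, i.e. $\psi$ is an isomorphism $(f,\nabla)\xrightarrow{\sim}C^{-1}q^*(f_0,-\theta)$ in $\NMIC_{nil}(S)$, after checking (routinely) that it also respects the $S$-variety structure.

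I expect the main obstacle to be the bookkeeping around the two Frobenii: one must keep straight that the Frobenius structure $\psi$ realizes $X$ as the \emph{absolute} Frobenius pullback $F_S^*\mathscr{X}_0$, which is exactly the $S$-scheme output by $C^{-1}$ once $q\circ F_{S/k}=F_S$ is used, and that $dp_0\circ\pi^*\theta'$ is genuinely $p_0^*\theta$ rather than a Frobenius-twisted variant. Granting those identifications, the only substantively new step — upgrading the equality $dp_0\circ\tilde\nabla=dp_0\circ\nabla_C$ to the equality $\tilde\nabla=\nabla_C$ of connections — is disposed of cleanly by transversality, since $\ker(dp_0)$ meets the relative tangent sheaf only in zero.
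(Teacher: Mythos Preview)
Your proposal is correct and follows essentially the same strategy as the paper's proof: identify $X$ with $F_S^*\mathscr{X}_0$ via $\psi$, write out $C^{-1}q^*(f_0,-\theta)$ explicitly using the $p$-nilpotency of $\theta$, and then match the connections through the parallelism condition. Your argument is in fact slightly more explicit at the final step: where the paper simply asserts that condition~(b) forces $\nabla$ to agree with $\nabla_{can}+\pi^*\theta\circ f_1^*\zeta$, you make this precise by observing that $\ker(dp_0)=\sF_{can}$ and that the difference $\tilde\nabla-\nabla_C$, being valued in $T_{X_C/S}\cap\ker(dp_0)$, must vanish by transversality.
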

\begin{proof}
In a char $p$ ring, $p=0$. Hence $\psi$ is an $S$-isomorphism $X \to X_0^{(p)}:=F_S^*X_0$. As $\theta$ is $p$-nilpotent, $C^{-1}(f_0,-\theta)$ is given by
$$
(f_0^{(p)}, \nabla_{can}+\pi^*\theta\circ f_0^{(p)}\zeta),
$$
where $f_0^{(p)}: X_0^{(p)}\to S$ is the base change of $f_0$ via $F_S$. It suffices to show, under the isomorphism $\psi$, $\nabla$ is identified with $\nabla_{can}+\pi^*\theta\circ f_0^{(p)}\zeta$. But this is exactly what the parallel condition for $\varphi$ says: Through $\psi$, $X_1$ is identified with the Cartesian product $X_0\times_{S,F_S}S$. In this way, $\varphi$ is identified with the natural projection $\pi$, $f_0^{(p)}$ is identified with $f_1$. Note that $\nabla$ yields a splitting 
$$
T_{X_1}\cong f_1^*T_{S}\oplus T_{X_1/S}\cong f_1^*T_{S/k}\oplus \pi^*T_{X_0/S}.
$$
over which $d\pi$ acts as $0\oplus id$, that is  
$$
(a,b)\mapsto (0,b),\quad a\in f_1^*T_{S/k}, \ b\in \pi^*T_{X_0/S}.
$$
This is because for the component $a$, $d\pi$ acts as $dF_S$ which is zero. Now the commutative diagram in condition (b) of Definition \ref{nonlinear fontaine module} says that $\nabla$ acts as $\nabla_{can}+\pi^*\theta\circ f_1^*\zeta$ as wanted.
\end{proof}
\begin{remark}
It is an interesting question to ask whether one obtains a generalization of \cite[Theorem 2.6]{F} for geometric nonlinear Fontaine modules.     
\end{remark}

To conclude this section, we want to globalize this notion, using the construction in \S\ref{Global nonlinear Hodge correspondence}. Note that the essence of that construction is via the truncated exponential series, one obtains a \emph{canonical} isomorphism of functors between two local (inverse) Cartier transforms. Let $G$ be a split reductive group over $k$ with $h(G)<p$. 
\begin{definition}\label{global geometric nonlinear fontaine module}
Assume that $S$ is $W_2(k)$-liftable and fix such a lifting $\tilde S$. A geometric strict $p$-torsion nonlinear Fontaine module with symmetric group $G$ over $S$ (with respect to $\tilde S$) is a quadruple $(f,\nabla,Fil,\varphi)$, where $(f,\nabla)$ is an object in the category $\NMIC_{nil}^G(S)$, $Fil$ is a filtration structure on $f_1$ as defined in Definition \ref{nonlinear fontaine module} (iii) which satisfies Griffiths transversality as given in Definition \ref{nonlinear fontaine module} (a), $\varphi$ is a Frobenius structure on $(f,Fil)$ which is given as follows: For any choice $\tilde F_{\alpha}$ of local Frobenius liftings over an open subscheme $\tilde S_{\alpha}\subset \tilde S$, a Frobenius structure $\varphi_{(\tilde S_{\alpha},\tilde F_{\alpha})}$ is put on $(f,\nabla,Fil)|_{S_{\alpha}}$, called the evaluation of $\varphi$ over $(\tilde S_{\alpha},\tilde F_{\alpha})$, such that the following holds
\begin{itemize}
    \item [(i)] The quadruple $(f|_{S_{\alpha}},\nabla|_{S_{\alpha}},Fil|_{S_{\alpha}},\varphi_{(\tilde S_{\alpha},\tilde F_{\alpha})})$ is a geometric strict $p$-torsion nonlinear Fontaine module over $S_{\alpha}$ with respect to the Frobenius lifting $\tilde F_{S_{\alpha}}$.  
    \item [(ii)] Any two evaluations are related through the Taylor formula: Over $S_{\alpha\beta}$, it holds that
    $$
   \psi_{(\tilde S_{\alpha},\tilde F_{\alpha})}=\exp(\pi^*\theta_{\alpha\beta}\circ f_0^{(p)*}h_{\alpha\beta})\circ \psi_{(\tilde S_{\beta},\tilde F_{\beta})}.
    $$ 
\end{itemize}
\end{definition}
It is now a tautology to extend Proposition \ref{nonlinear Fontaine moduli implies one-periodicity} to a global $S$.
\begin{theorem}\label{global one-periodicity}
Assume $S$ is $W_2(k)$-liftable. Then for any geometric strict $p$-torsion nonlinear Fontaine module $(f,\nabla,Fil,\varphi)$ with symmetric group $G$ over $S$ with respect to some $W_2(k)$-lifting of $S$, it holds that
$$
C^{-1}q^*(f_0,-\theta)\cong (f,\nabla),
$$
where $q: S'\to S$ is the natural projection.
\end{theorem}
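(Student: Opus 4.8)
The plan is to descend the local periodicity statement Proposition \ref{nonlinear Fontaine moduli implies one-periodicity} along the covering used to construct the global functor $C^{-1}$, taking advantage of the fact that condition (ii) of Definition \ref{global geometric nonlinear fontaine module} was tailored to be \emph{exactly} the gluing cocycle of the exponential twist of \S\ref{Global nonlinear Hodge correspondence}. Concretely, I would first fix a finite open covering $S=\bigcup_\alpha S_\alpha$ serving both constructions at once: each $(S_\alpha,F_{S_\alpha/k})$ is $W_2(k)$-liftable with a chosen Frobenius lift $\tilde F_\alpha$ (hence a $\zeta_\alpha$), each $S_{\alpha\beta}$ is affine carrying a Deligne--Illusie class $h_{\alpha\beta}$ with $dh_{\alpha\beta}=\zeta_\alpha-\zeta_\beta$ and the cocycle relation, and the relevant automorphism bundles trivialize. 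Over each $S_\alpha$ the datum $(f|_{S_\alpha},\nabla|_{S_\alpha},Fil|_{S_\alpha},\varphi_{(\tilde S_\alpha,\tilde F_\alpha)})$ is, by hypothesis, a geometric strict $p$-torsion nonlinear Fontaine module relative to $\tilde F_\alpha$, so Proposition \ref{nonlinear Fontaine moduli implies one-periodicity} supplies an isomorphism of transversal foliated $S_\alpha$-varieties identifying $(f|_{S_\alpha},\nabla|_{S_\alpha})$ with $C^{-1}_\alpha q^*(f_0|_{S_\alpha},-\theta|_{S_\alpha})$; concretely this is the $S_\alpha$-isomorphism $\psi_\alpha:=\psi_{(\tilde S_\alpha,\tilde F_\alpha)}\colon X|_{S_\alpha}\to X_0^{(p)}|_{S_\alpha}$ underlying the local Frobenius structure, with $C^{-1}_\alpha$ the local inverse Cartier transform attached to $\tilde F_\alpha$.

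Next I would recall from \S\ref{Global nonlinear Hodge correspondence} that the global object $C^{-1}q^*(f_0,-\theta)$ is \emph{by construction} obtained by gluing the local pieces $C^{-1}_\alpha q^*(f_0|_{S_\alpha},-\theta|_{S_\alpha})$ along the transition automorphisms produced by exponential twisting. Since the input Higgs field here is $-\theta$, the twisting cocycle of \S\ref{Global nonlinear Hodge correspondence} reads
$$
g_{\alpha\beta}=\exp\bigl(-\pi^*(-\theta)_{\alpha\beta}\circ g'^{*}h_{\alpha\beta}\bigr)=\exp\bigl(\pi^*\theta_{\alpha\beta}\circ f_0^{(p)*}h_{\alpha\beta}\bigr),
$$
the sign flip coming precisely from the $-\theta$; and Lemma \ref{gluing lemma} guarantees both the cocycle identity for the $g_{\alpha\beta}$ and that they intertwine the local connections. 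The target $(f,\nabla)$ is already a global object, so gluing $\{\psi_\alpha\}$ into a single isomorphism $X\to C^{-1}q^*(f_0,-\theta)$ amounts to verifying, on each $S_{\alpha\beta}$, the one compatibility
$$
\psi_\alpha=g_{\alpha\beta}\circ\psi_\beta.
$$
This is verbatim condition (ii) of Definition \ref{global geometric nonlinear fontaine module}, once $g_{\alpha\beta}$ is matched with $\exp(\pi^*\theta_{\alpha\beta}\circ f_0^{(p)*}h_{\alpha\beta})$. Hence the $\psi_\alpha$ descend to a global isomorphism, which intertwines the connections because each local $\psi_\alpha$ does and both sides glue connections compatibly by Lemma \ref{gluing lemma} (i).

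The only genuine content, and the place demanding care, is the bookkeeping in this final matching rather than any new geometric input---which is why the result is advertised as a tautology. I would track the sign change $\theta\mapsto-\theta$ through the exponent of the twist, confirm that the pullback $g'^{*}$ appearing in the \S\ref{Global nonlinear Hodge correspondence} cocycle is the same as the $f_0^{(p)*}$ of Definition \ref{global geometric nonlinear fontaine module}, and check that the direction of each $\psi_\alpha$ (from the local Frobenius structure, $X\to X_0^{(p)}$) is the one compatible with the gluing convention for $C^{-1}$. Once these three identifications are pinned down the transition data coincide literally, and the global statement follows formally from the local Proposition together with the design of the global Fontaine module; no further estimate or computation is needed.
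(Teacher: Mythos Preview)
Your proposal is correct and follows essentially the same approach as the paper: apply Proposition \ref{nonlinear Fontaine moduli implies one-periodicity} locally to obtain isomorphisms $\psi_\alpha$, then observe that the Taylor formula in Definition \ref{global geometric nonlinear fontaine module}(ii) is precisely the gluing compatibility needed to assemble them into a global isomorphism with $C^{-1}q^*(f_0,-\theta)$. The paper compresses all of this into two sentences, calling it a tautology; your elaboration of the sign bookkeeping and the identification of $g'^*$ with $f_0^{(p)*}$ is a faithful unpacking of that tautology rather than a different argument.
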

\begin{proof}
Because of the Taylor formula in Definition \ref{global geometric nonlinear fontaine module} (ii),    
the set of local isomorphisms 
$$
C^{-1}q^*(f_0,-\theta)|_{S_{\alpha}}\cong (f,\nabla)|_{S_{\alpha}},
$$
induced by $\{\psi_{(\tilde S_{\alpha},\tilde F_{\alpha})}\}$s, glue altogether into a global isomorphism
$$
C^{-1}q^*(f_0,-\theta)\cong (f,\nabla).
$$
\end{proof}

\section{Appendix: Nonlinear Cartier descent and Ekedahl's correspondence, by Mao Sheng and Jinxing Xu}\label{Appendix}
In this appendix, we shall extend the nonlinear Cartier descent theorem to arbitrary transversally foliated $S$-varieties. Moreover, we shall give a new proof of the Ekedahl's correspondence using the nonlinear Cartier descent. 

\subsection{Nonlinear Cartier descent}\label{extension of nonlinear Cartier descent}
Assume that $k$ is a ring of characteristic $p$, and let $S$ be a smooth $k$-scheme. Set $S' \coloneqq S \times_{k,\sigma} k$, the base change of $S$ along the Frobenius homomorphism $\sigma \colon k \to k$, and let $F = F_{S/k} \colon S \to S'$ denote the relative Frobenius morphism.\\

Let $f' \colon X' \to S'$ be a $k$-morphism, and consider the base change 
\[f \colon X \coloneqq X' \times_{S'} S \to S.\]
 The Cartesian diagram induces a natural isomorphism
\[
   \Omega_{X/S'} \;\stackrel{\sim}{\longrightarrow}\; f^* \Omega_{S/S'} \;\oplus\; \pi^* \Omega_{X'/S'},
\]
where $\pi \colon X \to X'$ is the projection. Since both $S \to S'$ and $X \to S'$ induce zero homomorphisms on differentials, we have natural identifications $\Omega_{S/S'} = \Omega_{S/k}$ and $\Omega_{X/S'} = \Omega_{X/k}$.
It follows that there is an isomorphism
\begin{align}\label{eq:isomorphism of diffs}
    \Omega_{X/k} \;\stackrel{\sim}{\longrightarrow}\; f^* \Omega_{S/k} \;\oplus\; \pi^* \Omega_{X'/S'},
\end{align}
where $\pi \colon X \to X'$ is the projection. Since $\Omega_{S/k}$ is locally free, we have
\[
   \mathcal{H}om_{\mathcal{O}_X}(f^*\Omega_{S/k}, \mathcal{O}_X) \;\simeq\; f^* \mathcal{H}om_{\mathcal{O}_S}(\Omega_{S/k}, \mathcal{O}_S),
\]
and because the Frobenius morphism $S \to S'$ is flat,
\[
   \mathcal{H}om_{\mathcal{O}_X}(\pi^* \Omega_{X'/S'}, \mathcal{O}_X) \;\simeq\; \pi^* \mathcal{H}om_{\mathcal{O}_{X'}}(\Omega_{X'/S'}, \mathcal{O}_{X'}).
\]
Applying $\mathcal{H}om_{\mathcal{O}_X}(-,\mathcal{O}_X)$ to \eqref{eq:isomorphism of diffs} then yields
\begin{align}\label{eq:appendix splitting of tangent bundles}
       T_{X/k} \;\stackrel{\sim}{\longrightarrow}\; f^* T_{S/k} \;\oplus\; \pi^* T_{X'/S'}.
\end{align}
This splitting defines an $\mathcal{O}_X$-linear morphism
\[
   \nabla_{can} \colon f^* T_{S/k} \longrightarrow T_{X/k}.
\]
A straightforward verification shows that $\nabla_{can}$ is a transversal foliation on $f$ with vanishing $p$-curvature. Moreover, it coincides with the one defined in Lemma \ref{factorization lemma} when $k$ is a perfect field and $f'$ is smooth. Since $\pi \colon X \to X'$ is faithfully flat and induces a homeomorphism on the underlying topological spaces, we may identify $\mathcal{O}_{X'}$ with a subsheaf of $\mathcal{O}_X$. It then follows from the splitting \eqref{eq:isomorphism of diffs} that
\begin{align}\label{eq:appendix horizontal sections of canonical connection}
  \mathcal{O}_{X'} \;=\; \mathcal{O}_X^{\nabla_{can}} \;\coloneqq\; \{\, s \in \mathcal{O}_X \mid \nabla_{can}(s) = 0 \,\}.
\end{align}

Conversely, we have the following version of the nonlinear Cartier descent theorem, which removes the smoothness assumption on $X/S$ in Theorem \ref{equivalence in the case of vanishing curvature}.

\begin{theorem}\label{thm:nonlinear Cartier descent in appendix}
Keep $k$, $S$, and $S'$ as above. Let $f \colon X \to S$ be a quasi-compact $k$-morphism. If $\nabla \colon f^* T_{S/k} \to T_{X/k}$ is a transversal foliation with vanishing $p$-curvature, then there exists an $S'$-scheme $X'$ and an $S$-isomorphism 
\[
   X \;\stackrel{\sim}{\longrightarrow}\; X' \times_{S'} S
\]
such that $\nabla$ corresponds to $\nabla_{can}$ under this isomorphism. Moreover, the assignment $(X,\nabla) \mapsto X'$ defines an equivalence of categories between $\NMIC_0(S)$ and $\NHIG_0(S')$.
\end{theorem}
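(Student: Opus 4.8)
The plan is to realize the whole statement as a scheme-theoretic (``nonlinear'') upgrade of the classical Cartier descent of Katz \cite[Theorem 5.1]{K}, applied to the structure sheaf $\mathcal{O}_X$ viewed as a quasi-coherent sheaf of $\mathcal{O}_S$-algebras with integrable connection. Since the assertion is local on $S$ and on $X$, I would first reduce (using quasi-compactness to pass to a cover by finitely many affines) to the case $S=\Spec(B)$ with local coordinates $s_1,\dots,s_m$ and an affine open $V=\Spec(R)\subset X$ with $f(V)\subset\Spec(B)$. Writing $D_i:=\nabla(\partial/\partial s_i)$, transversality says the $D_i$ are $k$-derivations of $R$ with $D_i(f^\# s_j)=\delta_{ij}$, giving the local splitting $T_{X/k}=T_{X/S}\oplus\mathrm{im}(\nabla)$. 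Integrability then forces $[D_i,D_j]=0$, since the bracket lands in $T_{X/S}\cap\mathrm{im}(\nabla)=0$; and vanishing of the $p$-curvature forces $D_i^p=0$, because $D_i^p$ kills every $f^\# s_j$ and hence lies in $T_{X/S}$, while $p$-closedness of the foliation puts it in $\mathrm{im}(\nabla)$, and the two intersect trivially. Thus $(R,\nabla)$ is precisely a $B$-module with integrable connection of vanishing $p$-curvature whose connection operators are \emph{ring} derivations.

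Next I would apply the module form of Cartier descent to $R$. This yields the $B'$-module $R^\nabla=\bigcap_i\ker(D_i)$ and a canonical isomorphism of $B$-modules with connection $B\otimes_{B'}R^\nabla\xrightarrow{\ \sim\ }R$, the left side carrying $\nabla_{can}$. Because each $D_i$ is a derivation, $R^\nabla$ is a $B'$-subalgebra and the descent isomorphism is one of $B$-algebras; equivalently $\mathcal{O}_X=F^*\mathcal{O}_{X'}$ as algebras, where $\mathcal{O}_{X'}:=\mathcal{O}_X^\nabla$. Setting $V':=\Spec(R^\nabla)$, the map $V\to V'$ is the base change of $\Spec(B)\to\Spec(B')$ along $F$, in particular a finite universal homeomorphism, so $|V|=|V'|$ and $\nabla$ is identified with $\nabla_{can}$.

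To globalize I would verify that the sheaf $\mathcal{O}_X^\nabla$ on $|X|$ underlies a scheme $X'$ over $S'$ with $X\cong X'\times_{S'}S$. The crux is that for a horizontal element $a\in R^\nabla$ one has $(R_a)^\nabla=(R^\nabla)_a$, so the affine models $(|V|,\mathcal{O}_X^\nabla|_{|V|})\cong\Spec(R^\nabla)$ are compatible under restriction to distinguished opens; since $X\to X'$ is a universal homeomorphism, affine opens of the $V'$ pull back to affine opens of $X$, and the local identifications glue into a global $S'$-scheme $X'$ with $\mathcal{O}_{X'}=\mathcal{O}_X^\nabla$ and a canonical $S$-isomorphism $X\cong X'\times_{S'}S$ carrying $\nabla$ to $\nabla_{can}$. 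This gives the first assertion.

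Finally, for the equivalence I would set $C\colon(X,\nabla)\mapsto X'$ as above and $C^{-1}\colon X'\mapsto(X'\times_{S'}S,\nabla_{can})$. Then $C^{-1}\circ C=\mathrm{id}$ by the isomorphism just constructed, while $C\circ C^{-1}=\mathrm{id}$ because $\mathcal{O}_{X'\times_{S'}S}^{\nabla_{can}}=\mathcal{O}_{X'}$ by \eqref{eq:appendix horizontal sections of canonical connection}. On morphisms, an $S$-morphism $\phi\colon X_1\to X_2$ is horizontal (i.e.\ $\phi^*\nabla_2=\phi_*\circ\nabla_1$) exactly when $\phi^\#$ carries $\mathcal{O}_{X_2}^{\nabla_2}$ into $\phi_*\mathcal{O}_{X_1}^{\nabla_1}$, so $\phi$ descends to an $S'$-morphism $X_1'\to X_2'$, and conversely every $S'$-morphism base-changes to a horizontal $S$-morphism; these are mutually inverse, just as in the proof of Theorem \ref{equivalence in the case of vanishing curvature}. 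I expect the main obstacle to be this globalization step: confirming that $\mathcal{O}_X^\nabla$ really defines a scheme when $f$ is not affine, which is precisely why one leans on the universal homeomorphism $\pi\colon X\to X'$ to transport the affine-local descent across the identification $|X|=|X'|$.
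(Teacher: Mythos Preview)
Your proposal is correct and shares the paper's core idea: regard $\mathcal{O}_X$ as a module with integrable connection of vanishing $p$-curvature (the transversal foliation supplying the Leibniz rule, integrability and $p$-closedness forcing $[D_i,D_j]=0$ and $D_i^p=0$), then invoke the classical Cartier descent of \cite[Theorem~5.1]{K}. The implementation differs. You localize on both $S$ and $X$, apply module-level descent to each affine piece $R$ over $B$, observe that $R^{\nabla}$ is a $B'$-subalgebra and the descent isomorphism is one of algebras, and then glue the local $\Spec(R^{\nabla})$ along the universal homeomorphism $|X|=|X'|$; you correctly flag this gluing as the delicate point. The paper instead packages everything globally: it pushes the connection forward to obtain $\nabla_2$ on the quasi-coherent $\sO_S$-algebra $\sE=f_*\sO_X$ (this is where quasi-compactness of $f$ is actually used), applies Cartier descent once to $(\sE,\nabla_2)$ to get an $\sO_S$-algebra isomorphism $F^*(\sE^{\nabla_2})\cong\sE$, and sets $X':=\Spec_{S'}\sE^{\nabla_2}$. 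The paper's route is slicker---no gluing, and the passage from modules to schemes is a single relative $\Spec$---but as literally written it identifies $X$ with $\Spec_S(f_*\sO_X)$, which presupposes $f$ affine; your affine-local-plus-gluing argument is more explicit about how the scheme $X'$ arises when $f$ is merely quasi-compact. Either way, the substance is the same: the nonlinear descent is the linear descent applied to the structure sheaf with its algebra structure carried along.
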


\begin{proof}
By \eqref{eq:appendix horizontal sections of canonical connection}, it suffices to show the existence of $X'$. Since $T_{S/k}$ is locally free and $T_{X/k} =  \operatorname{Der}_k(\mathcal{O}_X) \subset \mathcal{H}om_k(\mathcal{O}_X, \mathcal{O}_X)$, we may rewrite $\nabla$ as
\[
   \nabla_1 \colon \mathcal{O}_X \to f^* \Omega_{S/k},
\]
and, using the projection formula, further as a $k$-linear homomorphism
\[
   \nabla_2 \colon f_* \mathcal{O}_X \to f_* \mathcal{O}_X \otimes_{\mathcal{O}_S} \Omega_{S/k}.
\]
The assumption that $\nabla$ is a transversal foliation implies that $\nabla_2$ defines a $k$-linear connection on the $\mathcal{O}_S$-module sheaf $f_* \mathcal{O}_X$. Moreover, since $\nabla$ has vanishing $p$-curvature, so does $\nabla_2$. As $f$ is quasi-compact, the $\mathcal{O}_S$-module $\mathcal{E} \coloneqq f_* \mathcal{O}_X$ is quasi-coherent.  Applying the classical Cartier descent to $(\mathcal{E}, \nabla_2)$ yields an isomorphism of $\mathcal{O}_S$-modules
\[
   \varphi \colon F^*(\mathcal{E}^{\nabla_2}) \;\stackrel{\sim}{\longrightarrow}\; \mathcal{E}.
\]
It is straightforward to see that $\mathcal{E}^{\nabla_2} = f_* (\mathcal{O}_X^{\nabla})$, which implies that $\mathcal{E}^{\nabla_2}$ is a quasi-coherent $\mathcal{O}_{S'}$-subalgebra of $\mathcal{E}$, and that $\varphi$ is an isomorphism of $\mathcal{O}_S$-algebras. Defining the $S'$-scheme $X'$ by $X' \coloneqq \Spec ~ \mathcal{E}^{\nabla_2}$, the isomorphism $\varphi$ induces the desired $S$-isomorphism
\[
   X \;\stackrel{\sim}{\longrightarrow}\; X' \times_{S'} S.
\]
Finally, classical Cartier descent again implies that $\nabla$ corresponds to $\nabla_{can}$ under this isomorphism.
\end{proof}
\begin{remark}\label{remark on flatness in nonlinear Cartier descent}
Let $f': X'\to S'$ be the structural morphism. Since the morphism $F: S\to S'$ is faithfully flat, it follows from \cite[Proposition 17.7.4]{EGA IV} (resp. \cite[Corollaire 17.7.3 (ii)]{EGA IV} that in the nonlinear Cartier descent, $f$ is flat (resp. smooth) if and only if $f'$ is flat (resp. smooth). 
\end{remark}
Notice that the above proof does not use Ekedahl's correspondence Theorem \ref{Ekedahl's thm}. Hence it gives a new proof of Theorem \ref{equivalence in the case of vanishing curvature}, that works now for a more general base ring $k$ than a perfect field. 

\subsection{Ekedahl's correspondence via nonlinear Cartier descent}
Assume that $k$ is a ring of characteristic $p$, and let $X$ be a smooth $k$-scheme. Set $X' \coloneqq X \times_{k,\sigma} k$ for the base change along the Frobenius homomorphism $\sigma \colon k \to k$, and let $F = F_{X/k} \colon X \to X'$ be the relative Frobenius morphism. Recall a smooth foliation $\mathcal{F} \subset T_{X/k}$ is a coherent subsheaf such that $T_{X/k}/\mathcal{F}$ is locally free and $\mathcal{F}$ is closed under the Lie bracket. The foliation $\mathcal{F}$ is called $p$-closed if the composite 
\[
   \psi_{\mathcal{F}} \colon \mathcal{F} \to T_{X/k} \xrightarrow{[p]} T_{X/k} \to T_{X/k}/\mathcal{F}
\]
vanishes.

\begin{lemma}\label{lemma:appendix local representation of foliation}
   Let $\mathcal{F}$ be a coherent subsheaf of $T_{X/k}$ such that the quotient $T_{X/k}/\mathcal{F}$ is locally free. Then for any point $x \in X$, there exists an open neighborhood $U \subseteq X$ of $x$, a smooth $k$-scheme $S$, and a $k$-morphism $f \colon U \to S$ such that $f$ induces an isomorphism
   \[
      \mathcal{F}|_U \;\simeq \; f^*T_{S/k}.
   \]
\end{lemma}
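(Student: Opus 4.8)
The plan is to reduce the statement to linear algebra over $\mathcal{O}_X$ by passing to the dual (cotangent) picture, and then to exploit the smoothness of $X/k$ to manufacture $f$ out of a carefully chosen subset of a system of local coordinates. First I would note that $\mathcal{F}$ is itself a subbundle: since $T_{X/k}$ is locally free ($X/k$ being smooth) and $T_{X/k}/\mathcal{F}$ is locally free by hypothesis, the sequence $0 \to \mathcal{F} \to T_{X/k} \to T_{X/k}/\mathcal{F} \to 0$ splits locally, so $\mathcal{F}$ is locally free; write $r = \rk\mathcal{F}$ and let $n$ be the rank of $\Omega_{X/k}$ near $x$. Dualizing yields a locally split short exact sequence $0 \to \mathcal{F}^{\perp} \to \Omega_{X/k} \to \mathcal{F}^{\vee} \to 0$, where $\mathcal{F}^{\perp} = (T_{X/k}/\mathcal{F})^{\vee}$ is the annihilator of $\mathcal{F}$ (locally free of rank $n-r$) and the quotient $\mathcal{F}^{\vee}$ is locally free of rank $r$; let $q\colon \Omega_{X/k}\twoheadrightarrow \mathcal{F}^{\vee}$ be the projection.

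Next I would shrink to an affine neighborhood $U$ of $x$ admitting a system of local coordinates $x_1,\dots,x_n\in\mathcal{O}_U$ with $dx_1,\dots,dx_n$ an $\mathcal{O}_U$-basis of $\Omega_{X/k}|_U$; such coordinates exist by smoothness of $X/k$. The images $q(dx_1),\dots,q(dx_n)$ generate $\mathcal{F}^{\vee}$, hence span the $r$-dimensional fibre $\mathcal{F}^{\vee}\otimes\kappa(x)$, and I select indices $i_1<\dots<i_r$ for which $q(dx_{i_1}),\dots,q(dx_{i_r})$ is a basis of that fibre. By Nakayama's lemma, after shrinking $U$ further I may assume $q(dx_{i_1}),\dots,q(dx_{i_r})$ is an $\mathcal{O}_U$-basis of $\mathcal{F}^{\vee}|_U$. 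Setting $t_j := x_{i_j}$ and $f := (t_1,\dots,t_r)\colon U\to S:=\mathbb{A}^r_k$, the scheme $S$ is smooth over $k$, and $f^{*}\Omega_{S/k}\to\Omega_{X/k}$ carries the standard basis to $dt_1,\dots,dt_r = dx_{i_1},\dots,dx_{i_r}$, which are part of the chosen frame. Thus $f^{*}\Omega_{S/k}$ is a rank-$r$ subbundle of $\Omega_{X/k}|_U$ whose composite with $q$ is a surjection of locally free sheaves of equal rank, hence an isomorphism; equivalently $\Omega_{X/k}|_U = f^{*}\Omega_{S/k}\oplus\mathcal{F}^{\perp}$.

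Finally I would transport this splitting to the tangent side by taking annihilators. The relation $\Omega_{X/k}|_U = f^{*}\Omega_{S/k}\oplus\mathcal{F}^{\perp}$ dualizes to $T_{X/k}|_U = \ker(df)\oplus\mathcal{F}$, where $\ker(df) = (f^{*}\Omega_{S/k})^{\perp}$ and $\mathcal{F} = (\mathcal{F}^{\perp})^{\perp}$ by biduality of subbundles. This decomposition is precisely the assertion that the composite $\mathcal{F}|_U\hookrightarrow T_{X/k}|_U\xrightarrow{df} f^{*}T_{S/k}$ is an isomorphism, i.e. the identification $\mathcal{F}|_U\simeq f^{*}T_{S/k}$ induced by $f$. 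As a by-product $df$ is a split surjection of locally free sheaves, so it is surjective on every fibre; since $U$ and $S$ are both smooth over $k$, the Jacobian criterion (the smoothness criteria of \cite{GW} already invoked above) shows that $f$ is smooth and $\ker(df)=T_{U/S}$.

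The only point requiring genuine care is the passage from a fibrewise basis at $x$ to a frame over a whole neighborhood (the Nakayama and coherence step), together with the bookkeeping of the $\perp$-operations; the rest is formal. I would stress that \emph{no integrability hypothesis on $\mathcal{F}$ is used}: we only produce a submersion $f$ whose vertical foliation $\ker(df)$ — automatically involutive — is a linear complement to $\mathcal{F}$, rather than integrating $\mathcal{F}$ itself, so the Frobenius-type obstruction that would be fatal in characteristic $p$ never enters.
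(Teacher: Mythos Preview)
Your proof is correct and follows essentially the same route as the paper's: both arguments pick local coordinates $x_1,\dots,x_n$ by smoothness, select $r$ of them by a rank/Nakayama argument so that the corresponding coordinate projection $f\colon U\to\mathbb{A}^r_k$ makes $df|_{\mathcal F}$ an isomorphism, and conclude. The only cosmetic difference is that you carry out the selection on the cotangent side (choosing $dx_{i_j}$'s whose images form a basis of $\mathcal F^{\vee}$) whereas the paper works directly on the tangent side (adjusting a basis $e_1,\dots,e_r$ of $\mathcal F_x$ so that $\langle e_i,dx_j\rangle=\delta_{ij}$ for $i,j\le r$); these are dual formulations of the same linear-algebra step.
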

\begin{proof}
Since $X/k$ is smooth, we can choose local coordinates $x_1, \dots, x_n$ of $X$ around $x$ such that 
\[
   \Omega_{X/k, x} \;=\; \mathcal{O}_{X,x}\, \mathrm{d}x_1 \oplus \cdots \oplus \mathcal{O}_{X,x}\, \mathrm{d}x_n.
\]
As $T_{X/k}/\mathcal{F}$ is locally free, the short exact sequence 
\[
   0 \;\longrightarrow\; \mathcal{F} \;\longrightarrow\; T_{X/k} \;\longrightarrow\; T_{X/k}/\mathcal{F} \;\longrightarrow\; 0
\]
splits locally. In particular, $\mathcal{F}$ is locally free, and a basis $e_1, \dots, e_r$ of the $\mathcal{O}_{X,x}$-module $\mathcal{F}_x$ can be extended to a basis of $T_{X/k,x}$. After possibly reordering the coordinates $x_i$, we may choose $e_1, \dots, e_r$ so that for any $1 \leq i,j \leq r$, the natural pairing satisfies
\begin{equation}\label{eq:appendix eixj pairings}
   \langle e_i, \mathrm{d}x_j \rangle \;=\;
   \begin{cases}
   1, & i = j, \\
   0, & i \neq j.
   \end{cases}
\end{equation}

Let $U$ be an affine open neighborhood of $x$ on which \eqref{eq:appendix eixj pairings} holds. Consider the polynomial ring $k[t_1, \dots, t_r]$. The $k$-algebra homomorphism
\[
   k[t_1, \dots, t_r] \;\longrightarrow\; \mathcal{O}_X(U), 
   \qquad t_i \longmapsto x_i
\]
induces a $k$-morphism 
\[
   f \colon U \longrightarrow S \coloneqq \Spec\ k[t_1, \dots, t_r].
\]
It is straightforward to verify that this morphism $f$ satisfies the desired property.
\end{proof}

We can now extend Ekedahl's correspondence (Theorem \ref{Ekedahl's thm}) to the case where the base is regular.
\begin{theorem}\label{thm:appendix Ekedahl cor}
  Assume $k$ is an integral regular ring of characteristic $p$ and $X$ is a smooth $k$-variety. Then there exists a one-to-one correspondence between the set of $p$-closed smooth foliations on $X$ and the set of (isomorphism classes of) inseparable morphisms of $k$-varieties
\[
X \stackrel{\pi}{\longrightarrow} Y \stackrel{\pi'}{\longrightarrow} X'
\]
with $F_{X/k}=\pi'\circ \pi$ and $Y$ non-singular.   
\end{theorem}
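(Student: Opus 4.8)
The plan is to realize the correspondence through the two annihilator operations of Jacobson's inseparable Galois theory, using the nonlinear Cartier descent of Theorem \ref{thm:nonlinear Cartier descent in appendix} as the local engine in place of the residue-field arguments that are available only over a perfect field. To a $p$-closed smooth foliation $\sF \subset T_{X/k}$ I attach the subsheaf of algebras $\sO_Y := \Ann(\sF) \subset \sO_X$, giving a finite morphism $\pi\colon X \to Y$; to a factorization $X \xrightarrow{\pi} Y \xrightarrow{\pi'} X'$ I attach $\sF := \Ann(\sO_Y) = T_{X/Y}$. Since every $k$-derivation annihilates $p$-th powers, one has $\sO_{X'} = \Ann(T_{X/k}) \subseteq \Ann(\sF) = \sO_Y$ for any subsheaf $\sF$; this produces $\pi'\colon Y \to X'$ together with the identity $F_{X/k} = \pi'\circ\pi$ for free, and shows $\pi$ is purely inseparable. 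Thus the theorem reduces to three points: (a) if $\sF$ is $p$-closed and smooth, then $Y$ is non-singular and $\pi,\pi'$ have the expected degrees; (b) if $Y$ is non-singular, then $\sF = T_{X/Y}$ is a $p$-closed smooth foliation; (c) the two assignments are mutually inverse.

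For (a) the question is local on $X$, so by Lemma \ref{lemma:appendix local representation of foliation} I may present $\sF|_U \simeq f^*T_{S/k}$ for a smooth morphism $f\colon U \to S$ to a smooth $k$-scheme $S$ of dimension $\rk\sF$. Then $\sF|_U$ is a transversal foliation on $f$, and its $p$-closedness is exactly the vanishing of the $p$-curvature of the associated connection $\nabla$. Applying Theorem \ref{thm:nonlinear Cartier descent in appendix} yields an $S'$-scheme $U'$ with $U \cong U'\times_{S'} S$ and $\sO_{U'} = \sO_U^{\nabla} = \Ann(\sF)|_U = \sO_Y|_U$, so the local descents glue to the globally defined $Y$. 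Because $f$ is smooth, Remark \ref{remark on flatness in nonlinear Cartier descent} gives that $U' \to S'$ is smooth; as $S'$ is smooth over $k$ and $k$ is regular, $U'$ is regular, whence $Y$ is non-singular. Finally $\pi|_U$ is the base change of $F_{S/k}$, hence finite purely inseparable of degree $p^{\dim S} = p^{\rk\sF}$, and $\pi'$ has the complementary degree $p^{\dim X - \rk\sF}$.

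For (b) the assignment $\sF = T_{X/Y}$ is manifestly closed under the Lie bracket and under the $p$-th power map (a derivation killing $\sO_Y$ has all its iterates killing $\sO_Y$), so it is $p$-closed; the only substantive point is that $T_{X/k}/\sF$ is locally free. Here I use that $X$ and $Y$ are both smooth over the regular ring $k$ and that $\sO_X^{(p)} \subseteq \sO_Y \subseteq \sO_X$: by miracle flatness $\pi$ is finite flat, and locally $\sO_X$ is generated over $\sO_Y$ by elements $a_i$ with $a_i^p \in \sO_Y$, so that $\sO_X \cong \sO_Y[x_1,\dots,x_r]/(x_i^p - b_i)$ for a part of a $p$-basis adapted to the regular subring $\sO_Y$. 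From this normal form $\Omega_{X/Y}$ is free on the $dx_i$, and the conormal analysis shows $T_{X/k}/\sF$ is locally free, so $\sF$ is a smooth foliation. For (c), the identity $\Ann\circ\Ann(\sF) = \sF$ follows from (a) together with the local computation $T_{U/U'} = \nabla_{can}(f^*T_{S/k}) = \sF|_U$ read off from the splitting \eqref{eq:appendix splitting of tangent bundles}, while $\Ann\circ\Ann(\sO_Y) = \sO_Y$ follows by feeding the smooth foliation of (b) back into construction (a) and matching degrees.

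The main obstacle is the local-freeness of $T_{X/k}/T_{X/Y}$ in step (b): over a perfect field Ekedahl argues with purely inseparable field extensions and residue fields, but over a general regular $k$ one must instead control the purely inseparable height-one extension $\sO_Y \subseteq \sO_X$ of regular rings through adapted $p$-bases and descend this to the étale-local normal form above. Once this normal form is secured, the degree counts, the gluing of the local pieces of $Y$, and the two double-annihilator identities all become routine, and the construction visibly refines Ekedahl's Theorem \ref{Ekedahl's thm}, which is recovered when $k$ is a perfect field.
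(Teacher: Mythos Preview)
Your overall strategy coincides with the paper's: both directions are set up via the annihilator operations, and for the direction $\sF \mapsto Y$ you use Lemma \ref{lemma:appendix local representation of foliation} together with the nonlinear Cartier descent Theorem \ref{thm:nonlinear Cartier descent in appendix} exactly as the paper does, obtaining the Cartesian square over $F_{S/k}$ and reading off regularity of $Y$ and the identity $\Ann\circ\Ann(\sF)=\sF$ from the splitting \eqref{eq:appendix splitting of tangent bundles}.

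The gap is in your step (b). You assert that, after miracle flatness, $\sO_X$ admits the local normal form $\sO_Y[x_1,\dots,x_r]/(x_i^p-b_i)$ ``for a part of a $p$-basis adapted to the regular subring $\sO_Y$''. Flatness and the fact that every element of $\sO_X$ has $p$-th power in $\sO_Y$ do not by themselves yield such a presentation: the existence of a $p$-basis for the height-one extension $\sO_Y\hookrightarrow\sO_X$ of regular rings is precisely Kunz's conjecture, proved by Kimura--Niitsuma \cite{KN}, and the paper invokes this result explicitly. You correctly flag this step as ``the main obstacle'', but you neither name the theorem nor supply an argument, so as written the normal form is unjustified. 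Once Kimura--Niitsuma is in hand, the paper proceeds just as you do: the $p$-basis $c_1,\dots,c_n$ exhibits $X\to Y$ locally as the base change of $\Spec k[t_1,\dots,t_n]$ along its relative Frobenius, and then the splittings \eqref{eq:isomorphism of diffs}, \eqref{eq:appendix splitting of tangent bundles} give both the smoothness of $\Ann(\sO_Y)$ and the identity $\Ann\circ\Ann(\sO_Y)=\sO_Y$ directly, making your degree-matching argument in (c) unnecessary.
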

\begin{proof}
Since $X/k$ is smooth, the relative Frobenius $F_{X/k} \colon X \to X'$ is faithfully flat and induces a homeomorphism on the underlying topological spaces. Thus, we may regard $\mathcal{O}_{X'}$ as a subsheaf of $\mathcal{O}_X$. Given a $p$-closed smooth foliation $\mathcal{F}$ on $X$, we define $X \stackrel{\pi}{\longrightarrow} Y$ by
\[
\sO_Y=\Ann(\sF):=\{a\in \sO_X|\  Da=0, \ \textrm{for all}\ D\in \sF\}.
\]
It is clear that $\mathcal{O}_{X'} \subset \mathcal{O}_Y$, so there exists a morphism $\pi' \colon Y \to X'$ such that $F_{X/k} = \pi' \circ \pi$.  Conversely, given a chain of finite morphisms as above, we define
\[
\sF=\Ann(Y):=\{D\in T_{X/k} \mid Da=0 \ \text{for all}\ a\in \sO_Y\}.
\]
It then remains to verify the following claims:
\begin{compactenum}[\normalfont(i)]
\item Given $\mathcal{F}$, the corresponding $Y$ is smooth over $k$, and $\mathrm{Ann}(\mathrm{Ann}(\mathcal{F}))=\mathcal{F}$.
\item Given $X \stackrel{\pi}{\longrightarrow} Y \stackrel{\pi'}{\longrightarrow} X'$, we have $\mathrm{Ann}(\mathrm{Ann}(\mathcal{O}_Y))=\mathcal{O}_Y$.
\end{compactenum}

First, note that these claims can be verified locally. For (i), given $\mathcal{F}$, by Lemma \ref{lemma:appendix local representation of foliation}, we may assume there exists a $k$-morphism $f \colon X \to S$ to a smooth $k$-scheme $S$, such that $f$ induces an isomorphism $\mathcal{F} \simeq f^*T_{S/k}$. This isomorphism yields an $\mathcal{O}_X$-linear homomorphism
\[\nabla \colon f^*T_{S/k} \stackrel{\sim}{\longrightarrow} \mathcal{F}\hookrightarrow T_{X/k}.\]
By construction, $\nabla$ is a transversal foliation of $f$ with vanishing $p$-curvature. We may then apply Theorem \ref{thm:nonlinear Cartier descent in appendix} to obtain the following Cartesian diagram:
\[\begin{tikzcd}
    X && {X_1} \\
    & \square \\
    S && {S'}
    \arrow["{\pi_1}", from=1-1, to=1-3]
    \arrow["f"', from=1-1, to=3-1]
    \arrow[from=1-3, to=3-3]
    \arrow["{F_{S/k}}", from=3-1, to=3-3]
\end{tikzcd}\]
Moreover, $\nabla$ coincides with $\nabla_{can}$ induced by this Cartesian structure. From the splittings \eqref{eq:isomorphism of diffs} and \eqref{eq:appendix splitting of tangent bundles}, we deduce that $\mathcal{O}_Y \coloneqq \mathrm{Ann}(\mathcal{F})=\mathcal{O}_{X_1}$ as subsheaves of $\mathcal{O}_X$, and furthermore that $\mathrm{Ann}(\mathcal{O}_{X_1})=\operatorname{Im}\nabla_{can}=\mathcal{F}$. Hence we may identify $X \stackrel{\pi_1}{\rightarrow} X_1$ with $X \stackrel{\pi}{\rightarrow} Y$, and conclude that $\mathrm{Ann}(\mathrm{Ann}(\mathcal{F}))=\mathcal{F}$. Note that the sequence
\[0 \rightarrow f^*\Omega_{S/k} \rightarrow \Omega_{X/k} \rightarrow \Omega_{X/S} \rightarrow 0\]
is a split short exact sequence. By the smoothness of $X/k$, it follows that $X/S$ is also smooth. Since $F_{S/k}$ is faithfully flat, the smoothness of $X/S$ implies the smoothness of $X'/S'$, and therefore $X'/k$ is smooth as well. This completes the proof of (i).

Next, consider $X \stackrel{\pi}{\longrightarrow} Y \stackrel{\pi'}{\longrightarrow} X'$ with $Y$ regular. Without loss of generality, we may assume that $X=\operatorname{Spec} C$ and $Y=\operatorname{Spec} C'$ are affine. By Kunz’s conjecture, proved in \cite{KN}, the extension $C' \hookrightarrow C$ admits a $p$-basis \( c_1, \dots, c_n \in C \); that is, \( C \) is a free \( C' \)-module with basis
\[
\left\{ c_1^{i_1} \cdots c_n^{i_n} \,\middle|\, 0 \leq i_\alpha < p \text{ for all } 1 \leq \alpha\leq n \right\}.
\]
With this in place, we can construct the following Cartesian diagram:

\begin{equation}\notag
\begin{tikzcd}
    C && {C^{\prime}} \\
    & \square \\
    {B} && {B^{(p)}}
    \arrow[from=1-3, to=1-1]
    \arrow["\psi", from=3-1, to=1-1]
    \arrow["{\psi^{\prime}}", from=3-3, to=1-3]
    \arrow["{t_i^p \mapsfrom t_i}"', from=3-3, to=3-1]
\end{tikzcd}
\end{equation}
Here, \( B^{(p)} = B = k[t_1, \dots, t_n] \) is the polynomial algebra, and the morphism \( \psi' \) is the \( k \)-algebra homomorphism defined by \( t_i \mapsto c_i^p \), while \( \psi \) is the \( k \)-algebra homomorphism given by \( t_i \mapsto c_i \). Let $S=\operatorname{Spec} B$, we obtain the following Cartesian diagram:
\[\begin{tikzcd}
    X && {Y} \\
    & \square \\
    S && {S'}
    \arrow["{\pi}", from=1-1, to=1-3]
    \arrow[from=1-1, to=3-1]
    \arrow[from=1-3, to=3-3]
    \arrow["{F_{S/k}}", from=3-1, to=3-3]
\end{tikzcd}\]
Then, as in the proof of (i), it follows from the splittings \eqref{eq:isomorphism of diffs} and \eqref{eq:appendix splitting of tangent bundles} that $\mathrm{Ann}(\mathrm{Ann}(\mathcal{O}_Y ))=\mathcal{O}_Y$. This completes the proof of claim (ii).

\end{proof}

M.S., Yau Mathematical Science Center, Tsinghua University, Beijing, 100084, China \& Yanqi Lake Beijing Institute of Mathematical Sciences and Applications, Beijing, 101408, China\\
\noindent\small{Email: \texttt{msheng@tsinghua.edu.cn}}

\noindent  J.X., School of Mathematical Sciences, University of Science and Technology of China, Hefei, 230026, China\\
\noindent\small{Email: \texttt{xujx02@ustc.edu.cn}}

\end{document}